\documentclass[12pt,a4paper,final,reqno]{amsart}%{report}

%%%%%%%%%%%%%%%%%%%%%%%%%%%%%%%%%%%%%%%%%%%%%%%%%%%%%%%%%%%%%%%%%%

\newcommand{\includefigure}[2]{\includegraphics[#1]{#2}}

%%%%%%%%%%%%%%%%%%%%%%%%%%%%%%%%%%%%%%%%%%%%%%%%%%%%%%%%%%%%%%%%%%

\usepackage{amsmath}
\usepackage{amsthm}
\usepackage{amssymb}
\usepackage{graphicx}
\usepackage{mathrsfs}
\usepackage[english]{babel}
\usepackage[latin1]{inputenc}
\usepackage[T1]{fontenc}
\usepackage{amsrefs}
\usepackage[usenames,dvipsnames]{xcolor}
\usepackage{enumitem}
\usepackage{xcolor}
\usepackage{bbm}
\usepackage{multirow}

%%%%%%%%%%%%%%%%%%%%%%%%%%%%%%%%%%%%%%%%%%%%%%%%%%%%%

\numberwithin{equation}{section}

%%%%%%%%%%%%%%%%%%%%%%%%%%%%%%%%%%%%%%%%%%%%%%%%%%%%%
\newtheorem{theorem}{Theorem}[section]
\newtheorem{proposition}[theorem]{Proposition}

\newtheorem{corollary}[theorem]{Corollary}
\newtheorem{lemma}[theorem]{Lemma}

{\theoremstyle{definition}%{plain}
\newtheorem{remark}[theorem]{Remark}

\newtheorem{defn}[theorem]{Definition}
}

\newtheorem{Rule}{Rule}
\newtheorem{PolyRule}{Rule}

%%%%%%%%%%%%%%%%%%%%%%%%%%%%%%%%%%%%%%%%%%%%%%%%%%%%

%\newcommand{\GG}{{\cal G}}
\newcommand{\Gcal}{\mathcal{G}}

\newcommand{\Nn}{{\mathbb{N}}}

\newcommand{\Rr}{{\mathbb{R}}}

%%%%%%%%%%%%%%%%%%%%%%%%%%%%%%%%%%%%%%%%%%%%%

\def\diag{\operatorname{diag}}

%%%%% Pedro's Macro

%%%%%%%%%%%%%%%%Hassan%%%%%%%%%%

 % inserido pelo telmo
\newcommand{\Mat}{{\rm Mat}}

\newtheorem*{theorem*}{Theorem}
%%%%%%%%%%%%%%%%%%%%%%%%%%%%%%%%%%%%%%%%%%%%%%%%%%%%%%%%%%%%%%%%%%%%%

\newcommand{\abs}[1]{\left| #1\right|}

 %{\mbox{\rm Homeo\,}}
  %{\mbox{\rm Diff\,}}
 % {\XX}

\newcommand{\comment}[1]{}

\newcommand{\Bscr}{\mathscr{B}}

\newcommand{\Escr}{\mathscr{E}}

\newcommand{\Fscr}{\mathscr{F}}

\newcommand{\Rscr}{\mathscr{R}}

\newcommand{\Vscr}{\mathscr{V}}

\newcommand{\nund}{{\underline{n}}}
\newcommand{\mund}{{\underline{m}}}
\newcommand{\rund}{{\underline{r}}}

\newcommand{\inter}{{\rm int}}

\newcommand{\bbEdge}{\bullet\!\!-\!\!\bullet}
\newcommand{\blV}{\bullet}

\newcommand{\Nuc}{{\rm Ker}}
\newcommand{\rank}{{\rm rank}}
\newcommand{\attr}[1]{\Lambda_{{#1}}}

\newcommand{\xred}[1]{\check{x}^{{#1}}}

%%%%%%%%%%%%%%%%%%%%%%%%%%%%%%%%%%%%%%%%%%%%%%%%%%%%%
%%%%%%%%%%%%%%%%%%%%%%%%%%%%%%%%%%%%%%%%%%%%%%%%%%%%%
\begin{document}
\selectlanguage{english}
\title[Dissipative Polymatrix Replicators]{Conservative and Dissipative Polymatrix Replicators}
%\date{May 2014}
\subjclass[2010]{91A22, 37B25, 70G45}
\keywords{evolutionary game theory, replicator equation, stably dissipative, Hamiltonian systems, Lotka-Volterra systems}

\author[Alishah]{Hassan Najafi Alishah}
\address{ Departamento de Matem\'atica, Instituto de Ci\^encias Exatas\\
Universidade Federal de Minas Gerais \\
31123-970 Belo Horizonte\\
MG - Brazil }
\email{halishah@mat.ufmg.br}

\author[Duarte]{Pedro Duarte}
\address{Departamento de Matem\'atica and CMAF \\
Faculdade de Ci\^encias\\
Universidade de Lisboa\\
Campo Grande, Edificio C6, Piso 2\\
1749-016 Lisboa, Portugal 
}
\email{pmduarte@fc.ul.pt}

\author[Peixe]{ Telmo Peixe}
\address{Departamento de Matem\'atica and CMAF \\
Faculdade de Ci\^encias\\
Universidade de Lisboa\\
Campo Grande, Edificio C6, Piso 2\\
1749-016 Lisboa, Portugal 
}
\email{tjpeixe@fc.ul.pt}

\begin{abstract}
In this paper we address a class of replicator dynamics, referred as polymatrix replicators,
that contains well known classes of evolutionary game dynamics, such as
the symmetric and asymmetric (or bimatrix) replicator equations, and some   replicator equations for $n$-person games. Polymatrix replicators form a simple class of
algebraic o.d.e.'s on prisms (products of simplexes), which describe the evolution of strategical behaviours within a po\-pu\-lation stratified in  $n\geq 1$ social groups. 

In the 80's Raymond Redheffer et al. developed a theory on the  class of stably dissipative Lotka-Volterra systems.
This theory is built around a reduction algorithm that ``infers'' the localization of the system' s attractor in some affine subspace.
It was later proven that the dynamics on the attractor of such systems is always embeddable in a Hamiltonian Lotka-Volterra system.  

In this paper we  extend these results to polymatrix replicators.
\end{abstract}

\maketitle
%\tableofcontents
%\addtocontents{toc}{\protect\setcounter{tocdepth}{1}}
%to not appear subsection titles in the table of contents

%%%%%%%%%%%%%%%%%%%%%%%%%%%  Introduction  %%%%%%%%%%%%%%%%%%%%%%%%%%%%%%%%%%%%%%%%%%%%%%%%%

\section{Introduction}
\label{intro}
%% LV systems, the origin
Lotka-Volterra (LV) systems were  introduced independently by 
Alfred Lotka~\cite{Lot1958} and Vito Volterra \cite{Volt1990} to model the evolution of biological and chemical ecosystems. The phase space of a 
Lotka-Volterra system is the non-compact polytope
$\Rr^n_+=\{x\in\Rr^n: x_i\geq0,\; i=1,\ldots,n\}$, where a point in $\Rr^n$ represents a state of the ecosystem.
The   LV systems are defined by the following o.d.e. 
$$ \frac{dx_i}{dt} = x_i\, f_i(x), \quad  i=1,\ldots , n , $$
where usually  the so called {\em fitness functions} $f_i(x)$ are considered to be affine, i.e., of the form
$$ f_i(x)= r_i + \sum_{j=1}^n a_{ij}\,x_j , $$
where  $A=(a_{ij})\in \Mat_{n\times n}(\Rr)$ is called the system's interaction matrix.

%% Complexity of LV systems refer to Smale and cooperative systems 
In general, the dynamics of LV systems can be arbitrarily rich, 
as was first observed by S. Smale~\cite{Sma1976} who proved that any finite dimensional compact flow can be embedded in a LV system with non linear fitness functions. Later, using a class of embeddings studied by L. Brenig~\cite{Brenig1988}, B. Hern{\'a}ndez-Bermejo and V. Fair{\'e}n ~\cite{BF1997}, it was  proven (see~\cite[Theorems 1 and 2]{BF1997}) that any LV system with polynomial fitness functions can be  embedded in a LV system with affine fitness functions. 
Combining this  with Smale's result, we infer that any finite dimensional compact  flow can be, up to a small perturbation, embedded in a LV system with affine fitness functions.  These facts emphasize the difficulty of studying the  general dynamics of LV systems.

In spite of  these difficulties, many dynamical consequences have been driven from information on the fitness data $f_i(x)$  for some special classes of LV systems.
%% cooperative systems 
Two such classes are the {\em cooperative} and {\em competitive} 
LV systems, corresponding to fitness functions satisfying $\frac{\partial f_i}{\partial x_j}\geq 0$ and  $\frac{\partial f_i}{\partial x_j}\leq 0$, respectively, for all $i,j$. Curiously, the fact that
Smale's embedding takes place in a competitive LV system influenced the development of the theory of cooperative and  competitive LV systems initiated by M. Hirsch~\cites{Hir1982,Hir1985,Hir1988}.

%% Dissipative and conservative systems by Volterra
In his pioneering work Volterra~\cite{Volt1990} studies dissipative LV systems as  generalizations of the classical predator-prey model. 
A LV system with interaction matrix $A=(a_{ij})$ is called {\em dissipative}, resp. {\em conservative}, if there are constants $d_i>0$ such that
the quadratic form $Q(x)=\sum_{i,j=1}^n a_{ij} d_j x_i x_j$ is negative semi-definite, resp. zero.
Note that the meaning of the term {\em dissipative} is not strict
because dissipative LV o.d.e.s include conservative LV systems.
In addition we remark that conservative LV models are in some sense Hamiltonian systems, a fact that was well known and  explored by Volterra.

%% theory of stably admissible systems by Redheffer et al describe the theory
Given a LV system with interaction matrix $A=(a_{ij})$,
we define its interaction graph $G(A)$ to be the undirected graph with vertex set $V=\{1,\ldots, n\}$ that includes an edge
connecting $i$ to $j$ whenever  $a_{ij}\neq 0$ or $a_{ji}\neq 0$.
The LV system and its matrix $A$ are called {\em stably dissipative} if $\sum_{i,j=1}^n \tilde a_{ij} x_i x_j\leq 0$ for all $x\in\Rr^n$ and every small enough perturbation
$\tilde A=(\tilde a_{ij})$ of $A$ such that $G({\tilde{A}})=G(A)$.
This notion of stably dissipativeness is due to Redheffer \textit{et al.} whom in a series of papers \cites{RZ1981,RZ1982,RW1984,Red1985,Red1989} studied 
this class of models under the name of {\em stably admissible} systems.

Assuming the system admits an interior equilibrium $q\in {\rm int}(\Rr_+^n)$,
Redheffer \textit{et al.} describe a simple reduction algorithm, running on the graph $G(A)$,
that `deduces' the minimal affine subspace of the form $\cap_{i\in I} \{ x\in\Rr^n_+\,:\, x_i=q_i\}$ that contains the attractor of every stably dissipative LV system with interaction graph $G(A)$.

% DFO theorem
Under the scope of this theory, Oliva \textit{et al.}~\cite{DFO1998}  
have proven that the dynamics on the attractor of a stably dissipative LV system is always described by a conservative (Hamiltonian) LV system.

%% Replicator and Bimatrix replicator systems
 The replicator equation, which is now central to Evolutionary Game Theory (EGT), was  introduced by  P. Taylor and L. Jonker~\cite{TJ1978}. It models the time evolution of the probability distribution of strategical behaviors within a biological population.
Given a {\em payoff matrix} $A\in \Mat_{n\times n}(\Rr)$, the replicator equation refers to the following o.d.e.
$$  x_i' = x_i\, 
\left( (A x)_i - x^t\, A\, x \right),\quad i=1,\ldots, n   $$
on the simplex
$\Delta^{n-1}=\{x\in\Rr_+^n: \sum_{j=1}^n x_j= 1 \}$. 
This equation says that the logarithmic growth of the usage frequency of each behavioural strategy is directly proportional to how well that strategy fares within the population.

%% Bimatrix replicator
Another important class of models in EGT,  that includes the {\em Battle of sexes}, is the {\em bimatrix replicator} equation.
In this model the population is divided in two groups, 
e.g. males and females, and all interactions involve individuals of different groups.
Given two payoff matrices
$A\in \Mat_{n\times m}(\Rr)$ and
$B\in \Mat_{m\times n}(\Rr)$, for the strategies in each group, the bimatrix replicator refers to the   o.d.e. 
$$ \left\{
\begin{array}{lll}
x_i' &= x_i\, 
\left( (A y)_i - x^t\, A\, y \right) & i=1,\ldots, n\\
y_j'  &= y_j\, 
\left( (B x)_j - y^t\, B\, x \right) & j=1,\ldots, m
\end{array}
\right.  $$
on the product of simplices $\Delta^{n-1}\times\Delta^{m-1}$. 
It describes the time evolution of the strategy usage frequencies  in each  group. These systems were first studied in ~\cite{SS1981} and ~\cite{SSHW1981}.

%% Polymatrix replicator systems 
We now introduce the {\em polymatrix replicator} equation studied in ~\cite{AD2015}.
Consider a population is divided in $p\in\Nn$ groups,
$\alpha=1,\ldots, p$, each  with $n_\alpha\in\Nn$  behavioral strategies, in a total of $n=\sum_{\alpha=1}^p n_\alpha$ strategies, numbered from $1$ to $n$. The system is described by a single payoff matrix $A\in\Mat_{n\times n}(\Rr)$, which can be decomposed in $p^2$ blocks
$A^{\alpha, \beta}
\in \Mat_{n_\alpha\times n_\beta}(\Rr)$ with the payoffs corresponding to interactions between strategies in group $\alpha$ with strategies in group $\beta$.
Let us abusively write $i\in\alpha$ to express that $i$ is a strategy of the group $\alpha$. With this notation the polymatrix replicator refers to the following o.d.e.
$$ x_i' = x_i\,\left( (A\,x)_i - \sum_{j\in\alpha} x_j\,(A\,x)_j \right), \, i\in \alpha, \alpha\in \{1,\ldots, p\}. $$
on the product  of simplexes
$\Delta^{n_1-1}\times \ldots \times \Delta^{n_p-1}$.
Notice that interactions between individuals of any two groups (including the same)  are allowed.
Notice also that this equation implies that
 competition takes place inside the groups, i.e., the relative success of each strategy is evaluated within the corresponding group.

This class of evolutionary systems includes both the replicator equation (when $p=1$) and the bimatrix replicator equation (when $p=2$ and  $A^{1,1}=0=A^{2,2}$). It  also includes the replicator equation for $n$-person games (when $A^{\alpha,\alpha}=0$ for all $\alpha=1,\ldots, p$). This last subclass of polymatrix replicator equations 
specializes more general replicator equations for $n$-person games with multi-linear payoffs that were first formulated by  Palm~\cite{Palm} and studied by Ritzberger, Weibull~\cite{KRJW}, Plank~\cite{Plank3} among others.

%% describe the main results
In this paper we define the class of admissible polymatrix replicators (the analogue of stably dissipative for LV systems), and introduce a reduction algorithm  similar to the one of Redheffer  that `deduces' the   constraints on the localization of the attractor. We also generalize the mentioned theorem in~\cite{DFO1998} to polymatrix replicator systems.

%% Paper Structure
This paper is organized as follows.
In Section~\ref{poly} we introduce the notion of polymatrix game as well as its associated polymatrix replicator system (o.d.e.), proving some elementary facts about this class of models.
In Section~\ref{lvs} we recall some known results of Redheffer {\it et al.} reduction theory for stably dissipative LV systems.
In sections~\ref{hamiltonian} and~\ref{dissipative}, we
define, respectively, the classes of conservative and dissipative polymatrix replicators,
and study their properties.
In particular, we extend to polymatrix replicators the concept
of stably dissipativeness of
Redheffer {\em et al.}.
We generalize to this context the mentioned theorem in~\cite{DFO1998}
about the Hamiltonian nature of the limit dynamics of a ``stably dissipative'' system.
Finally, in Section~\ref{applications}  we illustrate our results with a simple example.

%%%%%%%%%%%%%%%%%%%%%%%%%%%%%%%%%%%%%%%%%%%%%%%%%%%%%%%%%%%%%%%%%%%%%%%%%%%%%%%%%%%%
%%%%%%%%%%%%%%%%%%%%%%%%%%%%%%%%%%%%%%%%%%%%%%%%%%%%%%%%%%%%%%%%%%%%%%%%%%%%%%%%%%%%

\section{Polymatrix Replicators}
\label{poly}

\begin{defn}
A polymatrix game is an ordered  pair $(\nund,A)$ where 
$\nund=(n_1,\ldots, n_p)$ is a list of positive integers, called the game type, and $A\in \Mat_{n\times n}(\Rr)$ a square matrix of dimension  $n=n_1+\ldots + n_p$.
\end{defn}

This formal definition has the following interpretation.

Consider a population divided in $p$ groups, labeled by an integer
$\alpha$ ranging from $1$ to $p$. Individuals of each group 
$\alpha=1,\ldots, p$ have exactly $n_\alpha$ strategies to interact with
other members of the population. The strategies of a group $\alpha$ are labeled by positive integers $j$ in the range 
$$ n_1+\ldots + n_{\alpha-1} <j \leq n_1+\ldots + n_{\alpha}\;.$$
We will write $j\in \alpha$ to mean that $j$ is a strategy of the group $\alpha$.
Hence the strategies of all groups are labeled by the  integers $j=1,\ldots, n$.

The matrix $A$ is the {\em payoff matrix}.
Given strategies $i\in\alpha$ and $j\in\beta$, in the groups $\alpha$ and $\beta$ respectively, the entry $a_{i j}$
represents an average payoff for an individual using the first strategy in some interaction with an individual using the second. Thus, the payoff matrix $A$ can be decomposed
into $n_\alpha\times n_\beta$ block matrices  $A^{\alpha,\beta}$, with entries $a_{ij}$, $i\in\alpha$ and $j\in \beta$,
where $\alpha$ and $\beta$ range from $1$ to $p$.

\begin{defn}\label{equivalent matrices defn}
Two polymatrix games $(\nund,A)$ and $(\nund,B)$ with the same type are said to be equivalent, and we write $(\nund,A)\sim (\nund,B)$, when
for $\alpha,\beta=1,\ldots, p$, all the rows of the block matrix  $A^{\alpha \beta}-B^{\alpha \beta}$ are equal.
\end{defn}

The {\em state} of the population is described by a point $x=(x^ \alpha)_\alpha$ in the {\em prism}
$$ \Gamma_{\nund} :=\Delta^{n_1-1}\times \ldots \times \Delta^{n_p-1}\subset \Rr^n \;, $$
where $\Delta^{n_\alpha-1}=\{x\in\Rr^{n_\alpha}: \sum\limits_{i=1}^{n_\alpha}x_i=1\}$,
$x^\alpha=(x_j)_{j\in\alpha}$ 
and the entry $x_j$ represents the  usage frequency of strategy $j$ within the group $\alpha$.
The prism  $\Gamma_{\nund}$ is a $(n-p)$-dimensional simple polytope whose affine support
is the $(n-p)$-dimensional space $E^ {n-p}\subset\Rr^ n$ defined by the $p$ equations
$$ \sum_{i\in\alpha} x_i  = 1,\quad 1\leq \alpha\leq p\;.$$

\begin{defn}
A polymatrix game $(\nund,A)$ determines the following o.d.e.
on the prism $\Gamma_{\nund}$
\begin{equation}\label{ode:pmg}
\frac{ d x_i}{dt} = x_i\,\left( (A\,x)_i - \sum_{\beta=1}^p (x^{\alpha})^ T A^{\alpha,\beta} x^{\beta} \right), \quad \forall\; i\in \alpha,\; 1\leq \alpha\leq  p \;,
\end{equation}
called a polymatrix replicator system.
\end{defn}

This equation says that the logarithmic growth rate of each frequency $x_i$ is the difference between its
payoff $(A x)_i=\sum_{j=1}^n a_{ij} x_j$  and the average payoff of all strategies in the group $\alpha$.
The flow $\phi_{\nund,A}^t$ of this equation leaves the prism $\Gamma_{\nund}$ invariant.
Hence, by compactness of $\Gamma_{\nund}$, this flow is complete.
The underlying vector field on $\Gamma_{\nund}$ will be denoted by $X_{A,\nund}$.

In the case $p=1$, we have $\Gamma_{\nund}=\Delta^{n-1}$ and 
~\eqref{ode:pmg} is the usual replicator equation associated to the payoff matrix $A$.

When $p=2$, and  $A^{11}=A^{22}=0$,  
$\Gamma_{\nund} =\Delta^ {n_1-1}\times \Delta^{n_2-1}$  and ~\eqref{ode:pmg} becomes 
the  bimatrix replicator equation associated to the pair of payoff matrices $(A^{12},A^ {21})$.

The polytope $\Gamma_{\nund}$
is parallel to the affine subspace
\begin{equation}\label{def Hnund}
H_{\nund}:=\left\{\, x\in\Rr^n\,\colon\,
\sum_{j\in\alpha} x_j=0,\; \text{ for }\; \alpha=1,\ldots, p\, \right\}\;.  
\end{equation}

\newcommand{\one}{{\mathbbm{1}}}

For each $\alpha=1,\ldots, p$, we denote by $\pi_\alpha:\Rr^n\to \Rr^n$ the projection
$$ x \mapsto y,\quad y_i:= \left\{
\begin{array}{cll}
x_i &\text{ if } & i\in \alpha \\
0 &\text{ if } & i\notin \alpha \\
\end{array}\right. \;. $$
We also define $\one:= (1,\dots ,1)\in\Rr^n\,.$

\begin{lemma} \label{lemma equivalence}
Given a matrix $C\in\Mat_{n\times n}(\Rr)$, the following statements are equivalent:
\begin{enumerate}
\item[(a)] $C^{\alpha \beta}$ has equal rows, for all 
$\alpha,\beta\in\{1,\ldots, p\}$,
\item[(b)] $C x\in H_\nund^\perp$, for all $x\in\Rr^n$.
\end{enumerate}
Moreover, if any of these conditions holds then
$X_{\nund,C}=0$ on $\Gamma_\nund$.
\end{lemma}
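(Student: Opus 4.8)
The plan is to first settle the equivalence (a) $\Leftrightarrow$ (b) by describing $H_\nund^\perp$ concretely, and then to read off the vanishing of $X_{\nund,C}$ from a one-line simplification of \eqref{ode:pmg} on the prism.

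First I would record that $H_\nund^\perp$ is the linear span of the vectors $\pi_1(\one),\ldots,\pi_p(\one)$. Indeed, by \eqref{def Hnund} the subspace $H_\nund$ is the common kernel of the $p$ linear functionals $x\mapsto \sum_{j\in\alpha} x_j=\langle \pi_\alpha(\one),x\rangle$, and these are linearly independent because their coefficient vectors $\pi_\alpha(\one)$ have pairwise disjoint (nonempty) supports; hence the $p$-dimensional space $H_\nund^\perp$ is spanned by them. The upshot is the reformulation of (b) that I will actually use: $y\in H_\nund^\perp$ if and only if $y$ is constant on each group, i.e. $y_i=y_{i'}$ whenever $i,i'$ belong to the same group.

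Next, for (a) $\Rightarrow$ (b): for $i\in\alpha$ one has $(Cx)_i=\sum_{\beta=1}^p (C^{\alpha\beta}x^\beta)_i$, and if every block $C^{\alpha\beta}$ has all rows equal, then $(C^{\alpha\beta}x^\beta)_i$ is independent of the choice of $i\in\alpha$, so $(Cx)_i$ depends only on the group of $i$; by the reformulation above, $Cx\in H_\nund^\perp$ for every $x$. For (b) $\Rightarrow$ (a): apply the hypothesis to a standard basis vector $e_j$ with $j\in\beta$. Then $Ce_j$ is the $j$-th column of $C$, and its restriction to the rows indexed by a group $\alpha$ is the $j$-th column of the block $C^{\alpha\beta}$; since $Ce_j\in H_\nund^\perp$ this column is constant. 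Letting $j$ range over $\beta$ shows that every column of $C^{\alpha\beta}$ is a constant vector, which is exactly the statement that all rows of $C^{\alpha\beta}$ coincide. This gives (a).

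Finally, for the ``moreover'' clause I would simplify the subtracted term in \eqref{ode:pmg}: using $(Cx)_k=\sum_{\beta}(C^{\alpha\beta}x^\beta)_k$ for $k\in\alpha$,
\[
\sum_{\beta=1}^p (x^\alpha)^T C^{\alpha\beta}x^\beta=\sum_{k\in\alpha} x_k\,(Cx)_k,
\]
so the $i$-th component of $X_{\nund,C}$ at $x$ equals $x_i\bigl((Cx)_i-\sum_{k\in\alpha}x_k(Cx)_k\bigr)$ for $i\in\alpha$. Assuming (a), write $c_\alpha(x)$ for the common value of $(Cx)_i$ over $i\in\alpha$; on $\Gamma_\nund$ we have $\sum_{k\in\alpha}x_k=1$, hence $\sum_{k\in\alpha}x_k(Cx)_k=c_\alpha(x)=(Cx)_i$ and the bracket vanishes identically, so $X_{\nund,C}\equiv 0$ on $\Gamma_\nund$. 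I do not expect a real obstacle here: everything is linear algebra organized around the block decomposition of $C$, and the only point worth stating carefully is the elementary equivalence ``all rows of a matrix are equal'' $\Leftrightarrow$ ``every column of that matrix is a constant vector''.
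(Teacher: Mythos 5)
Your proposal is correct and follows essentially the same route as the paper's proof: identifying $H_\nund^\perp$ as the span of the $\pi_\alpha(\one)$, testing (b) on standard basis vectors for the converse, and observing that on $\Gamma_\nund$ the group-average of a group-constant payoff equals that constant. You merely spell out the dimension count for $H_\nund^\perp$ and the final cancellation in \eqref{ode:pmg} in slightly more detail than the paper does.
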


\begin{proof}
Assume (a).
Since $H_\nund^\perp$ is spanned by the vectors
$\pi_\alpha(\one)$ with $\alpha=1,\ldots, p$,  
we have $v\in H_\nund^\perp$ iff $v_i=v_j$ for all $i,j\in\alpha$.
Because all rows of $C$  in the  group $\alpha$ are equal, we have $(C x)_i=(C x)_j$ for all $i,j\in\alpha$.
Hence item (b) follows.

Next assume (b).
For all $i\in\alpha$, with $\alpha\in\{1,\ldots, p\}$,
$C e_i\in H_\nund^\perp$, which implies that
$c_{i,k}=c_{j,k}$ for all $j\in\alpha$. This proves (a).

If  (a) holds, then for any $\alpha\in\{1,\dots,p\}$,  $i,j\in\alpha$ and $k=1,\ldots, n$,
we have   $c_{ik}=c_{jk}$. Hence for any $x\in\Gamma_{\nund}$,  and $i,j\in\alpha$ with $\alpha\in\{1,\ldots, p\}$,
$(C\,x)_i=(C\, x)_j$, which implies that
$X_{\nund,C}=0$ on $\Gamma_\nund$.
\end{proof}

\begin{proposition}\label{equivalent matrices prop}
Given two polymatrix games  $(\nund,A)$ and  $(\nund,B)$ with the same type $\nund$, if $(\nund,A)\sim (\nund,B)$ then 
  $X_{\nund,A}=X_{\nund,B}$ on $\Gamma_{\nund}$.

\end{proposition}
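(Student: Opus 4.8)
The plan is to reduce the statement to Lemma~\ref{lemma equivalence} by exploiting the fact that the polymatrix replicator vector field depends linearly on the payoff matrix. First I would set $C := A - B \in \Mat_{n\times n}(\Rr)$. Since $(\nund,A)\sim(\nund,B)$, Definition~\ref{equivalent matrices defn} says that for every $\alpha,\beta\in\{1,\ldots,p\}$ all rows of the block $C^{\alpha\beta} = A^{\alpha\beta} - B^{\alpha\beta}$ are equal; that is, $C$ satisfies condition~(a) of Lemma~\ref{lemma equivalence}.

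Next I would record the linearity bookkeeping. Looking at the explicit formula~\eqref{ode:pmg}, the $i$-th component of $X_{\nund,A}$ at a point $x$ equals $x_i\,( (Ax)_i - \sum_{\beta=1}^p (x^\alpha)^T A^{\alpha\beta} x^\beta )$ for $i\in\alpha$, and for fixed $x$ both the map $A\mapsto (Ax)_i$ and the map $A\mapsto \sum_{\beta=1}^p (x^\alpha)^T A^{\alpha\beta} x^\beta = \sum_{j\in\alpha} x_j (Ax)_j$ are linear in the entries of $A$. Hence $X_{\nund,A} - X_{\nund,B}$ is again a polymatrix replicator field, namely the one associated with the difference matrix: $X_{\nund,A} - X_{\nund,B} = X_{\nund,\,A-B} = X_{\nund,C}$ as vector fields on $\Rr^n$, in particular on $\Gamma_\nund$.

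Finally, the ``moreover'' clause of Lemma~\ref{lemma equivalence} applied to $C$ gives $X_{\nund,C} = 0$ on $\Gamma_\nund$, and therefore $X_{\nund,A} = X_{\nund,B}$ on $\Gamma_\nund$, which is exactly the assertion of the proposition. I do not expect any real obstacle here; the only point deserving a line of care is the verification that $X_{\nund,A} - X_{\nund,B}$ coincides with $X_{\nund,\,A-B}$, which is immediate from the term-by-term linearity of~\eqref{ode:pmg} in the matrix entries.
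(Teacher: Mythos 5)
Your proposal is correct and follows exactly the paper's own argument, which simply invokes Lemma~\ref{lemma equivalence} together with the linearity of $A\mapsto X_{\nund,A}$; you have merely spelled out the linearity bookkeeping that the paper leaves implicit. No issues.
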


\begin{proof}
Follows from Lemma~\ref{lemma equivalence} and the linearity of the correspondence
$A\mapsto X_{\nund,A}$.
\end{proof}

We have the following obvious characterization of interior equilibria.

\begin{proposition}
\label{prop int equilibria}
Given a polymatrix game  $(\nund,A)$, a point
$q\in {\rm int}(\Gamma_\nund)$ is an equilibrium of $X_{\nund,A}$ \, if and only if \,
$(A\,q)_i=(A\,q)_j$ for all $i,j\in\alpha$ and 
$\alpha=1,\ldots, p$.

In particular the set of interior equilibria of $X_{\nund,A}$ is the intersection of some affine subspace with ${\rm int}(\Gamma_\nund)$.
\end{proposition}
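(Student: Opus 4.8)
The plan is to unwind the definition of the vector field $X_{\nund,A}$ and to notice that the ``average payoff'' term appearing in~\eqref{ode:pmg} simplifies at points of the prism. First I would record the elementary identity, for $i\in\alpha$,
$$ \sum_{\beta=1}^p (x^{\alpha})^T A^{\alpha,\beta}x^{\beta} \;=\; \sum_{j\in\alpha} x_j\,(A\,x)_j \;=:\; \bar f_\alpha(x), $$
obtained just by expanding the block products and reassembling the row $(A\,x)_i$; this is already the form used in the Introduction. Thus~\eqref{ode:pmg} reads $\dot x_i = x_i\bigl((A\,x)_i-\bar f_\alpha(x)\bigr)$ for $i\in\alpha$. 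Since $q\in{\rm int}(\Gamma_\nund)$ has all coordinates strictly positive, $X_{\nund,A}(q)=0$ is then equivalent to the system of scalar equations $(A\,q)_i=\bar f_\alpha(q)$ for every $i\in\alpha$ and every $\alpha\in\{1,\ldots,p\}$.

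Next I would prove the two implications. If $(A\,q)_i=\bar f_\alpha(q)$ for all $i\in\alpha$, then trivially $(A\,q)_i=(A\,q)_j$ for all $i,j\in\alpha$, both sides being equal to $\bar f_\alpha(q)$. Conversely, suppose $(A\,q)_i=(A\,q)_j$ for all $i,j\in\alpha$, and denote this common value by $c_\alpha$. Using that $q\in\Gamma_\nund$, so $\sum_{j\in\alpha}q_j=1$, we get
$$ \bar f_\alpha(q) \;=\; \sum_{j\in\alpha} q_j\,(A\,q)_j \;=\; c_\alpha\sum_{j\in\alpha} q_j \;=\; c_\alpha, $$
hence $(A\,q)_i=c_\alpha=\bar f_\alpha(q)$ for every $i\in\alpha$, i.e.\ $q$ is an equilibrium of $X_{\nund,A}$. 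This establishes the stated equivalence. The only point that requires any care is to remember to use the normalization $\sum_{j\in\alpha}q_j=1$ in the converse direction; dropping it breaks the argument.

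For the final assertion I would simply observe that each condition $(A\,q)_i=(A\,q)_j$ is a single linear equation in $q$, namely the vanishing of the linear functional $x\mapsto (A\,x)_i-(A\,x)_j$ on $\Rr^n$. Therefore the set $L$ of all $x\in\Rr^n$ satisfying $(A\,x)_i=(A\,x)_j$ for all $i,j\in\alpha$ and all $\alpha$ is a linear (in particular affine) subspace of $\Rr^n$, and by the equivalence already proved the set of interior equilibria of $X_{\nund,A}$ is precisely $L\cap{\rm int}(\Gamma_\nund)$. I do not expect any real obstacle: once the average-payoff term is rewritten as $\bar f_\alpha$, the whole proposition is a one-line computation together with the remark that the defining relations are linear.
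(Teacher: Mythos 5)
Your proof is correct and is exactly the direct verification the authors had in mind: the paper states this proposition as ``obvious'' and gives no proof, and your argument --- rewriting the average-payoff term as $\sum_{j\in\alpha}x_j(A\,x)_j$, using positivity of the interior coordinates to cancel the factor $x_i$, and invoking the normalization $\sum_{j\in\alpha}q_j=1$ for the converse --- supplies precisely the missing details. The observation that the defining relations are linear, so the interior equilibria form the intersection of a linear subspace with ${\rm int}(\Gamma_\nund)$, correctly handles the final assertion as well.
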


%%%%%%%%%%%%%%%%%%%%%%%%%%%%%%%%%%%%%%%%%%%%%%%%%%%%%%%%%%%%%%%%%%%%%%%%%%%%%%%%%%%%
%%%%%%%%%%%%%%%%%%%%%%%%%%%%%%%%%%%%%%%%%%%%%%%%%%%%%%%%%%%%%%%%%%%%%%%%%%%%%%%%%%%%

\section{Lotka-Volterra systems }
\label{lvs}

%% Phase space
The standard sector 
\begin{displaymath}
\Rr_+^n=\{(x_1,\dots,x_n)\in \Rr^n : x_i\ge 0 , \quad \forall i\in\{1,\dots,n\} \}.
\end{displaymath}
is the phase space of Lotka-Volterra systems.

%% Def. LV system
\begin{defn} We  call {\em Lotka-Volterra} (LV) to any system of differential equations on $\Rr^n_+$ of the form
\begin{equation}\label{eq1LV}
x_i' =x_i \left( r_i+\sum_{j=1}^n a_{ij}x_j \right), \quad i=1,\dots,n\;.
\end{equation}
\end{defn}

%% interpretation
In the canonical interpretation~\eqref{eq1LV}  models the time evolution of an ecosystem with $n$  species. Each variable
$x_i$ represents the density of species $i$, the coefficient $r_i$ stands for the intrinsic rate of decay or growth of species $i$, and each coefficient $a_{ij}$ represents the effect of population $j$ over population $i$. For instance $a_{ij}>0$  means that population $j$ benefits population $i$. The matrix $A=(a_{ij})_{1\le i,j\le n}$ is called  the interaction matrix of system (\ref{eq1LV}).

%% Equilibria
The interior equilibria of ~\eqref{eq1LV} are the solutions $q\in \Rr^n_+$
of the non-homogeneous linear equation $r+A\,x=0$.
%% Notation: Vector field and flow
Given  $A\in\Mat_{n\times n}(\Rr)$ and   $q\in\Rr^n$ such that $r+A\,q=0$, the LV system~\eqref{eq1LV} can be written as
\begin{equation}\label{LV}
 \frac{dx}{dt}=X_{A,q}(x):=  x \ast A\,(x-q)\;,
\end{equation}
where   $\ast$ denotes the point-wise multiplication of vectors in $\Rr^n$.

%% Dissipative systems
\begin{defn}
\label{def lv dissip}
We say that the LV system~(\ref{LV}), the matrix $A$, or the vector field $X_{A,q}$,  is {\em dissipative} \, iff\, there is a positive diagonal matrix $D$ such that
$Q_{AD}(x)=x^T A D x\leq 0$ for every $x\in \Rr^n$.
\end{defn}

\begin{proposition}
\label{prop LV Lyapunov function}
If  $X_{A,q}$ is dissipative
 then, for any $D=\diag(d_i)$ as in Definition~\ref{def lv dissip},  $X_{A,q}$ admits the Lyapunov function
\begin{equation}\label{Liapunov_function}
h(x)=\sum_{i=1}^n \frac{ x_i-q_i\,\log x_i}{d_i} \;,
\end{equation}
which decreases along orbits of $X_{A,q}$.
\end{proposition}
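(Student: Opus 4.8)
The plan is to verify directly that the derivative of $h$ along solutions of~\eqref{LV} is non-positive, by reducing that derivative to the quadratic form $Q_{AD}$ of Definition~\ref{def lv dissip} evaluated at a suitable point. Throughout, the computation takes place on $\operatorname{int}(\Rr^n_+)$, the forward-invariant region on which $h$ is defined and $C^1$ and which carries the orbits of interest.

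First I would compute the gradient of $h$: for each $i$ we have $\frac{\partial h}{\partial x_i}=\frac{1}{d_i}\bigl(1-\frac{q_i}{x_i}\bigr)=\frac{x_i-q_i}{d_i\,x_i}$. Pairing this with $X_{A,q}(x)=x\ast A(x-q)$, whose $i$-th component is $x_i\,(A(x-q))_i$, the factor $x_i$ cancels and one obtains
\[
\frac{d}{dt}\,h(x(t))=\sum_{i=1}^{n}\frac{x_i-q_i}{d_i}\,\bigl(A(x-q)\bigr)_i .
\]
Then I would change variables by setting $z:=D^{-1}(x-q)$, so that $x-q=Dz$, $\frac{x_i-q_i}{d_i}=z_i$, and $\bigl(A(x-q)\bigr)_i=(ADz)_i$. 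Substituting yields
\[
\frac{d}{dt}\,h(x(t))=\sum_{i=1}^{n} z_i\,(ADz)_i=z^{T}ADz=Q_{AD}(z).
\]
Since by hypothesis $Q_{AD}(w)\le 0$ for \emph{every} $w\in\Rr^n$, in particular for $w=z=D^{-1}(x-q)$, we conclude $\frac{d}{dt}h(x(t))\le 0$ along every orbit, which is the assertion. As a closing remark one may add that the Hessian of $h$ is $\operatorname{diag}\!\bigl(q_i/(d_i x_i^2)\bigr)$, which is positive definite, so $h$ is strictly convex and, when $q$ is an interior equilibrium, attains a strict minimum there; hence $h$ is a genuine Lyapunov function and not merely a non-increasing quantity.

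I do not anticipate any real obstacle. The only points requiring mild care are the bookkeeping with the diagonal matrix $D$ — ensuring the orbit derivative comes out exactly as $Q_{AD}\bigl(D^{-1}(x-q)\bigr)$ and not as some other conjugate of $A$ — and the observation that dissipativity is invoked as a pointwise inequality valid on all of $\Rr^n$, so that it applies to the particular vector $z=D^{-1}(x-q)$ even though this vector ranges only over a translate of the interior sector.
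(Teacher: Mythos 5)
Your proposal is correct and follows essentially the same route as the paper: both compute $\dot h(x)=(x-q)^{T}D^{-1}A(x-q)$ and rewrite it as $Q_{AD}\bigl(D^{-1}(x-q)\bigr)\le 0$. The closing remark on convexity is a harmless addition not present in the paper's proof.
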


\begin{proof}
The derivative of $h$ along orbits of $X_{A,q}$ is given by
\begin{align*}
  \dot{h}(x)   & =\sum_{i,j=1}^n \frac{a_{ij}}{d_i}(x_i-q_i)(x_j-q_j)
  =(x-q)^T D^{-1} A (x-q) \\
&=[D^{-1} (x-q)]^T A D [D^{-1} (x-q)] \le 0. 
\end{align*}
\end{proof}

We will denote by $\Nuc(A)$ the kernel of a matrix $A$.

%% Nuc(A)=Nuc(A^T)
\begin{proposition}
\label{prop nuc A =  nuc AT}
If $A\in \Mat_{n\times n}(\Rr)$ is dissipative and $D$ is a positive diagonal matrix such that $Q_{A D}\leq 0$ then \,  $\Nuc(A)=D\,\Nuc(A^T)$.
\end{proposition}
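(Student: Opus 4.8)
The plan is to exploit the quadratic form inequality $Q_{AD}(x)=x^T A D x\le 0$ and the standard fact that a negative semidefinite (not necessarily symmetric) matrix annihilates a vector precisely when its symmetric part does. Write $B:=AD$, so $B$ is negative semidefinite, i.e. $x^T B x\le 0$ for all $x$. First I would record the elementary linear-algebra lemma: for such a $B$, one has $Bv=0 \iff v^T B v=0 \iff B^T v=0$; equivalently $\Nuc(B)=\Nuc(B^T)=\Nuc(B+B^T)$. This follows by polarization — if $v^T B v=0$ then $v$ is a minimum of the quadratic form $x\mapsto x^TBx$, so its gradient $(B+B^T)v$ vanishes; conversely $Bv=0$ trivially gives $v^TBv=0$, and the same argument applied to $B^T$ (which is also negative semidefinite) closes the circle.

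Next I would translate this back through the diagonal change of variables. Since $D$ is invertible, $\Nuc(AD)=D^{-1}\Nuc(A)$. On the other side, $\Nuc\big((AD)^T\big)=\Nuc(D A^T)=\Nuc(A^T)$ because $D$ is invertible (left multiplication by an invertible matrix does not change the kernel). Combining with the lemma, $D^{-1}\Nuc(A)=\Nuc(AD)=\Nuc((AD)^T)=\Nuc(A^T)$, and multiplying by $D$ yields $\Nuc(A)=D\,\Nuc(A^T)$, which is exactly the claim.

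The only genuinely substantive point is the linear-algebra lemma that a real (possibly non-symmetric) negative semidefinite matrix $B$ satisfies $\Nuc(B)=\Nuc(B^T)$; everything else is bookkeeping with the invertible diagonal matrix $D$. I expect this to be the main — indeed the sole — obstacle, and the cleanest way to dispatch it is the polarization/gradient argument above (alternatively: $x^TBx = x^T\tfrac{B+B^T}{2}x$, diagonalize the symmetric part $S=\tfrac{1}{2}(B+B^T)\le 0$, and check that $v^TSv=0$ forces $Sv=0$; then show $Bv=0$ from $Sv=0$ together with the semidefiniteness of $B$ by testing $B$ on $v+t w$ and letting $t\to 0$). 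I would present the polarization version, as it is shortest and avoids choosing a basis.
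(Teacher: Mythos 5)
Your overall route is the same as the paper's: reduce to $B=AD$, use the symmetric part of $B$ to compare $\Nuc(B)$ with $\Nuc(B^T)$, and then untangle the invertible diagonal factor via $\Nuc(AD)=D^{-1}\Nuc(A)$ and $\Nuc((AD)^T)=\Nuc(A^T)$. That bookkeeping is exactly what the paper does, and it is fine. The problem is with your linear-algebra lemma as literally stated: the equivalence $Bv=0 \iff v^TBv=0$ (and hence the claim $\Nuc(B)=\Nuc(B+B^T)$) is \emph{false} for a general matrix with $x^TBx\le 0$. Take $B$ invertible and skew-symmetric, e.g. $B=\left(\begin{smallmatrix}0&1\\-1&0\end{smallmatrix}\right)$: then $x^TBx\equiv 0$, so every $v$ satisfies $v^TBv=0$ and $(B+B^T)v=0$, yet $\Nuc(B)=\{0\}$. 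The condition $v^TBv=0$ only sees the symmetric part of $B$, so it can never detect the kernel of $B$ itself.

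What is true, and is all you need, is the one-directional chain: $Bv=0 \Rightarrow v^TBv=0 \Rightarrow (B+B^T)v=0$ (your gradient argument --- though note $v$ is a global \emph{maximum} of $x\mapsto x^TBx$, not a minimum) $\Rightarrow B^Tv=(B+B^T)v-Bv=0$; and symmetrically $B^Tv=0\Rightarrow Bv=0$. This gives $\Nuc(B)=\Nuc(B^T)$ without ever asserting the false converse $v^TBv=0\Rightarrow Bv=0$. Your phrase ``closes the circle'' gestures at this, but as written the lemma starts from the hypothesis $v^TBv=0$ rather than $Bv=0$, which is the step that would fail. The paper avoids this trap by decomposing $A=M+N$ into symmetric and skew parts and arguing from $v\in\Nuc(A)$: then $v^TMv=v^TAv=0$ forces $Mv=0$ because $M$ is \emph{symmetric} negative semidefinite, whence $Nv=Av-Mv=0$ and $A^Tv=Mv-Nv=0$. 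With the lemma restated in this one-directional form your proof is correct and essentially identical to the paper's.
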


\begin{proof}
Assume first that $Q_A\leq 0$ on $\Rr^n$ and consider the decomposition $A = M + N$ with
$M=(A   + A^T)/2$ and $N=(A - A^T)/2$.
Clearly $\Nuc(M)\cap \Nuc(N)\subseteq \Nuc(A)$.
On the other hand, if $v\in\Nuc(A)$ then $v^T\, M\,v= v^T A\,v=0$.
Because $Q_M=Q_A\leq 0$ this implies that $M\,v=0$,
i.e., $v\in\Nuc(M)$. Finally, since $N=A-M$,  $v\in \Nuc(N)$.
This proves that  $\Nuc(A)=\Nuc(M)\cap \Nuc(N)$.
Similarly, one proves that $\Nuc(A^T)=\Nuc(M)\cap \Nuc(N)$.
Thus $\Nuc(A)=\Nuc(A^T)$. 

In general, if $Q_{A D}\leq 0$, we have
$\Nuc((A D)^T)= \Nuc( D A^T)= \Nuc(A^T)$,
and $\Nuc(A D)= D^{-1}\Nuc(A)$. Thus, from the previous case applied 
to $A D$ we get $D^{-1}\Nuc(A)=\Nuc(A^T)$.
\end{proof}

%% invariant foliation
\begin{proposition}
\label{prop LV invariant foliation}
Any dissipative LV system admits an invariant foliation on ${\rm int}(\Rr^n_+)$ with a unique equilibrium point in each leaf.
\end{proposition}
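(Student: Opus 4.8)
The plan is to exhibit the foliation as the fibre decomposition of an explicit submersion built from the first integrals of the system, and then to identify the equilibrium on each leaf as the unique minimum of the Lyapunov function $h$ of Proposition~\ref{prop LV Lyapunov function}. Since the statement presupposes an interior equilibrium, let $q\in\inter(\Rr^n_+)$ be one, so that the full set of interior equilibria is $(q+\Nuc(A))\cap\inter(\Rr^n_+)$. Set $k:=\dim\Nuc(A)=\dim\Nuc(A^T)$, fix a basis $c^{(1)},\dots,c^{(k)}$ of $\Nuc(A^T)$, and define
\[ F\colon\inter(\Rr^n_+)\to\Rr^k,\qquad F(x)_j:=\sum_{i=1}^n c^{(j)}_i\,\log x_i\,. \]
First I would check that $F$ is a submersion: its differential at $x$ is $v\mapsto\bigl(\sum_i c^{(j)}_i v_i/x_i\bigr)_{j}$, and since $x_i>0$ the substitution $w_i=v_i/x_i$ is a linear automorphism of $\Rr^n$, while $w\mapsto\bigl((c^{(j)})^T w\bigr)_j$ is onto $\Rr^k$ because $c^{(1)},\dots,c^{(k)}$ are linearly independent. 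As the logarithm is a diffeomorphism onto $\Rr^n$, the fibres of $F$ are connected, so they are the leaves of a smooth foliation $\Fscr$ of $\inter(\Rr^n_+)$ by $(n-k)$-dimensional submanifolds.

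To see that $\Fscr$ is invariant, I would differentiate $F$ along a solution $x(t)$ of $X_{A,q}$:
\[ \frac{d}{dt}\,F(x)_j=\sum_{i} c^{(j)}_i\,\frac{\dot x_i}{x_i}=\sum_{i} c^{(j)}_i\,(A(x-q))_i=(c^{(j)})^T A\,(x-q)=0\,, \]
because $A^T c^{(j)}=0$. Hence $F$ is a first integral of $X_{A,q}$ and every leaf of $\Fscr$ is flow-invariant.

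It remains to show that each leaf carries exactly one equilibrium. I would pass to the logarithmic coordinates $y_i=\log x_i$, which identify $\inter(\Rr^n_+)$ diffeomorphically with $\Rr^n$; there each leaf becomes an affine subspace $L$ cut out by the equations $(c^{(j)})^T y=b_j$, $j=1,\dots,k$, with direction space $\Nuc(A^T)^\perp=\im(A)$, while $h$ takes the form $\sum_i(e^{y_i}-q_i y_i)/d_i$, whose Hessian is $\diag(e^{y_i}/d_i)$. Thus $h$ is strictly convex on $\Rr^n$, and it is proper because every $q_i>0$; consequently $h|_L$ is strictly convex and proper, so it attains its infimum at a unique point $y^\ast$, which is then the only critical point of $h|_L$. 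Now $y$ is a critical point of $h|_L$ exactly when $\nabla_y h=D^{-1}(x-q)$ lies in $(\im A)^\perp=\Nuc(A^T)$; by Proposition~\ref{prop nuc A =  nuc AT}, which gives $\Nuc(A)=D\,\Nuc(A^T)$, this is equivalent to $x-q\in\Nuc(A)$, i.e. to $A(x-q)=0$, and, since $x$ has positive entries, to $X_{A,q}(x)=0$. Therefore $L$ meets $q+\Nuc(A)$ in exactly one point, the one with logarithmic coordinates $y^\ast$, and this point lies in $\inter(\Rr^n_+)$.

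I expect the third step to be the main obstacle. Its difficulty is that three a priori different linear objects must be matched up: the leaves of $\Fscr$ are level sets of functionals drawn from $\Nuc(A^T)$, the gradient of $h$ in logarithmic coordinates is $D^{-1}(x-q)$, and the equilibria are controlled by $\Nuc(A)$; the identity $\Nuc(A)=D\,\Nuc(A^T)$ of Proposition~\ref{prop nuc A =  nuc AT} is exactly what bridges them. A secondary point requiring care is the properness of $h$ restricted to a leaf --- in logarithmic coordinates the leaves are unbounded affine subspaces --- and this is where the hypothesis $q\in\inter(\Rr^n_+)$ is used; without an interior equilibrium the statement is vacuous.
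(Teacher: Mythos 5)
Your proof is correct. The paper disposes of this proposition by citing \cite{DP2012}, but it proves the exact polymatrix analogue in full (Proposition~\ref{prop invariant foliation}), and the comparison is instructive. The construction of the foliation is the same in both: the first integrals $x\mapsto\sum_i c_i\log x_i$ with $c\in\Nuc(A^T)$ assemble into a submersion whose fibres are the leaves, and invariance is the one-line computation you give. Where you diverge is the count of equilibria per leaf. The paper passes to $u=\log x$, writes the intersection of a leaf with the equilibrium set as a nonlinear system $Bu=c$, $C(e^u-q)=0$, and proves injectivity by the mean value theorem together with the nonsingularity of a stacked block matrix --- an argument that by itself only gives ``at most one'', with existence handled rather tersely. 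You instead restrict the Lyapunov function $h$ to a leaf, observe that in logarithmic coordinates it is strictly convex with a proper restriction to each (affine) leaf, and use the identity $\Nuc(A)=D\,\Nuc(A^T)$ of Proposition~\ref{prop nuc A = nuc AT} to convert ``critical point of $h$ on the leaf'' into ``equilibrium''; this yields existence and uniqueness in one stroke and makes the role of the interior hypothesis (properness of $h$) transparent. Both routes pivot on the same kernel identity. One remark: the proposition as printed does not state the hypothesis $q\in\inter(\Rr^n_+)$, and without it the claim fails (e.g.\ $x'=x(-1-x)$ is dissipative with no interior equilibrium and a single leaf); your reading that an interior equilibrium is presupposed is the intended one, consistent with the sentence following the proposition in the paper.
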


\begin{proof}
See~\cite[Proposition 2.1 and Theorem 2.3]{DP2012}.
\end{proof}

On the rest of this section we focus attention on LV systems with interior equilbria $q\in {\rm int}(\Rr^n_+)$.
In this case the Lyapunov function $h$ is proper, and hence the forward orbits of ~\eqref{LV} are complete.
Therefore, the vector field $X_{A,q}$  induces a complete semi-flow $\phi_{A,q}^t$ on ${\rm int}(\Rr^n_+)$.

% Graph of a LV system
\begin{defn}\label{LV graph}
 Given a matrix $A=(a_{ij})\in\Mat_{n\times n}(\Rr)$ of a LV system,  we define its associated graph $G(A)$ to have  vertex set $\{1,\dots, n\}$, 
 and to contain an edge connecting vertex $i$ to vertex $j$\, iff \,$a_{ij}\neq0$ or $a_{ji}\neq0$.
\end{defn}

%% admissible perturbations
Given a matrix $A=(a_{ij})\in \Mat_{n\times n}(\Rr)$ we call \emph{admissible perturbation} of $A$ to any other matrix $\tilde{A}=(\tilde{a}_{ij})\in \Mat_{n\times n}(\Rr)$ such that
\begin{displaymath}
\tilde{a}_{ij}\approx a_{ij} \quad \text{ and } \quad
\tilde{a}_{ij}=0 \, \Leftrightarrow \, a_{ij}=0.
\end{displaymath}
By definition, admissible perturbation are perturbations of $A$ such that
$G(A)=G(\tilde{A})$.

%% Stably Dissipative systems
\begin{defn}\label{stably dissip}
A matrix $A\in \Mat_{n\times n}(\Rr)$ is said to be \emph{stably dissipative}
if any small enough admissible perturbation  $\tilde{A}$ of $A$ is dissipative, i.e., if there exists $\varepsilon >0$ such that for any admissible perturbation
$\tilde{A}=(\tilde{a}_{ij})$ of $A=(a_{ij})$,
\begin{displaymath}
\max_{1\le i,j\le n}|a_{ij}-\tilde{a}_{ij}|<\varepsilon \, \Rightarrow \, \tilde{A} \textrm{ is dissipative}.
\end{displaymath}

A LV system~\eqref{LV}  is said to be \emph{stably dissipative} if its
 interaction matrix is  stably dissipative.
\end{defn}

\begin{lemma}\label{A sd and D positive implies AD sd}
Let $D$ be a positive diagonal matrix. If $A$ is a stably dissipative matrix, then
$AD$ and $D^{-1} A$ are also stably dissipative.
\end{lemma}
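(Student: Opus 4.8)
The strategy is to reduce the statement to two routine facts. The first is that multiplication, on either side, by a positive diagonal matrix preserves both the zero-pattern of a matrix — hence its graph $G(\cdot)$ — and the property of being dissipative. The zero-pattern claim is immediate since $(BD)_{ij}=b_{ij}d_j$ and $(D^{-1}B)_{ij}=b_{ij}/d_i$ with all $d_i>0$. For the dissipativeness part, suppose $B$ is dissipative with witness $D'$, meaning (in the sense of Definition~\ref{def lv dissip}) that $D'$ is positive diagonal and $Q_{BD'}\le 0$; and let $D$ be positive diagonal. Then $(BD)(D^{-1}D')=BD'$, so $Q_{(BD)(D^{-1}D')}(x)=x^TBD'x\le 0$ and $BD$ is dissipative with witness $D^{-1}D'$. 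Since $D^{-1}$ is symmetric, $x^TD^{-1}=(D^{-1}x)^T$, whence $Q_{(D^{-1}B)(D'D^{-1})}(x)=(D^{-1}x)^TBD'(D^{-1}x)=Q_{BD'}(D^{-1}x)\le 0$, so $D^{-1}B$ is dissipative with witness $D'D^{-1}$. Both $D^{-1}D'$ and $D'D^{-1}$ are positive diagonal, so these are legitimate witnesses.

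Next I would set up the correspondence between admissible perturbations. Fix $D=\diag(d_1,\dots,d_n)$ and a stably dissipative matrix $A$ with stability constant $\varepsilon>0$ as in Definition~\ref{stably dissip}. Given any admissible perturbation $B=(b_{ij})$ of $AD$, set $\tilde A:=BD^{-1}$, so that $\tilde a_{ij}=b_{ij}/d_j$. Since $(AD)_{ij}=a_{ij}d_j$ and $d_j>0$, we get $\tilde a_{ij}=0\Leftrightarrow b_{ij}=0\Leftrightarrow(AD)_{ij}=0\Leftrightarrow a_{ij}=0$, so $\tilde A$ is an admissible perturbation of $A$; and $\max_{i,j}|\tilde a_{ij}-a_{ij}|=\max_{i,j}|b_{ij}-(AD)_{ij}|/d_j$. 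Hence if $\max_{i,j}|b_{ij}-(AD)_{ij}|<\varepsilon\min_j d_j$, then $\tilde A$ lies within distance $\varepsilon$ of $A$ and is therefore dissipative; by the first fact, $B=\tilde A D$ is dissipative. As $B$ was an arbitrary admissible perturbation within radius $\varepsilon\min_j d_j$, this proves $AD$ is stably dissipative. The argument for $D^{-1}A$ is the mirror image: given an admissible perturbation $B$ of $D^{-1}A$, put $\tilde A:=DB$, note $\tilde a_{ij}=d_ib_{ij}$, and use the stability constant $\varepsilon/\max_i d_i$; alternatively one can observe that $(D^{-1}A)^T=A^TD^{-1}$, that $A$ is stably dissipative if and only if $A^T$ is (the graph and the perturbation condition are transpose-invariant, and $Q_{CD'}\le 0\Leftrightarrow Q_{C^T(D')^{-1}}\le 0$ after the substitution $y=D'x$), and then invoke the case already proved for right multiplication applied to $A^T$.

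There is no genuine obstacle here — the proof is essentially bookkeeping — but the one point that must be handled carefully is the scaling of the perturbation radius: an $\eta$-perturbation of $AD$ pulls back only to an $(\eta/\min_j d_j)$-perturbation of $A$, so the stability constant inherited by $AD$ is $\varepsilon\min_j d_j$ rather than $\varepsilon$ itself, and symmetrically $\varepsilon/\max_i d_i$ for $D^{-1}A$. It is also worth recording at the outset that "$Q_{CD'}(x)\le 0$ for all $x$" is exactly the defining condition in Definition~\ref{def lv dissip}, so that the manipulations above are literally changes of the diagonal matrix $D'$ witnessing dissipativeness.
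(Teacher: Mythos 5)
Your proof is correct and follows essentially the same route as the paper's: show that right/left multiplication by a positive diagonal matrix preserves dissipativeness by transferring the witness (e.g.\ $Q_{(AD)(D^{-1}D')}=Q_{AD'}$), then pull back an admissible perturbation $B$ of $AD$ to the admissible perturbation $\tilde A=BD^{-1}$ of $A$. The only difference is that you make explicit the rescaling of the perturbation radius (by $\min_j d_j$, resp.\ $\max_i d_i$), which the paper leaves implicit in the phrase ``small enough.''
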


\begin{proof}
Since $A$ is dissipative there exists a positive diagonal matrix $D'$ such that
$ Q_{AD'}\leq 0\,, $ which is equivalent to $ Q_{(AD)(D^{-1}D')}\le 0\,.$
Hence  $AD$ is dissipative.
Analogously, since $Q_{A D'}\leq 0$ we have
$Q_{D^{-1}A D' D^{-1}} (x)=Q_{A D'}(D^{-1}x)\leq 0$,
which shows that $D^{-1} A$ is dissipative.

Let $B$ be a small enough admissible perturbation of $AD$. Then there exists an
admissible perturbation  $\tilde{A}$ of $A$ such that
$B=\tilde{A} D$. Since $A$ is stably dissipative the matrix $\tilde{A}$ is
dissipative as well. Hence  there exists a positive diagonal matrix $D''$ such that
$ Q_{\tilde{A} D''}\leq 0\,, $ which is equivalent to
$ Q_{(\tilde{A} D)(D^{-1}D'')}\le 0 $.
This proves that  $B=\tilde{A}D$ is dissipative. Therefore $AD$ is stably dissipative.

A similar argument proves that
$D^{-1}A$ is stably dissipative.
\end{proof}

\begin{defn}
Given a matrix $A\in \Mat_{n\times n}(\Rr)$ and a subset $I\subseteq\{1,\dots,n\}$, we say that
$A_I=(a_{ij})_{(i,j)\in I\times I}$ is the {\em submatrix} $I\times I$ of $A$.
\end{defn}

\begin{lemma}\label{lemma 3.1.5. masterthesis}
Let $A\in \Mat_{n\times n}(\Rr)$ be a stably dissipative matrix. Then, for all $I\subseteq\{1,\dots,n\}$, the submatrix $A_I$ is stably dissipative.
\end{lemma}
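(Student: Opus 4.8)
The plan is to reduce an admissible perturbation of $A_I$ to an admissible perturbation of the full matrix $A$, simply by leaving all entries outside the block $I\times I$ untouched, and then to restrict the dissipativity quadratic form back to the coordinates indexed by $I$.

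First I would fix a constant $\varepsilon>0$ witnessing that $A$ is stably dissipative in the sense of Definition~\ref{stably dissip}, and I claim the \emph{same} $\varepsilon$ works for $A_I$. So let $B=(b_{ij})_{(i,j)\in I\times I}$ be an arbitrary admissible perturbation of $A_I$ with $\max_{i,j\in I}|b_{ij}-a_{ij}|<\varepsilon$, and define $\tilde A=(\tilde a_{ij})\in\Mat_{n\times n}(\Rr)$ by $\tilde a_{ij}=b_{ij}$ when $(i,j)\in I\times I$ and $\tilde a_{ij}=a_{ij}$ otherwise. The next step is to verify that $\tilde A$ is an admissible perturbation of $A$ with $\max_{i,j}|\tilde a_{ij}-a_{ij}|<\varepsilon$: for $(i,j)\notin I\times I$ the entry is literally unchanged, so both the zero pattern and the magnitude condition hold trivially; for $(i,j)\in I\times I$ they follow from the admissibility of $B$ as a perturbation of $A_I$. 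Since $A$ is stably dissipative, $\tilde A$ is therefore dissipative: there is a positive diagonal matrix $D=\diag(d_1,\dots,d_n)$ with $Q_{\tilde A D}(x)=x^T\tilde A D x\le 0$ for all $x\in\Rr^n$ (Definition~\ref{def lv dissip}).

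Then I would obtain the required diagonal matrix for $B$ by restriction. Set $D_I=\diag(d_i)_{i\in I}$, again a positive diagonal matrix. Given $y\in\Rr^I$, let $x\in\Rr^n$ be its extension by zero outside $I$. Every term $\tilde a_{ij}\,d_j\,x_i x_j$ with $i\notin I$ or $j\notin I$ vanishes, so
\[
y^T B D_I y \;=\; \sum_{i,j\in I} b_{ij}\,d_j\,y_i y_j \;=\; \sum_{i,j\in I}\tilde a_{ij}\,d_j\,x_i x_j \;=\; x^T \tilde A D x \;\le\; 0 .
\]
Hence $Q_{B D_I}\le 0$, i.e. $B$ is dissipative. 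Since $B$ was an arbitrary admissible $\varepsilon$-perturbation of $A_I$, this shows $A_I$ is stably dissipative.

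I do not expect a real obstacle here. The only points needing a little care are the bookkeeping that ``extension by the original entries'' genuinely produces an admissible perturbation of $A$ in the precise sense of Definition~\ref{stably dissip} (identical zero pattern, controlled sup-norm change), and the elementary observation that restricting the quadratic form $x\mapsto x^T\tilde A D x$ to the coordinate subspace $\{x\in\Rr^n : x_i=0 \text{ for } i\notin I\}$ gives exactly $y\mapsto y^T (\tilde A)_I D_I\, y$. Both are routine once the construction is in place.
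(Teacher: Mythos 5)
Your proposal is correct and follows essentially the same route as the paper: extend the admissible perturbation $B$ of $A_I$ to an admissible perturbation $\tilde A$ of $A$ by keeping all entries outside $I\times I$ unchanged, invoke stable dissipativity of $A$ to get a positive diagonal $D$ with $Q_{\tilde A D}\le 0$, and restrict to the coordinate subspace indexed by $I$ to conclude $Q_{B D_I}\le 0$. Your version merely spells out the $\varepsilon$-bookkeeping and the restriction of the quadratic form, which the paper leaves implicit.
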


\begin{proof}
Let $I\subset \{1,\dots,n\}$ and consider
an admissible perturbation  $B=(b_{ij})_{i,j\in I}$  of $A_I$.
Define $\tilde{A}=(\tilde a_{ij})$  to be the matrix with entries 
$$ \tilde a_{ij}=\left\{ 
\begin{array}{lll}
b_{ij} & \text{ if } & (i,j)\in I\times I\\
a_{ij} & \text{ if } & (i,j)\notin I\times I
\end{array}
\right. . $$
Clearly, $\tilde{A}$ is an admissible perturbation of $A$. Hence there exists a positive diagonal matrix $D$ such that $\tilde{A}D\le 0$. Letting now
$D_I$ be the $I\times I$  submatrix of $D$,   we see that $B D_I =(\tilde A\, D)_I \le 0$, which concludes the proof.
\end{proof}

%% The attractor 
\begin{defn}
We call attractor of the LV system~\eqref{LV}
to the following topological closure
 $$\attr{A,q} :=\overline{ \cup_{x\in \Rr^ n_+} \omega(x) }\;, $$
 where $\omega(x)$ is the $\omega$-limit of $x$ by the semi-flow
 $\{\phi_{A,q}^t:\Rr^n_+\to\Rr^n_+\}_{t\geq 0}$. 
\end{defn}

%% La Salle Theorem
We need the following classical theorem
(see~\cite[Theorem 2]{LaS1968}).
\begin{theorem}[La Salle]
\label{La Salle theorem}
Given a vector field $f(x)$ on  a manifold $M$,
consider the autonomous  o.d.e. on $M$,
\begin{equation}
\label{ode}
x' = f(x) .
\end{equation}  
Let $h:M\to\Rr$ be a smooth function such that
\begin{enumerate}
\item $h$ is a Lyapunov function, i.e., the derivative of $h$ along the flow satisfies 
$\dot h(x):= D h_x f(x)\leq 0$ for all $x\in M$.
\item $h$ is bounded from below.
\item $h$ is a proper function, i.e.
$ \{ h\leq a\}$ is compact for all $a\in\Rr$.
\end{enumerate}
Then~\eqref{ode} induces a complete semi-flow on $M$ such that the topological closure of all its $\omega$-limits is contained in the region where the derivative of $h$ along the flow  vanishes, i.e.,
$$   \overline{\cup_{x\in M} \omega(x)}
\subseteq \{ x\in M \colon \dot h(x)=0 \}.$$
\end{theorem}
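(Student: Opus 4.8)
The plan is to follow La Salle's invariance principle, broken into four steps carried out in the following order.

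\emph{Completeness.} First I would fix $x_0\in M$ and show the forward orbit is precompact. By hypothesis (1), the map $t\mapsto h(\phi^t(x_0))$ is non-increasing on the maximal interval of definition of the solution through $x_0$, so the forward orbit stays inside the sublevel set $\{h\le h(x_0)\}$, which is compact by hypothesis (3). A maximal solution confined to a fixed compact set cannot escape to infinity in finite time, so it extends to all $t\ge 0$; this yields a well-defined complete forward semi-flow $\{\phi^t\}_{t\ge 0}$ on $M$ (this step implicitly uses that $f$ is regular enough, e.g. locally Lipschitz, for solutions to exist and be unique).

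\emph{Properties of $\omega$-limits.} Next I would invoke the standard facts about $\omega$-limit sets of orbits with precompact forward trajectory: for every $x_0\in M$ the set $\omega(x_0)=\bigcap_{T\ge 0}\overline{\{\phi^t(x_0):t\ge T\}}$ is non-empty, compact, connected, and invariant under the semi-flow. Only non-emptiness and invariance are actually needed below, and both follow from precompactness of the forward orbit established in the previous step.

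\emph{Constancy of $h$ and vanishing of $\dot h$.} Since $t\mapsto h(\phi^t(x_0))$ is non-increasing by (1) and bounded below by (2), the limit $c:=\lim_{t\to\infty}h(\phi^t(x_0))$ exists and is finite. Given any $y\in\omega(x_0)$, choose $t_n\to\infty$ with $\phi^{t_n}(x_0)\to y$; continuity of $h$ gives $h(y)=c$, so $h\equiv c$ on $\omega(x_0)$. Now use invariance: for $y\in\omega(x_0)$ the whole forward orbit $\{\phi^t(y):t\ge 0\}$ lies in $\omega(x_0)$, hence $h(\phi^t(y))\equiv c$; differentiating at $t=0$ gives $\dot h(y)=Dh_y\,f(y)=0$. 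Thus $\omega(x_0)\subseteq\{\dot h=0\}$. Taking the union over all $x_0\in M$ and using that $\dot h = Dh\cdot f$ is continuous (so $\{\dot h=0\}$ is closed), we conclude $\overline{\bigcup_{x\in M}\omega(x)}\subseteq\{x\in M:\dot h(x)=0\}$, as claimed.

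\emph{Main obstacle.} The only genuinely substantive point is the invariance of $\omega(x_0)$ together with the observation that constancy of a Lyapunov function on an invariant set forces its derivative along the flow to vanish identically there — this is exactly La Salle's insight. Everything else (no finite-time blow-up inside a compact set, monotone-and-bounded convergence of $h$ along orbits, continuity of $\dot h$) is routine bookkeeping.
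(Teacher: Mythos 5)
Your proof is correct. Note that the paper does not actually prove this theorem; it simply cites LaSalle's original article (\cite{LaS1968}, Theorem 2), so there is no in-paper argument to compare against. What you have written is the standard proof of the invariance principle, and all the steps are sound: compact sublevel sets trap forward orbits and rule out finite-time blow-up, precompactness gives non-empty invariant $\omega$-limit sets, monotonicity plus the lower bound force $h$ to be constant on each $\omega(x_0)$, and forward invariance then kills $\dot h$ there (only the one-sided derivative at $t=0^+$ is needed, so positive invariance of the $\omega$-limit set under the semi-flow suffices). Your closing observation that $\{\dot h = 0\}$ is closed, so the containment survives taking the topological closure of the union, is exactly the point needed to match the statement as given.
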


%% Basic Lemma

The following lemma plays a key role in the theory of stably dissipative systems.

\begin{lemma}\label{lema2.1.ZL2010}
Given a stably dissipative matrix $A$, if $D$ is a positive
diagonal matrix $D$ such that $Q_{AD}\leq 0$ then for all $i=1,\dots, n$
and $w\in\Rr^n$,
$$ Q_{A D}(w) = 0 \quad \Rightarrow\quad a_{ii}\,w_i=0 \; .$$
\end{lemma}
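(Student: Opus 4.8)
The plan is to exploit the structure of a stably dissipative matrix: the sign pattern of $A$ is preserved under small perturbations, so the inequality $Q_{AD}\le 0$ must survive after perturbing the diagonal entries $a_{ii}$ individually. First I would fix $D=\diag(d_i)$ with $Q_{AD}\le 0$ and suppose $w\in\Rr^n$ satisfies $Q_{AD}(w)=0$. Write $Q_{AD}(x)=x^TADx$; since $D$ is a fixed positive diagonal matrix, $A\mapsto AD$ and $B\mapsto BD^{-1}$ are bijections between admissible perturbations of $A$ and admissible perturbations of $AD$ (as in Lemma~\ref{A sd and D positive implies AD sd}), so I may as well replace $A$ by $AD$ and assume $D=I$, i.e.\ work with $Q_A(x)=x^TAx\le 0$, $Q_A(w)=0$, and aim to conclude $a_{ii}w_i=0$; this reduces notational clutter.

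With $Q_A\le 0$ and $Q_A(w)=0$, the point $w$ is a global maximizer of the quadratic form $Q_A$, hence a critical point of $Q_A$: $\nabla Q_A(w)=(A+A^T)w=0$. Writing $M=(A+A^T)/2$ as in the proof of Proposition~\ref{prop nuc A = nuc AT}, this says $Mw=0$, and then $Q_A(w+tv)=Q_A(w)+2t\,w^TMv+t^2 Q_A(v)=t^2Q_A(v)\le 0$ for all $v$ and all $t\in\Rr$; so actually $w+\Rr v$ stays in the zero set of $Q_A$ only when $Q_A(v)=0$, but more importantly $w$ lies in the kernel of $M$, equivalently (again as in that proposition's argument) $Aw=0$ and $A^Tw=0$. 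The key consequence is: if $w_i\ne 0$, then the $i$-th coordinate equation $(Aw)_i=\sum_j a_{ij}w_j=0$ persists, and I want to force $a_{ii}=0$.

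Here is where stable dissipativeness enters. Suppose toward a contradiction that $a_{ii}\ne 0$ and $w_i\ne 0$. Perturb only the diagonal entry: let $\tilde A = A+\varepsilon\, e_i e_i^T$ for small $\varepsilon$; since $a_{ii}\ne 0$, this is an admissible perturbation ($G(\tilde A)=G(A)$). By stable dissipativeness there is a positive diagonal $\tilde D$ with $Q_{\tilde A\tilde D}\le 0$; but I would rather argue with the symmetric part directly. The clean route: for an admissible diagonal perturbation $\tilde A$ we still need some $\tilde D$, so instead I evaluate the \emph{original} hypothesis more carefully. Compute $Q_{\tilde A}(w)=w^T A w+\varepsilon w_i^2=\varepsilon w_i^2$. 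If $\varepsilon$ has the sign opposite to... — this does not immediately work because we changed $D$. The correct mechanism, which I expect to be the main obstacle, is to combine: (i) $Aw=A^Tw=0$ from the maximizer argument, so $w$ is in the kernel of both $A$ and $A^T$; (ii) a perturbed matrix $\tilde A$ (diagonal perturbation at $i$) must be dissipative with \emph{some} $\tilde D=\diag(\tilde d_j)$, and by Proposition~\ref{prop nuc A = nuc AT} applied to $\tilde A$, $\Nuc(\tilde A)=\tilde D\,\Nuc(\tilde A^T)$; (iii) for $\varepsilon$ small, continuity/semicontinuity of kernels forces $\dim\Nuc(\tilde A)\le\dim\Nuc(A)$, while $\tilde A w = A w+\varepsilon w_i e_i=\varepsilon w_i e_i\ne 0$ shows $w\notin\Nuc(\tilde A)$ — so the kernel strictly shrinks in a controlled direction, and one derives that $e_i$ acquires a nonzero pairing that violates $Q_{\tilde A\tilde D}\le 0$ along the vector $w$ (or along $w$ plus a small correction). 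Concretely: since $Q_{\tilde A\tilde D}\le 0$ and $Q_{\tilde A\tilde D}$ is a perturbation of $Q_{A\tilde D}$, and $Q_{A\tilde D}(w)=w^TA\tilde Dw$; choosing the correction vector $u$ with $\tilde D u$ compensating, one gets $Q_{\tilde A\tilde D}(w+u) = \varepsilon\,\tilde d_i\, w_i^2 + O(\text{small})$, which for suitable sign of $\varepsilon$ is positive, contradicting $Q_{\tilde A\tilde D}\le 0$. Thus $a_{ii}w_i=0$. Undoing the reduction to $D=I$ (replacing $A$ by $AD$, noting $(AD)_{ii}=a_{ii}d_i$ and $d_i>0$) gives the statement for the original $A$ and $D$. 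The delicate point throughout is that the diagonal matrix realizing dissipativeness of $\tilde A$ is \emph{not} the same $D$ as for $A$, so the contradiction must be made robust under changing the diagonal; handling that uniformity is the heart of the argument.
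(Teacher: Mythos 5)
Your argument has two problems, one local and one structural, and the second one explains why you kept hitting the same obstacle. The local error is the step where you upgrade $(A+A^{T})w=0$ to $Aw=0$ and $A^{T}w=0$. Proposition~\ref{prop nuc A = nuc AT} gives the inclusion $\Nuc(A)\subseteq\Nuc(M)$ for the symmetric part $M=(A+A^{T})/2$ when $Q_{M}\le 0$; it does not give the converse, which would require $w$ to lie in the kernel of the skew part as well. Concretely, for $A=\left(\begin{smallmatrix}-1&2\\0&-1\end{smallmatrix}\right)$ and $w=(1,1)$ one has $Q_{A}\le 0$, $(A+A^{T})w=0$, yet $Aw=(1,-1)\neq 0$. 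After this step your proof has no information about $w$ beyond $Mw=0$, and the remaining perturbation argument is only a sketch whose key estimate (the sign of $Q_{\tilde A\tilde D}(w+u)$ after the uncontrolled change of diagonal from $D$ to $\tilde D$) is exactly the part you acknowledge you cannot yet handle.

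That part cannot be handled, because the statement with an \emph{arbitrary} $D$ satisfying $Q_{AD}\le 0$ is false. The same matrix $A=\left(\begin{smallmatrix}-1&2\\0&-1\end{smallmatrix}\right)$ is stably dissipative (every admissible perturbation keeps the $(2,1)$ entry zero and is made dissipative by $D=\diag(4,1)$), the choice $D=I$ satisfies $Q_{A}\le 0$, and $w=(1,1)$ gives $Q_{A}(w)=0$ with $a_{11}w_{1}=-1\neq 0$. The lemma as proved in \cite{RZ1982}, and as it is actually used two lines later in the paper, is existential in $D$: one must \emph{choose} the diagonal matrix well. The short correct argument runs as follows: let $A_{\varepsilon}$ be the admissible perturbation with $a^{\varepsilon}_{ii}=(1-\varepsilon)a_{ii}$ and off-diagonal entries unchanged; stable dissipativeness provides a positive diagonal $D_{\varepsilon}=\diag(d^{\varepsilon}_{i})$ with $Q_{A_{\varepsilon}D_{\varepsilon}}\le 0$, whence $Q_{AD_{\varepsilon}}(w)=Q_{A_{\varepsilon}D_{\varepsilon}}(w)+\varepsilon\sum_{i}a_{ii}d^{\varepsilon}_{i}w_{i}^{2}\le\varepsilon\sum_{i}a_{ii}d^{\varepsilon}_{i}w_{i}^{2}\le 0$ because $a_{ii}\le 0$; so $Q_{AD_{\varepsilon}}\le 0$, and $Q_{AD_{\varepsilon}}(w)=0$ forces each term $a_{ii}d^{\varepsilon}_{i}w_{i}^{2}$ to vanish, i.e.\ $a_{ii}w_{i}=0$. (Equivalently, take $D$ as in Proposition~\ref{sd:char}, so that $AD$ is almost skew-symmetric and $Q_{AD}$ is negative definite on the coordinates with $a_{ii}\neq 0$.) So the fix is not to make your diagonal perturbation uniform over all admissible $D$ --- that is impossible --- but to let the perturbation itself produce the particular diagonal matrix for which the conclusion holds.
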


 \begin{proof}
See~\cite{RZ1982}.
 \end{proof}

By Theorem~\ref{La Salle theorem}
the attractor $\attr{A,q}$ is contained in the
set $\{ \dot h=0\}$.
By the proof Proposition~\ref{prop LV Lyapunov function} we have \,  $\dot h(x) =  Q_{D^{-1}A}(x-q)$. Hence
$$\attr{A,q}\subseteq \left\{  x\in\Rr^n_+ \colon  Q_{D^{-1}A}(x-q)=0 \right\} ,$$
and  by Lemma~\ref{lema2.1.ZL2010} it follows that $\attr{A,q}\subseteq \{ x \colon x_i=q_i\}$ for every $i=1,\ldots, n$ such that
$a_{ii}<0$. 

\bigskip

Let us say that a species $i$ {\em is of type $\blV$}  to mean that the following inclusion holds  $\attr{A,q}\subseteq \{ x\,:\, x_i=q_i\}$.
Similarly, we say that a species $i$ {\em is of type $\oplus$}, to state that 
$\attr{A,q}\subseteq \{ x\,:\, X_{A,q}^i(x)=0 \}$,
where $X_{A,q}^i(x)$ stands for the $i$-th component of the vector $X_{A,q}(x)$. Equivalently,
the strategy $i$ is of type $\oplus$ if and only if the sets  $\{x_i={\rm const}\}$ are  invariant under the flow $\phi_{A,q}^t:\attr{A,q}\hookleftarrow$.
With this terminology it can be proven that

\begin{proposition} Given neighbor vertexes $j,l$ in the graph $G(A)$,
\begin{itemize}
	\item[(a)] If  $j$ is of type $\bullet$ or $\oplus$  and all of its neighbors are of type $\bullet$, except for  $l$, then  $l$ is of type $\bullet$;
	\item[(b)] If  $j$ is of type $\bullet$ or $\oplus$  and all of its neighbors are of type $\bullet$ or $\oplus$, except for  $l$, then   $l$ is of type $\oplus$;
	\item[(c)] If  all  neighbors of $j$ are of type $\bullet$ or $\oplus$, then $j$ is of type $\oplus$.
\end{itemize}
\end{proposition}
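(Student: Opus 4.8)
The plan is to run the argument through the Lyapunov function $h$ and the quadratic form $Q_{D^{-1}A}$, exactly as was done in the discussion preceding the statement. Fix a positive diagonal $D$ with $Q_{AD}\le 0$; recall from the proof of Proposition~\ref{prop LV Lyapunov function} that $\dot h(x)=Q_{D^{-1}A}(x-q)$, so that $\attr{A,q}\subseteq\{x:Q_{D^{-1}A}(x-q)=0\}$. Writing $B:=D^{-1}A$ and $w:=x-q$, the key algebraic fact is the \emph{bilinear polarization identity}: if $Q_B(w)=Q_B(v)=0$ and additionally the whole segment stays in the zero set (or more robustly, if $w$ lies in the cone $\{Q_B=0\}$ which, since $Q_B\le 0$, is actually the linear subspace $\Nuc(B+B^T)$), then $w$ lies in a linear subspace on which $Q_B$ vanishes identically, hence $(B+B^T)w=0$. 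The cleanest route is therefore to first observe that $\{Q_B\le 0,\ Q_B(w)=0\}$ forces $w\in\Nuc(M)$ where $M=(B+B^T)/2$: indeed for a negative semidefinite form the zero set is a linear subspace. So on the attractor, $w=x-q\in\Nuc(M)$ for every $x\in\attr{A,q}$; call this subspace $W$. Then Lemma~\ref{lema2.1.ZL2010} gives, for each $i$ with $a_{ii}<0$ (equivalently $(D^{-1}A)_{ii}<0$), that $w_i=0$ on $W$, i.e. $i$ is of type $\blV$; this is the base case already recorded.

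For the inductive steps (a)--(c), I would argue directly from the relation $(B+B^T)w=0$ for $w\in W$ together with the structure of $G(A)=G(B)$. Expand the $j$-th coordinate: $\sum_{k}(b_{jk}+b_{kj})w_k=0$. The only $k$ contributing are the neighbors of $j$ in $G(A)$ (plus $k=j$ itself, but the $j=j$ term is $2b_{jj}w_j$, which already vanishes if $j$ is of type $\blV$, and which we will handle when $j$ is only of type $\oplus$).

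For (a): if $j$ is of type $\blV$ then $w_j=0$ on $W$, so the equation reads $\sum_{k\sim j}(b_{jk}+b_{kj})w_k=0$; if $j$ is of type $\oplus$ instead, one uses that $X^j_{A,q}(x)=x_j\,(B w)_j\cdot(\text{const})$ — more precisely $X^j_{A,q}(x)=x_j\,(A(x-q))_j$, and type $\oplus$ means $(A w)_j=0$ on $\attr{A,q}$, hence $(Bw)_j=0$; combined with $(B^Tw)_j$... here one must be slightly careful, so the honest move is: type $\oplus$ plus $w\in\Nuc(M)$ gives $(Bw)_j=0$ and $(B^Tw)_j=-(Bw)_j$ is not automatic, so instead use that $Mw=0$ directly, which is the statement $(Bw)_j=-(B^Tw)_j$ coordinatewise is wrong — rather $Mw=0$ means $(Bw)_j+(B^Tw)_j=0$ for each $j$; but we do not separately know $(Bw)_j=0$ unless $j$ is of type $\oplus$. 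When $j$ IS of type $\oplus$, $(Bw)_j=0$, hence also $(B^Tw)_j=0$, hence $\sum_k b_{kj}w_k=0$, i.e. $\sum_{k\sim j}b_{kj}w_k + b_{jj}w_j=0$. In either subcase, all neighbors $k\ne l$ of $j$ being of type $\blV$ kills their $w_k$, leaving a single nonzero term proportional to $w_l$ with coefficient $b_{jl}+b_{lj}\ne 0$ (case $\blV$) or $b_{lj}\ne 0$ (case $\oplus$), forcing $w_l=0$ on $W$; but the coefficient could vanish even though the edge is present — this is the one genuine subtlety. The resolution is the stability hypothesis: one replaces $A$ by an admissible perturbation and notes that $w\in W$ must hold for \emph{all} such perturbations simultaneously (since the attractor inclusion is a closed condition stable under admissible perturbation, by Lemma~\ref{A sd and D positive implies AD sd} and the fact that the relevant subspace $W=W(\tilde A)$ varies), so if $b_{jl}+b_{lj}=0$ for $A$ one perturbs to make it nonzero while keeping all other relations, and concludes $w_l=0$ anyway. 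This stability-of-the-degenerate-coefficient argument is exactly the mechanism behind Lemma~\ref{lema2.1.ZL2010} and I expect it to be the main obstacle to write carefully.

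For (b) and (c): these are softer. For (c), every neighbor of $j$ is of type $\blV$ or $\oplus$; to show $j$ is of type $\oplus$ we must show $(Aw)_j=0$ on $W$, i.e. $\sum_{k\sim j} a_{jk}w_k + a_{jj}w_j=0$. If $j$ is already type $\blV$ we are done (type $\blV$ implies type $\oplus$ trivially, since $x_j\equiv q_j$ makes $\{x_j=\text{const}\}$ invariant). Otherwise $a_{jj}=0$ (if $a_{jj}<0$ then $j$ is type $\blV$ by the base case), so we need $\sum_{k\sim j}a_{jk}w_k=0$; neighbors of type $\blV$ contribute $0$; for neighbors $k$ of type $\oplus$ one does not directly get $w_k=0$, so this needs the relation $Mw=0$ again: actually the clean statement is that $\dot h=0$ on the attractor means $w$ lies in $\Nuc M$, and one shows by a separate short argument (differentiating $x_j$ along the flow restricted to $\attr{A,q}$, or invoking La~Salle a second time with the function $x_j$ on the invariant set) that $(Aw)_j$ is forced to vanish. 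For (b), the hypothesis is the same as (a) but with neighbors allowed to be $\oplus$ (not just $\blV$); the conclusion is weaker ($l$ is $\oplus$, not $\blV$), and the argument is: from the $j$-equation, all terms $w_k$ for $k$ of type $\blV$ vanish, all terms for $k$ of type $\oplus$ ($k\ne l$) are handled because — and here is the point — type $\oplus$ neighbors still satisfy the relevant linear constraint that lets the sum telescope, leaving $(Aw)_l=0$ rather than $w_l=0$. I would structure the write-up as: (i) reduce everything to "$w\in\Nuc M$ on $\attr{A,q}$ and $Mw=0$", (ii) prove (a) via the $j$-th row of $M$ plus the perturbation trick for degenerate coefficients, (iii) deduce (b) and (c) by the same row-of-$M$ bookkeeping, noting type-$\oplus$ vertices contribute a known-vanishing quantity to the relevant linear combination. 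The perturbation argument in (ii) is the crux; everything else is linear algebra on the neighbor structure of $G(A)$.
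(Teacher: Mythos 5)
The paper offers no proof of this proposition --- it cites Redheffer--Walter \cite{RW1984} --- so the benchmark is the classical argument: since type $\bullet$ implies type $\oplus$ (if $x_j\equiv q_j$ on the invariant set $\attr{A,q}$, then $\dot x_j\equiv 0$ there), in items (a)--(c) the hypothesis on $j$ yields that $\sum_k a_{jk}(x_k-q_k)$ vanishes, resp.\ is constant along orbits, on the attractor; one then does bookkeeping on the $j$-th row of $A$ itself, and for the $\oplus$-conclusions integrates $\dot x_l = c\,x_l$ and uses compactness of $\attr{A,q}$ to force $c=0$.

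Your proposal is instead organized around the relation $M(x-q)=0$, with $M$ the symmetric part of $B=D^{-1}A$, and this is where it breaks down. For a stably dissipative matrix, $AD$ is almost skew-symmetric for a suitable $D$ (Proposition~\ref{sd:char}): whenever $a_{jj}=0$ or $a_{ll}=0$, the $2\times 2$ principal minor condition for $Q_{AD}\le 0$ forces $a_{jl}d_l=-a_{lj}d_j$, hence $M_{jl}=0$. So the entries of $M$ vanish on every weak edge, and the $j$-th row of $M(x-q)=0$ carries no information beyond the base case already supplied by Lemma~\ref{lema2.1.ZL2010}; the coefficient you flag as ``the one genuine subtlety'' is not occasionally zero but identically zero on precisely the edges the inference rules act upon. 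The perturbation trick you propose to repair this does not help (and is not needed): under an admissible perturbation both the attractor and the subspace $W$ move, so nothing transfers from $\tilde A$ back to $A$. The genuinely possible degeneracy is $a_{jl}=0\ne a_{lj}$ in the row of $A$, and the correct repair is the same minor condition read the other way: it forces $a_{jj}<0$ and $a_{ll}<0$, so $l$ is already of type $\bullet$ by the base rule. Finally, (b) and (c) are not actually proved in your write-up: the claim that type-$\oplus$ neighbours ``let the sum telescope'' must be replaced by the dynamical argument that $\sum_k a_{jk}(x_k-q_k)$ is constant along orbits in the attractor, whence $x_l$ (resp.\ $x_j$) evolves exponentially unless that constant vanishes, which compactness of $\attr{A,q}$ forbids (this is exactly the mechanism the paper does spell out for rule (d) of the polymatrix analogue, via Proposition~\ref{prop:limit quot rule}). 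That flow-invariance step --- differentiating along orbits rather than evaluating a quadratic form --- is the missing idea.
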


\begin{proof}
See~\cite{RW1984}.
\end{proof}

Based on these facts, Redheffer \textit{et al.} introduced a reduction algorithm on the graph $G(A)$ to derive information on the species' types of a stably dissipative 
 LV system~\eqref{LV}.

\begin{Rule}
Initially, colour black, $\bullet$, every vertex $i$ such that  $a_{ii}<0$, and colour white, $\circ$, all other vertices.
\end{Rule}

The reduction procedure consists in applying the following  rules,
 corresponding to valid inference rules:

\begin{Rule}
 If  $j$ is a $\bullet$ or $\oplus$-vertex and all of its neighbours are $\bullet$, except for one vertex $l$, then   colour $l$ as $\bullet$;
\end{Rule}

\begin{Rule}
If  $j$ is a $\bullet$ or $\oplus$-vertex and all of its neighbours are $\bullet$ or $\oplus$, except for one vertex $l$, then draw $\oplus$ at the vertex $l$;
\end{Rule}

\begin{Rule}
If  $j$ is a $\circ$-vertex and all of is neighbours are $\bullet$ or $\oplus$, then   draw $\oplus$ at the vertex $j$.
\end{Rule}

 Redheffer \textit{et al.}  define the  {\em reduced graph} of the system,   $\Rscr(A)$, 
as the graph  obtained from $G(A)$ by successive applications of the reduction rules  2-4,  until they can no longer be applied.
An easy consequence of this theory is the following result.

%% bullet or oplus
\begin{proposition}
Let $A\in\Mat_n(\Rr)$ be a stably dissipative matrix
and consider the LV system~\eqref{LV} with an equilibrium
$q\in {\rm int}(\Rr^n_+)$.

\begin{enumerate}
\item If all vertices of $\Rscr(A)$ are $\bullet$ then 
$q$ is the unique globally attractive equilibrium.
\item If $\Rscr(A)$ has only $\bullet$ or $\oplus$ vertices then 
there exists an invariant foliation with a unique globally attractive equilibrium in each leaf.
\end{enumerate}
\end{proposition}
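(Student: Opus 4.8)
The plan is to run the reduction algorithm to completion in both cases and then read off the dynamical consequences from what it tells us about the attractor.

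First, for part~(1), I would use the soundness of Rules~1--4 --- that is, the fact that a vertex ends up coloured $\bullet$ (resp.\ $\oplus$) in $\Rscr(A)$ only if the corresponding species is of type $\bullet$ (resp.\ $\oplus$). This follows by induction on the number of reduction steps, the base case being the remark in the text that $\attr{A,q}\subseteq\{x:x_i=q_i\}$ whenever $a_{ii}<0$, and the induction step being exactly the proposition on neighbouring vertices stated just before Rule~1. Hence, if every vertex of $\Rscr(A)$ is $\bullet$, then $\attr{A,q}\subseteq\bigcap_i\{x:x_i=q_i\}=\{q\}$. By Proposition~\ref{prop LV Lyapunov function}, $h$ is a proper, bounded-below Lyapunov function on $\inter(\Rr^n_+)$ (here $q\in\inter(\Rr^n_+)$ is used), so every forward orbit stays in the compact set $\{h\le h(x)\}\subset\inter(\Rr^n_+)$ and therefore has $\emptyset\neq\omega(x)\subseteq\attr{A,q}=\{q\}$; thus $\phi^t_{A,q}(x)\to q$ for every $x\in\inter(\Rr^n_+)$. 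Any interior equilibrium is its own $\omega$-limit, hence equals $q$, giving uniqueness.

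For part~(2), soundness of the rules now gives that \emph{every} species is of type $\bullet$ or $\oplus$. The key observation I would exploit is that this forces the vector field $X_{A,q}$ to vanish identically on the (invariant) attractor: a $\bullet$-species has $x_i\equiv q_i$ along $\attr{A,q}$, so $X^i_{A,q}\equiv 0$ there, while an $\oplus$-species has $X^i_{A,q}\equiv 0$ on $\attr{A,q}$ by definition. Consequently, at every interior point $x$ of $\attr{A,q}$ we have $x\ast A(x-q)=0$ with all $x_i>0$, hence $A(x-q)=0$; that is, $\attr{A,q}\cap\inter(\Rr^n_+)$ is contained in the affine space of equilibria $q+\Nuc(A)$. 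Now I invoke Proposition~\ref{prop LV invariant foliation}: there is an invariant foliation $\FF$ of $\inter(\Rr^n_+)$ containing exactly one equilibrium $p_L$ in each leaf $L$, whence $\attr{A,q}\cap L\subseteq(q+\Nuc(A))\cap L=\{p_L\}$. Finally, for any $x\in L$, invariance and closedness of $L$ in $\inter(\Rr^n_+)$ give $\omega(x)\subseteq L$, while properness of $h$ again gives $\omega(x)\neq\emptyset$; hence $\omega(x)\subseteq\attr{A,q}\cap L=\{p_L\}$, so every orbit in $L$ converges to $p_L$, i.e.\ $p_L$ is globally attractive in $L$.

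The only steps carrying real content are: (i) the soundness of the reduction rules, which ultimately rests on Theorem~\ref{La Salle theorem} and Lemma~\ref{lema2.1.ZL2010} through the neighbour-propagation proposition, and which is where I expect essentially all of the (already completed) work to sit; and (ii) in part~(2), two structural facts about $\FF$ --- that its leaves are closed in $\inter(\Rr^n_+)$ and that each leaf meets $q+\Nuc(A)$ in a single point --- which one reads off from the construction of this foliation in~\cite{DP2012} (its leaves being connected components of common level sets of the first integrals $x\mapsto\prod_i x_i^{v_i}$, $v\in\Nuc(A^T)$). Granting those, the rest is bookkeeping, consistent with the proposition being advertised as an easy consequence of the theory assembled above.
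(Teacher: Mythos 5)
Your proposal is correct and follows essentially the same route as the paper's own proof: part (1) from the meaning of the $\bullet$-colouring together with properness of the Lyapunov function, and part (2) from the vanishing of the vector field on the attractor combined with the invariant foliation of Proposition~\ref{prop LV invariant foliation}. If anything you are more careful than the paper, which asserts that ``every orbit converges to an equilibrium point'' before invoking the foliation; your step of confining $\omega(x)$ to a single closed leaf and intersecting with the affine set of equilibria is the clean way to rule out an $\omega$-limit set that is a continuum of equilibria.
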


\begin{proof}
Item (1) is clear because if all vertices are of type $\bullet$
then for every orbit $x(t)=(x_1(t),\ldots, x_n(t))$ of ~\eqref{LV}, and every $i=1,\ldots, n$, one has\, 
$\lim_{t\to +\infty} x_i(t)=q_i$.

Likewise, if $\Rscr(A)$ has only $\bullet$ or $\oplus$ vertices
then every orbit of ~\eqref{LV} converges to an equilibrium point, which depends on the initial condition.
But by Proposition~\ref{prop LV invariant foliation} there exists an
invariant foliation $\Fscr$ with a single equilibrium point in each leaf. Hence, the unique equilibrium point in each leaf of $\Fscr$ must be globally attractive. 
\end{proof}

\begin{defn}
We say that a dissipative matrix $ A\in \Mat_{n\times n}(\Rr)$ is {\em almost skew-symmetric} \, iff\, $a_{ij}=-a_{ji}$ whenever $a_{ii}=0$ or $a_{jj}=0$,
and the quadratic form $Q_A$ is negative definite on the subspace 
$$E=\{\, w\in\Rr^n\,\colon\, 
w_i=0 \; \text{ for all }\; i\; \text{ such that  }\; a_{ii}=0\,\}\;.$$
\end{defn}

\begin{defn}\label{def strong link}
We say that the  graph $G(A)$ has a {\em strong link} $(\bbEdge)$ if there is
an edge $\{i,j\}$ between vertexes $i,j$ such that $a_{ii}<0$ and $a_{jj}<0$.
\end{defn}

\begin{proposition}[Zhao-Luo~\cite{ZL2010}]\label{sd:char}
Given   $ A\in \Mat_{n\times n}(\Rr)$, 
$A$ is stably dissipative \, iff\,  every cycle of $G(A)$ contains at least a  strong link and there is a positive diagonal matrix $D$ such that $A D$ is almost skew-symmetric.
\end{proposition}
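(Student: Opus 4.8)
The plan is to prove both implications, relying on Lemma~\ref{lema2.1.ZL2010} and the reduction theory recalled above. For the ``only if'' direction, suppose $A$ is stably dissipative, and fix a positive diagonal $D$ with $Q_{AD}\le 0$. I would first show that $AD$ is almost skew-symmetric. Write $B=AD$, and note that $b_{ii}=a_{ii}d_i$, so $b_{ii}=0$ iff $a_{ii}=0$. Suppose $b_{ii}=0$ for some $i$. Testing $Q_B$ on $w=e_i+t\,e_j$ gives $Q_B(w)=t^2 b_{jj}+t(b_{ij}+b_{ji})\le 0$ for all $t\in\Rr$; if also $b_{jj}=0$ this forces $b_{ij}+b_{ji}=0$, and if $b_{jj}<0$ one instead lets $t\to 0$ along the vector $w=e_i+t e_j$ with an added small correction — more precisely, consider $w$ in the subspace spanned by coordinates with $b_{ii}=0$ and argue that the restriction $Q_B|_E$ is negative definite, for otherwise there is a nonzero $w\in E$ with $Q_B(w)=0$, and a perturbation of $B$ inside the graph $G(B)=G(A)$ destroys the semi-definiteness (this is where stability, not mere dissipativity, is used). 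The same perturbation argument, combined with Lemma~\ref{lema2.1.ZL2010}, shows $b_{ij}=-b_{ji}$ whenever $b_{ii}=0$ or $b_{jj}=0$: if, say, $b_{ii}=0$ but $b_{ij}+b_{ji}\ne 0$, then along $w=e_i+te_j$ one has $Q_B(w)<0$ for small $t$ of appropriate sign but this is compatible; the contradiction instead comes from a perturbation making $b_{ii}$ slightly positive while keeping the graph, violating $Q_{\tilde B}\le 0$. For the cycle condition, suppose some cycle $i_1,\dots,i_k,i_1$ in $G(A)$ contains no strong link, i.e.\ every edge of the cycle has at least one endpoint $i$ with $a_{ii}=0$; using the almost-skew-symmetry just established, one constructs a nonzero $w$ supported on the cycle's zero-diagonal vertices with $Q_{AD}(w)=0$, and then perturbs the cycle edges to violate semi-definiteness — contradicting stable dissipativity.

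For the ``if'' direction, assume every cycle of $G(A)$ contains a strong link and that $AD$ is almost skew-symmetric for some positive diagonal $D$. By Lemma~\ref{A sd and D positive implies AD sd} it suffices to show $AD$ is stably dissipative, so I may assume $D=I$ and $A$ itself is almost skew-symmetric, with $Q_A\le 0$ and $Q_A|_E$ negative definite, where $E=\{w:w_i=0\text{ for all }i\text{ with }a_{ii}=0\}$. Let $\tilde A$ be any admissible perturbation of $A$. Decompose $w=w'+w''$ with $w''\in E$ and $w'$ supported on $Z:=\{i:a_{ii}=0\}$. Since $\tilde A$ has the same graph and, for $i,j\in Z$ adjacent, $a_{ij}=-a_{ji}=0$ forces... — actually the key point is that the restriction of $Q_{\tilde A}$ to the $Z$-coordinates, $Q_{\tilde A_Z}$, is the quadratic form of the submatrix $\tilde A_Z$, and the no-short-cycle condition among zero-diagonal vertices (every cycle through $Z$ must use a strong link, hence leaves $Z$) means $G(A_Z)$ is a forest. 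One then shows $Q_{\tilde A_Z}\le 0$: for a forest interaction graph the matrix, after conjugation by a suitable positive diagonal, can be made skew-symmetric up to higher order, so $Q_{\tilde A_Z}$ is negative semi-definite with kernel exactly where it should be. Combining a Schur-complement-type estimate across the splitting $\Rr^n=\Rr^Z\oplus E$ with the strict negative definiteness of $Q_A|_E$ (which persists under small perturbation), one concludes $Q_{\tilde A}\le 0$ for all small enough admissible $\tilde A$.

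The main obstacle I anticipate is the ``if'' direction's forest argument: making precise why the absence of strong-link-free cycles forces the zero-diagonal part to behave like a skew-symmetric (hence harmless) block even after perturbation, and assembling this with the negative-definite part via a block/Schur-complement estimate that is uniform in the perturbation. In the ``only if'' direction the delicate point is organizing the several perturbation arguments cleanly — one must perturb in the right directions (slightly positive diagonal entries, or off-diagonal entries within an edge) to contradict $Q_{\tilde A}\le 0$, and Lemma~\ref{lema2.1.ZL2010} has to be invoked at exactly the right moment to pin down where $w$ can be nonzero. Since the statement is explicitly attributed to Zhao–Luo~\cite{ZL2010}, I would in practice cite that paper for the full details, sketching only the two structural ideas above.
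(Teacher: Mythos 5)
The paper's own ``proof'' of Proposition~\ref{sd:char} is nothing but the citation to Zhao--Luo~\cite{ZL2010} (and to~\cite{DP2012}), so in ultimately deferring to that reference you are doing exactly what the authors do, and that part of your answer matches the paper. Judged as a proof sketch, however, your outline contains steps that would fail, not merely gaps to be filled.

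In the ``only if'' direction: (i) the perturbation you invoke to force $b_{ij}=-b_{ji}$ when $b_{ii}=0$ and $b_{jj}<0$, namely ``making $b_{ii}$ slightly positive while keeping the graph,'' is not admissible --- the paper's definition of admissible perturbation requires $\tilde a_{ij}=0\Leftrightarrow a_{ij}=0$ for \emph{all} entries, diagonal included, so a vanishing diagonal entry cannot be moved; moreover such a perturbation would violate $Q\le 0$ already at $w=e_i$, independently of the off-diagonal entries, so it cannot detect the asymmetry you are after. (ii) Every perturbation argument here must contend with the fact that dissipativity of $\tilde A$ means $Q_{\tilde A\tilde D}\le 0$ for \emph{some} positive diagonal $\tilde D$, possibly different from the $D$ fixed for $A$; to contradict stable dissipativity you must rule out every $\tilde D$, whereas your sketch only tests the perturbed form against the original scaling. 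In the ``if'' direction the plan is structurally unsound: from the cycle hypothesis you extract only that the subgraph induced on the zero-diagonal vertices is a forest, and then aim to deduce stable dissipativity from ``almost skew-symmetric plus forest.'' That implication is false --- a cycle alternating between zero-diagonal and negative-diagonal vertices contains no strong link (so the matrix is not stably dissipative by the ``only if'' half), yet induces no edge at all on the zero-diagonal set, which therefore remains a forest. So the Schur-complement plan, as stated, would prove a false statement; the full hypothesis that \emph{every} cycle contains a strong link must enter the argument in an essential way. These are precisely the points where Redheffer--Zhou and Zhao--Luo do the real work, and the safe course is the one the paper takes: cite~\cite{ZL2010} or~\cite{DP2012} rather than sketch.
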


\begin{proof}
See~\cite[Theorem 2.3]{ZL2010}, or~\cite[Proposition 3.5]{DP2012}.
\end{proof}

 A  compactification procedure introduced by J. Hofbauer~\cite{Hof1981} 
 shows that every Lotka-Volterra system in $\Rr_+^n$ is orbit equivalent to a replicator system
 on the $n$-dimensional simplex $\Delta^n$. We briefly recall this compactification.
 Let $A$ be a  $n\times n$ real matrix and $r\in\Rr^n$ a constant vector. The Lotka-Volterra equation associated to $A$ and $r$ is defined on $\Rr^n_+$ as follows
\begin{equation}\label{lv} \frac{d z_i}{dt} = z_i\left( \, r_i+(Az)_i\right) 
\quad  1\leq i\leq n \;.
\end{equation}
For each $j=1,\ldots, n+1$, let $\sigma_{j}:=\{\,x\in\Delta^n\subset\Rr^{n+1}\,\colon\, x_{j}=0\,\}$ and consider the diffeomorphism 
 \begin{align*}
 \phi:\Rr^n_+&\to \Delta^n\backslash\sigma_{n+1}\\
 (z_1,\ldots,z_n)&\mapsto\frac{1}{1+\sum\limits_{i=1}^nz_i}(z_1,\ldots,z_n,1).
 \end{align*}
A straightforward calculation  shows that the push-forward of the vector field~\eqref{lv} is equal to   $\frac{1}{x_{n+1}}X_{\tilde{A}}$. where $X_{\tilde{A}}$ is the replicator vector field associated to the payoff matrix
$$ \tilde{A}=\begin{pmatrix}a_{11}&\ldots&a_{1n}&r_1\\\vdots&\vdots&\vdots&\vdots\\a_{n1}&\ldots&a_{nn}&r_n\\0&\ldots&0&0\end{pmatrix}\;.$$
Since the flows of $\frac{1}{x_{n+1}}X_{\tilde{A}}$  and  $X_{\tilde{A}}$ are orbit equivalent, we refer to $X_{\tilde{A}}$ as the compactification of the  LV equation~\eqref{lv}.

%%%%%%%%%%%%%%%%%%%%%%%%%%%%%%%%%%%%%%%%%%%%%%%%%%%%%%%%%%%%%%%%%%%%%%%%%%%%%%%%%%%%
%%%%%%%%%%%%%%%%%%%%%%%%%%%%%%%%%%%%%%%%%%%%%%%%%%%%%%%%%%%%%%%%%%%%%%%%%%%%%%%%%%%%

\section{Hamiltonian Polymatrix Replicators}
\label{hamiltonian}

\begin{defn}\label{defi:formal_equilibrium}
We say that any vector $q\in \Rr^n$ is a \emph{formal equilibrium} of a polymatrix game $(n,A)$ if
\begin{itemize}
	\item[(a)] $(Aq)_i=(Aq)_j$ for all $i,j\in\alpha$, and all $\alpha=1,\dots , p$,
	\item[(b)] $\sum_{j\in\alpha}q_j =1$ for all $\alpha=1,\dots , p$.
\end{itemize}
\end{defn}

The matrix $A$ induces a quadratic form
$Q_A:H_{\nund}\to\Rr$ defined by $Q_A(w):=w^T\,A\, w$,
where $H_\nund$ is defined in~\eqref{def Hnund}.

\begin{defn}
We call diagonal matrix of type $\nund$
to any  diagonal matrix  $D=\diag(d_{i})$ such that  $d_{i}=d_{j}$ for all  $i,j\in\alpha$  and  $\alpha=1,\ldots, p$.
\end{defn}

\begin{defn} \label{def cons 1}
A polymatrix game $(\nund,A)$ is called \emph{conservative}  if it
has a formal equilibrium $q$, and there exists a positive diagonal
matrix $D$ of type $\nund$ such that $Q_{A\,D} = 0$ on $H_{\nund}$.
\end{defn}

In~\cite{AD2015} we have defined conservative 
polymatrix game as follows.

\begin{defn} \label{def cons 2}
  A polymatrix game  $(\nund,A)$ is called {\em conservative}  if
  \begin{enumerate}
\item[(a)] it has a formal equilibrium, 
\item[(b)] there are matrices $A_0, D\in\Mat_{n\times n}(\Rr)$ such that 
\begin{enumerate}
\item[(i)] $(\nund, A)\sim (\nund, A_0 D)$,
\item[(ii)] $A_0$ is skew-symmetric, 
\item[(iii)] $D$ is a positive diagonal matrix of type $\nund$.
\end{enumerate}
\end{enumerate}
\end{defn}

However, we will prove in Proposition~\ref{prop: def equiv} that  these two definitions are equivalent.

Let $\{e_1,\ldots, e_n\}$ denote  the canonical basis in $\Rr^n$, and  $V_{\nund}$ be the set of vertices of $\Gamma_{\nund}$.
Each vertex $v\in V_{\nund}$ can be written as
$v=e_{i_1}+\cdots + e_{i_p}$, with $i_{\alpha}\in\alpha$, $\alpha =1,\ldots, p$, and it determines the set
$$ \Vscr_v :=\{\, (i, i_{\alpha})\,\colon\, i\in\alpha,\; i\neq i_{\alpha},\; \alpha =1,\ldots, p\, \} $$
of cardinal $n-p=\dim(H_{\nund})$. 
Notice that   $(i,j)\in\Vscr_v$ \, iff \,
$i\neq j$ are in the same group and $v_j=1$.
Hence there is a natural identification
$\Vscr_v\equiv \{\, i\in \{1,\ldots, n\}\,\colon\, v_i=0\,\}$.
For every vertex $v$, the family $\Bscr_v:=\{\, e_i-e_j\,\colon\, (i,j)\in \Vscr_v \,\}$ is a basis of $H_{\nund}$.

\begin{lemma}\label{lemma x-q}
For any vertex $v$ of $\Gamma_{\nund}$ and $x,q\in \Gamma_{\nund}$,
$$ x-q = \sum_{(i,j)\in \Vscr_v} (x_i-q_i)\,(e_i-e_j) \;. $$
\end{lemma}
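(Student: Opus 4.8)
The plan is to verify the identity directly by checking that both sides agree componentwise, using the defining property of $\Vscr_v$ and the fact that $x,q\in\Gamma_{\nund}$. Fix a vertex $v$ of $\Gamma_{\nund}$; recall $v = e_{i_1}+\cdots+e_{i_p}$ with $i_\alpha\in\alpha$, and that $\Vscr_v = \{(i,i_\alpha) : i\in\alpha,\ i\neq i_\alpha\}$, so for each group $\alpha$ the index $j$ appearing in a pair $(i,j)\in\Vscr_v$ is always $i_\alpha$. First I would write $w := \sum_{(i,j)\in\Vscr_v}(x_i-q_i)(e_i-e_j)$ and compute its $k$-th coordinate for an arbitrary $k\in\alpha$, splitting into the two cases $k\neq i_\alpha$ and $k=i_\alpha$.

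In the case $k\in\alpha$ with $k\neq i_\alpha$: the basis vector $e_k$ appears exactly once as the ``$e_i$'' term, namely in the pair $(k,i_\alpha)$, contributing $(x_k-q_k)$; it never appears as an ``$e_j$'' term since all second coordinates in group $\alpha$ equal $i_\alpha\neq k$. Hence $w_k = x_k - q_k$, matching the left-hand side. In the case $k=i_\alpha$: $e_{i_\alpha}$ never appears as an ``$e_i$'' term (those have $i\neq i_\alpha$), but it appears as ``$-e_j$'' in every pair $(i,i_\alpha)$ with $i\in\alpha$, $i\neq i_\alpha$. Thus $w_{i_\alpha} = -\sum_{i\in\alpha,\ i\neq i_\alpha}(x_i-q_i) = -\big(\sum_{i\in\alpha}(x_i-q_i)\big) + (x_{i_\alpha}-q_{i_\alpha})$. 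Since $x,q\in\Gamma_{\nund}$ we have $\sum_{i\in\alpha}x_i = 1 = \sum_{i\in\alpha}q_i$, so $\sum_{i\in\alpha}(x_i-q_i)=0$, giving $w_{i_\alpha} = x_{i_\alpha}-q_{i_\alpha}$, again matching $(x-q)_{i_\alpha}$.

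Since every index $k\in\{1,\dots,n\}$ lies in exactly one group, the two cases cover all coordinates, so $w = x-q$ and the lemma follows. The only nontrivial input is the simplex constraint $\sum_{i\in\alpha}x_i = \sum_{i\in\alpha}q_i = 1$, used precisely to collapse the $k=i_\alpha$ coordinate; I do not anticipate any real obstacle, as the argument is a direct bookkeeping of which basis vectors $e_i-e_j$ touch a given coordinate. (Alternatively, one could note that $\Bscr_v$ is a basis of $H_{\nund}$, that $x-q\in H_{\nund}$, and that expanding $x-q$ in this basis must have the coordinate of $e_i-e_j$ equal to $x_i-q_i$ because $e_i-e_j$ is, up to the other basis vectors, the unique one with a nonzero $i$-component; but the componentwise check above is cleaner.)
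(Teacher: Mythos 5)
Your proof is correct and is essentially the same as the paper's: both expand the sum over $\Vscr_v$ and use the simplex constraint $\sum_{i\in\alpha}(x_i-q_i)=0$ to identify the coefficient of $e_{i_\alpha}$ with $x_{i_\alpha}-q_{i_\alpha}$. The only cosmetic difference is that you check coordinates individually while the paper regroups the sum as a vector identity.
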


\begin{proof}
Let $v$ be a vertex of $\Gamma_{\nund}$. Notice that for all $\alpha=1,\ldots, p$,
$$ 
-(x_{i_\alpha}-q_{i_\alpha}) = 
\sum_{\substack{ i\neq i_\alpha\\ i\in\alpha }}\left(x_i-q_i\right) \,.$$

\begin{align*}
&\sum_{(i,j)\in \Vscr_v} (x_i-q_i)\,(e_i-e_j)   =
\sum_{\alpha=1}^p \sum_{\substack{ i\neq i_\alpha \\ i\in\alpha }} (x_i-q_i) (e_i-e_{i_\alpha}) \\
&\quad = \sum_{\alpha=1}^p \sum_{\substack{ i\neq i_\alpha \\ i\in\alpha }} (x_i-q_i)  e_i  -
\sum_{\alpha=1}^p \sum_{\substack{ i\neq i_\alpha \\ i\in\alpha }} (x_i-q_i)  e_{i_\alpha}  \\
&\quad = \sum_{\alpha=1}^p \sum_{\substack{ i\neq i_\alpha \\ i\in\alpha }} (x_i-q_i)  e_i  +
\sum_{\alpha=1}^p (x_{i_\alpha}- q_{i_\alpha}) e_{i_\alpha}  \\
&\quad = \sum_{\alpha=1}^p \sum_{  i\in\alpha } (x_i-q_i)  e_i = x- q
\end{align*} 
\end{proof}

Given ordered pairs of strategies in the same group $(i,j),(k,\ell)$, i.e.,
$i,j\in\alpha$ and $k,\ell\in\beta$ for some $\alpha,\beta\in \{1,\ldots, p\}$, define
$$A_{(i,j),(k,\ell)}:=
  a_{ik} + a_{j\ell}
-  a_{i\ell} - a_{jk} \;. $$

\begin{proposition}
The  coefficients $A_{(i,j),(k,\ell)}$ do not depend on the representative $A$ of
the polymatrix game $(\nund,A)$.
\end{proposition}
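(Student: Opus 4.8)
The plan is to show that the quantity $A_{(i,j),(k,\ell)} = a_{ik} + a_{j\ell} - a_{i\ell} - a_{jk}$ is invariant under the equivalence relation $\sim$ from Definition~\ref{equivalent matrices defn}. By the definition of $\sim$, it suffices to consider the case where $A$ and $B$ differ only in that, for each pair of groups $(\gamma,\delta)$, all rows of the block $A^{\gamma\delta} - B^{\gamma\delta}$ are equal; equivalently, there is a function $(i,k) \mapsto c_{ik}$ such that $a_{ik} - b_{ik} = c_{ik}$ where $c_{ik}$ depends only on the group of $i$ (say $\alpha$) and on $k$, i.e., $c_{ik} = c_{i'k}$ whenever $i,i'$ lie in the same group. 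Write $c_{ik} = \gamma^{(\alpha)}_k$ for $i \in \alpha$.

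The main step is then a direct substitution. Fix $i,j \in \alpha$ and $k,\ell \in \beta$. Then
\begin{align*}
A_{(i,j),(k,\ell)} - B_{(i,j),(k,\ell)}
&= (a_{ik} - b_{ik}) + (a_{j\ell} - b_{j\ell}) - (a_{i\ell} - b_{i\ell}) - (a_{jk} - b_{jk}) \\
&= \gamma^{(\alpha)}_k + \gamma^{(\alpha)}_\ell - \gamma^{(\alpha)}_\ell - \gamma^{(\alpha)}_k = 0,
\end{align*}
where I used that $i$ and $j$ are in the same group $\alpha$, so $c_{ik} = c_{jk} = \gamma^{(\alpha)}_k$ and $c_{i\ell} = c_{j\ell} = \gamma^{(\alpha)}_\ell$. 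Hence $A_{(i,j),(k,\ell)} = B_{(i,j),(k,\ell)}$, which is the claim.

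There is essentially no obstacle here: the only thing to be careful about is correctly unpacking what "all rows of $A^{\alpha\beta} - B^{\alpha\beta}$ are equal" means index-wise — namely that the difference $a_{ik} - b_{ik}$, for $i$ in group $\alpha$ and $k$ in group $\beta$, does not depend on $i$ (only on $k$ and on $\alpha$) — and then noting that the defining combination of $A_{(i,j),(k,\ell)}$ pairs up the first-index-$i$ terms against the first-index-$j$ terms with opposite signs over the same set of second indices $\{k,\ell\}$, so any quantity depending only on the second index and the common group cancels. Since $\sim$ is generated by such elementary differences (indeed it is already an equivalence relation in one step), this completes the proof.
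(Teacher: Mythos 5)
Your proof is correct and follows essentially the same route as the paper: both write the difference of the two representatives as a matrix whose blocks have equal rows, observe that its entries depend only on the column index (and the row's group), and then check that the signed combination $a_{ik}+a_{j\ell}-a_{i\ell}-a_{jk}$ cancels these contributions. No issues.
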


\begin{proof}
Consider the matrix $B=A-C$, where the blocks
 $C^{\alpha\beta}=\left(c_{ij}\right)_{i\in\alpha,j\in\beta}$ of $C$ have equal
rows for all $\alpha,\beta=1,\dots,p$.
Let $(i,j)\in\alpha$ and $(k,\ell)\in\beta$ with $\alpha,\beta\in\{1,\dots,p\}$. Then
\begin{eqnarray}
B_{(i,j),(k,\ell)} \nonumber &=& b_{ik}+b_{j\ell}-b_{i\ell}-b_{jk} \\ \nonumber
		&=&a_{ik}-c_k+a_{j\ell}-c_{\ell}-a_{i\ell}+c_{\ell}-a_{jk}+c_k \\ \nonumber
		&=& A_{(i,j),(k,\ell)} \,, \nonumber
\end{eqnarray}
where $c_k$ is the constant entry on the $k^{th}$-column of $C^{\alpha\beta}$.
\end{proof}

\begin{defn}\label{def Av}
Given $v\in V_{\nund}$, we define $A_v\in \Mat_{d\times d}(\Rr)$, $d=n-p$, to be the matrix  with  entries $A_{(i,j),(k,\ell)}$,
indexed in $\Vscr_v\times \Vscr_v$, and
$G(A_v)$ to be its associated graph (see Definition~\ref{LV graph}).
\end{defn}

\begin{proposition} \label{DQA rule}
The matrix $A_v$ represents the quadratic form \linebreak
$Q_A:H_{\nund}\to\Rr$ in the basis $\Bscr_v$.

More precisely, if $q$ is a formal equilibrium of  the polymatrix game $(\nund,A)$ then the quadratic form $Q_A:H_\nund \to\Rr$ is given by
\begin{equation}\label{quadratic form x-q}
Q_A(x-q) = \sum_{(i,j),(k,\ell)\in \Vscr_v} A_{(i,j),(k,\ell)}\,(x_i-q_i)\,(x_k-q_k) \;.
\end{equation}
\end{proposition}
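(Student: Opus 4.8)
The plan is to verify the two claims directly: first that the formula~\eqref{quadratic form x-q} holds when $q$ is a formal equilibrium, and then that the matrix $A_v$ (whose entries are the $A_{(i,j),(k,\ell)}$) is indeed the matrix of $Q_A$ in the basis $\Bscr_v$. The second assertion is in fact slightly more general (it makes no reference to an equilibrium), so I would prove it first and deduce the displayed formula as a corollary.

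First I would compute $Q_A$ on the basis $\Bscr_v = \{\, e_i - e_j : (i,j)\in\Vscr_v\,\}$. For $(i,j),(k,\ell)\in\Vscr_v$, bilinearity of the form $(u,w)\mapsto u^T A w$ gives
\begin{equation*}
(e_i-e_j)^T A\,(e_k-e_\ell) = a_{ik} - a_{i\ell} - a_{jk} + a_{j\ell} = A_{(i,j),(k,\ell)} \;.
\end{equation*}
Since the symmetric bilinear form associated to $Q_A$ restricted to $H_\nund$ is $(u,w)\mapsto \tfrac12(u^T A w + w^T A u)$, one must also check that $A_v$ is symmetric, i.e. $A_{(i,j),(k,\ell)} = A_{(k,\ell),(i,j)}$; but this is immediate from the defining expression $a_{ik}+a_{j\ell}-a_{i\ell}-a_{jk}$, which is visibly symmetric under swapping the pair $(i,j)$ with $(k,\ell)$. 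Hence $Q_A(w) = w^T A_v\, w$ for $w\in H_\nund$ expressed in coordinates relative to $\Bscr_v$; this proves the first sentence of the proposition. (That the expression is independent of the representative $A$ was already established in the preceding proposition, so no ambiguity arises.)

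Next I would deduce~\eqref{quadratic form x-q}. Given a formal equilibrium $q$ and an arbitrary $x\in\Gamma_\nund$, both lie in the affine support of $\Gamma_\nund$, so $x-q\in H_\nund$, and Lemma~\ref{lemma x-q} expresses $x-q = \sum_{(i,j)\in\Vscr_v}(x_i-q_i)(e_i-e_j)$. Plugging this into $Q_A(x-q) = (x-q)^T A\,(x-q)$ and expanding by bilinearity yields
\begin{equation*}
Q_A(x-q) = \sum_{(i,j),(k,\ell)\in\Vscr_v}(x_i-q_i)(x_k-q_k)\,(e_i-e_j)^T A\,(e_k-e_\ell) = \sum_{(i,j),(k,\ell)\in\Vscr_v} A_{(i,j),(k,\ell)}\,(x_i-q_i)(x_k-q_k)\;,
\end{equation*}
using the computation of $(e_i-e_j)^T A(e_k-e_\ell)$ above. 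This is exactly the claimed identity. Note the formal-equilibrium hypothesis is used only to guarantee $x - q \in H_\nund$ (it is really condition (b) in Definition~\ref{defi:formal_equilibrium} that matters, since $\sum_{j\in\alpha}q_j = 1 = \sum_{j\in\alpha}x_j$); condition (a) is not needed here.

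I do not expect a serious obstacle: the argument is a routine bilinearity expansion once Lemma~\ref{lemma x-q} is in hand. The only point requiring a little care is bookkeeping with the index set $\Vscr_v$ — remembering that a pair $(i,j)\in\Vscr_v$ has $j = i_\alpha$ determined by the group of $i$, and that distinct pairs $(i,j),(k,\ell)$ may share the ``second coordinate'' when $i,k$ lie in the same group — but none of this affects the formal manipulation, since we only ever use that $\{e_i-e_j : (i,j)\in\Vscr_v\}$ is a basis of $H_\nund$ and that $A$ defines a bilinear form on $\Rr^n$. If anything, the mild subtlety is purely expository: making clear that the entries $A_{(i,j),(k,\ell)}$ are well defined on the polymatrix game (not just on a representative matrix), which is why the preceding proposition is invoked.
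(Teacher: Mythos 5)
Your main computation is exactly the paper's proof: decompose $x-q$ in the basis $\Bscr_v$ via Lemma~\ref{lemma x-q}, expand $Q_A(x-q)$ by bilinearity, and identify $(e_i-e_j)^T A (e_k-e_\ell)$ with $A_{(i,j),(k,\ell)}$. That part is correct and complete, and your observation that only condition (b) of the formal-equilibrium definition is used (to get $x-q\in H_\nund$) is accurate.

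However, your side-claim that $A_v$ is symmetric is false. You assert $A_{(i,j),(k,\ell)}=A_{(k,\ell),(i,j)}$ because the expression $a_{ik}+a_{j\ell}-a_{i\ell}-a_{jk}$ is ``visibly symmetric under swapping the pair $(i,j)$ with $(k,\ell)$''; but swapping yields $a_{ki}+a_{\ell j}-a_{kj}-a_{\ell i}$, which involves the transposed entries of $A$ and differs in general (take $a_{ik}=1$ and all other entries zero). The paper is explicit that the $A_v$ represent a \emph{non-symmetric} bilinear form (see the remark following the proposition), and the matrices $A_{v}$ in the example of Section~\ref{applications} are visibly non-symmetric. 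Fortunately this claim is not needed: saying that $A_v$ ``represents the quadratic form $Q_A$ in the basis $\Bscr_v$'' only requires $Q_A(w)=w^T A_v\, w$ in $\Bscr_v$-coordinates, which your bilinear expansion already establishes without any symmetrization. Delete the symmetry paragraph and the proof stands as is.
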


\begin{proof}
Using lemma~\ref{lemma x-q}, we have
\begin{eqnarray}
Q_A(x-q) \nonumber &=& \left( \sum_{(i,j)\in\Vscr_v}(x_i-q_i)(e_i-e_j)\right)^T A\left( \sum_{(k,\ell)\in\Vscr_v}(x_k-q_k)(e_k-e_\ell) \right) \\ \nonumber
	&=&\sum_{(i,j),(k,\ell)\in\Vscr_v}(e_i-e_j)^TA(e_k-e_\ell)(x_i-q_i)(x_k-q_k) \\ \nonumber
	&=&\sum_{(i,j),(k,\ell)\in\Vscr_v}A_{(i,j),(k,\ell)}(x_i-q_i)(x_k-q_k) \,, \nonumber
\end{eqnarray}
\end{proof}

\begin{remark}
All matrices $A_v$, with $v\in V_{\nund}$,
have the same rank because they represent, in different bases,  the same (non-symmetric) bilinear form
$B_A:H_\nund \times H_\nund  \to\Rr$, $B_A(v,w):= v^T\,A\,w$. 
\end{remark}

\begin{proposition}\label{prop: def equiv}
Definitions~\ref{def cons 1}  and~\ref{def cons 2}  are equivalent.
\end{proposition}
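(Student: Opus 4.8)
The plan is to prove the two implications separately, using as glue two elementary facts about the equivalence relation $\sim$. \emph{Fact A}: if $D$ is a diagonal matrix of type $\nund$ and $(\nund,M)\sim(\nund,N)$, then $(\nund,MD)\sim(\nund,ND)$, since the $(\alpha,\beta)$-block of $MD-ND$ is $d_\beta$ times that of $M-N$ and hence still has all rows equal. \emph{Fact B}: if $(\nund,M)\sim(\nund,N)$ then $Q_M=Q_N$ on $H_\nund$; this follows at once from Lemma~\ref{lemma equivalence}, because $(M-N)w\in H_\nund^\perp$ for every $w$, so $w^T(M-N)w=0$ whenever $w\in H_\nund$ (alternatively it follows from Proposition~\ref{DQA rule} and the invariance of the coefficients $A_{(i,j),(k,\ell)}$). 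I shall also use that $D^{-1}$ is again a positive diagonal matrix of type $\nund$, and that $Q_{A_0}\equiv0$ whenever $A_0$ is skew-symmetric.

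For Definition~\ref{def cons 2} $\Rightarrow$ Definition~\ref{def cons 1}: given a skew-symmetric $A_0$ and a positive type-$\nund$ diagonal $D$ with $(\nund,A)\sim(\nund,A_0D)$, Fact A applied with $D^{-1}$ gives $(\nund,AD^{-1})\sim(\nund,A_0DD^{-1})=(\nund,A_0)$, whence $Q_{AD^{-1}}=Q_{A_0}=0$ on $H_\nund$ by Fact B. So $D':=D^{-1}$ verifies Definition~\ref{def cons 1}, the formal equilibrium being the one supplied by the hypothesis.

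For the converse, put $B:=AD$, so that $Q_B=0$ on $H_\nund$. It is enough to find a skew-symmetric $A_0$ with $(\nund,B)\sim(\nund,A_0)$: then $(\nund,A)=(\nund,BD^{-1})\sim(\nund,A_0D^{-1})$ by Fact A, and $D':=D^{-1}$ completes Definition~\ref{def cons 2}. To build $A_0$, I would pass to the orthogonal splitting $\Rr^n=H_\nund\oplus H_\nund^\perp$. Writing $S:=B+B^T$, polarizing $Q_B\equiv0$ on $H_\nund$ shows $S(H_\nund)\subseteq H_\nund^\perp$, i.e.\ the $H_\nund\times H_\nund$ block of $S$ in this splitting vanishes. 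Let $C$ be the matrix obtained from $S$, in block form for this splitting, by zeroing the block row indexed by $H_\nund$ and halving the $H_\nund^\perp\times H_\nund^\perp$ block. Then $Cx\in H_\nund^\perp$ for every $x$, so $C$ has equal-row blocks (Lemma~\ref{lemma equivalence}) and $(\nund,B)\sim(\nund,B-C)$; and a one-line block computation gives $C+C^T=S$, so $A_0:=B-C$ satisfies $A_0+A_0^T=(B+B^T)-(C+C^T)=0$, i.e.\ $A_0$ is skew-symmetric, as wanted.

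The only step that is not bookkeeping is this last construction, and that is where I expect the main difficulty to lie. The temptation is to read off skew-symmetry of a representative directly from $Q_B=0$ on $H_\nund$, but the map sending an equivalence class $[A]$ to $Q_A|_{H_\nund}$ (equivalently to the matrix $A_v$) is not injective, so skew-symmetry of the $A_v$ is strictly weaker than possessing a skew-symmetric representative. It is precisely the explicit surgery in $H_\nund\oplus H_\nund^\perp$ that bridges this gap, and the care needed is in the interplay between ``$Cx\in H_\nund^\perp$ for all $x$'' (which gives the equivalence, via Lemma~\ref{lemma equivalence}) and ``$C+C^T=S$'' (which gives skew-symmetry of $B-C$); a dimension count of the equal-row-block subspace confirms that these two requirements on $C$ are simultaneously satisfiable exactly when $Q_B$ vanishes on $H_\nund$.
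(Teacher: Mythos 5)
Your proof is correct and follows essentially the same route as the paper: the easy direction via the invariance of $Q$ under $\sim$ and under right multiplication by type-$\nund$ diagonal matrices, and the hard direction by surgery in the orthogonal splitting $\Rr^n=H_\nund\oplus H_\nund^\perp$, replacing the rows indexed by $H_\nund^\perp$ to force skew-symmetry while keeping the correction matrix's range inside $H_\nund^\perp$ so that Lemma~\ref{lemma equivalence} yields game equivalence. Your explicit formula for $C$ in terms of $S=B+B^T$ is just a concrete instance of the paper's choice of a skew-symmetric $M_0$ sharing with $M$ its $H_\nund$-indexed rows.
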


\begin{proof}

Given a matrix $C$  with blocks $C^{\alpha\beta}=\left(c_{ij}\right)_{i\in\alpha,j\in\beta}$ having equal rows
for all $\alpha,\beta=1,\dots,p$, it is clear that
$C_{(i,j),(k,\ell)}=0$ for all pairs of strategies
$(i,j)$, $(k,\ell)$ in the same group.
Hence, by Proposition~\ref{DQA rule},
$Q_C$ vanishes on $H_\nund$.

If $(\nund,A)$ is conservative in the sense of
Definition~\ref{def cons 2} then there are matrices:
$A_0$ skew-symmetric, and $D$ positive diagonal of type $\nund$, such that $(\nund, A)\sim (\nund, A_0 D)$.
It follows that
$(\nund, A D^{-1})\sim (\nund, A_0)$ and as observed  above the matrix $C= A D^{-1}-A_0$ satisfies $Q_{C}=0$ on $H_\nund$.
Finally, since $A_0$ is skew-symmetric, we have
$Q_{ A D^{-1}} =0$ on $H_\nund$.
In other words, $(\nund,A)$ is conservative in the sense of Definition~\ref{def cons 1}.

Conversely, assume that $A$ is  conservative in the sense of Definition~\ref{def cons 1}.
Then for some positive diagonal matrix $D$ of type $\nund$, $Q_{A D^{-1}}$ vanishes on $H_\nund$.

Let $\{v_1,\ldots, v_n\}$ be an orthonormal basis of $\Rr^n$ where the vectors $v_\alpha=\frac{1}{\sqrt{n_\alpha}}\,\pi_\alpha(\one)$, with $\alpha\in\{1,\ldots, p\}$,
form a orthonormal basis of $H_\nund^\perp$,
and the family $\{v_{p+1},\ldots, v_n\}$ is any orthonormal basis of $H_\nund$.

Let $m_{ij}=\langle A D^{-1} v_i , v_j\rangle$, for
all $i,j=1,\ldots, n$, so that $M=(m_{ij})_{i,j}$
represents the linear endomorphism
$A D^{-1}:\Rr^n\to \Rr^n$ w.r.t. the basis $\{v_1,\ldots, v_n\}$. Since $Q_{A D^{-1}}=0$ on $H_\nund$, the $(n-p)\times (n-p)$ sub-matrix $M'$ of $M$, formed by the
last $n-p$ rows and columns of $M$, is skew-symmetric.

Let $M_0\in\Mat_{n\times n}(\Rr)$ be a skew-symmetric matrix that shares  with $M$ its last $n-p$ rows. Let $A_0:\Rr^n \to \Rr^n $ be the linear endomorphism represented by the matrix $M_0$ w.r.t. the basis $\{v_1,\ldots, v_n\}$, and identify $A_0$ with the matrix that represents it w.r.t. the canonical basis.
Because $M_0$ is skew-symmetric, and $\{v_1,\ldots, v_n\}$ orthonormal, $A_0$ is skew-symmetric too.

Then $C=A\,D^{-1}-A_0$ is represented by the matrix
$M-M_0$ w.r.t. the basis $\{v_1,\ldots, v_n\}$.
Since the last $n-p$ rows of $M-M_0$ are zero,
the range of $C:\Rr^n\to\Rr^n$ is contained in $H_\nund^\perp$. Hence, by Lemma~\ref{lemma equivalence},
$(\nund, A D^{-1})\sim (\nund, A_0)$, which implies
$(\nund, A)\sim (\nund, A_0 D)$. Since $A_0$ is skew-symmetric, this proves that $(\nund,A)$ is conservative in the sense of Definition~\ref{def cons 2}.
\end{proof}

\begin{remark}
\label{rmk AD D-1A}
For all $w\in H_\nund$, 
$Q_{D^{-1}A}(w)=Q_{A D}(D^{-1} w)$.
Hence, because $D\, H_{\nund}= H_{\nund}$ for any diagonal matrix $D$ of type $\nund$
\begin{enumerate}
\item $Q_{A\,D}(w) = 0$ \; $\forall\; w\in H_\nund$\quad 
$\Leftrightarrow$ \quad 
$Q_{D^{-1}A}(w) =  0$ \; $\forall\; w\in H_\nund$. 
\item $Q_{A\,D}(w) \leq 0$ \; $\forall\; w\in H_\nund$\quad 
$\Leftrightarrow$ \quad 
$Q_{D^{-1}A}(w) \leq  0$ \; $\forall\; w\in H_\nund$. 
\end{enumerate}
\end{remark}

\begin{lemma}\label{lemma dot h}
Given $A\in\Mat_{n\times n}(\Rr)$, if  $q$ is a formal equilibrium of $X_{\nund,A}$, and $D=\diag(d_i)$ is a positive diagonal matrix  of type $\nund$,
then the derivative of 
\begin{equation}\label{hamilt}
h(x)=-\sum_{i=1}^n \frac{q_i}{d_i}\,\log x_i
\end{equation} 
along the flow of $X_{\nund,A}$ satisfies
$$\dot h (x) =  Q_{D^{-1}A}(x-q)\;. $$
\end{lemma}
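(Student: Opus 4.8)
The claim is a direct computation of $\dot h(x) = Dh_x(X_{\nund,A}(x))$ for the function $h(x) = -\sum_{i} \frac{q_i}{d_i}\log x_i$, using that $q$ is a formal equilibrium and that $D$ has type $\nund$. First I would differentiate: since $\partial h/\partial x_i = -\frac{q_i}{d_i x_i}$ and the $i$-th component of $X_{\nund,A}$ is $x_i\big((Ax)_i - \sum_{\beta}(x^\alpha)^T A^{\alpha\beta}x^\beta\big)$ for $i\in\alpha$, the factor $x_i$ cancels the $1/x_i$, giving
\[
\dot h(x) = -\sum_{\alpha=1}^p \sum_{i\in\alpha} \frac{q_i}{d_i}\left( (Ax)_i - \sum_{\beta=1}^p (x^\alpha)^T A^{\alpha\beta}x^\beta \right).
\]
Because $D$ is of type $\nund$, $d_i = d_\alpha$ is constant on each group $\alpha$, so this rewrites as $-\sum_\alpha \frac{1}{d_\alpha}\sum_{i\in\alpha} q_i\big((Ax)_i - \sum_\beta (x^\alpha)^T A^{\alpha\beta}x^\beta\big)$.

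The key simplification is that the ``average payoff'' term $\sum_{\beta}(x^\alpha)^T A^{\alpha\beta}x^\beta$ does not depend on $i$, and $\sum_{i\in\alpha} q_i = 1$ by property (b) of a formal equilibrium. Hence for each $\alpha$ the correction term contributes $-\frac{1}{d_\alpha}\big(\sum_{i\in\alpha}q_i\big)\sum_\beta(x^\alpha)^T A^{\alpha\beta}x^\beta = -\frac{1}{d_\alpha}\sum_\beta (x^\alpha)^T A^{\alpha\beta}x^\beta$, which I recognize as $-\frac{1}{d_\alpha}(q^\alpha)^T A^{\alpha\beta}x^\beta$ summed over $\beta$ only if... actually more cleanly: it equals $-\frac{1}{d_\alpha}\sum_{i\in\alpha}\sum_{j=1}^n q_i\, a_{ij}x_j$ would be the analogue, but here it is $-\frac{1}{d_\alpha}(x^\alpha)^T A^{\alpha}x$ with $A^\alpha = (A^{\alpha\beta})_\beta$ the block row. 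The point is that after using $\sum_{i\in\alpha}q_i=1$ one gets $\dot h(x) = -\sum_\alpha\frac{1}{d_\alpha}\Big(\sum_{i\in\alpha}q_i(Ax)_i - \sum_\beta (x^\alpha)^T A^{\alpha\beta}x^\beta\Big)$, and I would then symmetrize by also writing a trivially-zero term: using property (a) of the formal equilibrium, $(Aq)_i$ is constant on each group $\alpha$, so $\sum_{i\in\alpha}(x_i - q_i)(Aq)_i = (Aq)|_\alpha \sum_{i\in\alpha}(x_i-q_i) = 0$ since $x,q\in\Gamma_\nund$ both have group-sums equal to $1$. This lets me insert the term $-\sum_\alpha\frac{1}{d_\alpha}\sum_{i\in\alpha}(x_i-q_i)(Aq)_i = 0$ and combine.

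Assembling, I expect $\dot h(x)$ to collapse to $\sum_{i,j}\frac{1}{d_i}(x_i - q_i)a_{ij}(x_j - q_j)$: one writes everything in terms of $x - q$, uses bilinearity of $(x,y)\mapsto \sum_{i,j}\frac{a_{ij}}{d_i}x_i y_j$, discards the cross terms that vanish by (a)+(b) and the constant term that vanishes because $\sum_{i\in\alpha}q_i = \sum_{i\in\alpha}x_i$, leaving exactly $(x-q)^T D^{-1}A(x-q) = Q_{D^{-1}A}(x-q)$. The main obstacle — really the only subtlety — is bookkeeping the group structure carefully: making sure the average-payoff term is handled correctly (it is group-dependent but not index-dependent within the group, which is precisely why $\sum_{i\in\alpha}q_i=1$ kills it) and that the two defining properties of a formal equilibrium are invoked at the right places to eliminate the linear-in-$x$ terms. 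I would organize the computation as a single \texttt{align*} display tracking the coefficient of each monomial type, rather than manipulating vectors abstractly, to keep the group bookkeeping transparent.
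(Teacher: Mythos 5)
Your proposal is correct and follows essentially the same route as the paper's proof: compute $\dot h=-\sum_{\alpha}\sum_{i\in\alpha}\frac{q_i}{d_i}\bigl((Ax)_i-\sum_\beta (x^\alpha)^T A^{\alpha\beta}x^\beta\bigr)$, use $\sum_{i\in\alpha}q_i=1$ and the constancy of $d_i$ on groups to reduce this to $(x-q)^T D^{-1}Ax$, and then subtract the vanishing term $(x-q)^T D^{-1}Aq=0$ (which is zero precisely because $(Aq)_i$ and $d_i$ are constant on each group while $\sum_{i\in\alpha}(x_i-q_i)=0$). The paper's proof is exactly this computation written as a short chain of equalities, so no further comparison is needed.
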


\begin{proof}
\begin{eqnarray}\nonumber
\dot h  &=&-\sum_{\alpha=1}^p\sum_{i\in \alpha} \frac{q_i}{d_i}\frac{\dot{x_i}}{x_i}=-\sum_{\alpha=1}^p\sum_{i\in \alpha} \frac{q_i}{d_i} \left((Ax)_i-\sum_{\beta=1}^p (x^{\alpha})^ t A^{\alpha,\beta} x^{\beta}\right)\\ \nonumber
		&=&-q^TD^{-1}Ax+x^TD^{-1}Ax=(x-q)^TD^{-1}Ax\\ \nonumber
		&=&(x-q)^TD^{-1}Ax-\underbrace{(x-q)^TD^{-1}Aq}_{=0}\\ \nonumber
		&=&(x-q)^TD^{-1}A(x-q)=Q_{D^{-1}A}(x-q) \,.
\end{eqnarray}
To explain the vanishing term notice that
for all $\alpha\in \{1,\ldots, p\}$ and $i,j\in\alpha$,
$(A\,q)_i=(A\,q)_j$, $d_i=d_j$ and $\sum_{k\in\alpha} (x_k-q_k)=0$.
\end{proof}

\begin{proposition}\label{Conserv_Poly}
If $A$ is conservative, and $q$ and $D=\diag(d_i)$ are as in Definition~\ref{def cons 1}, then~\eqref{hamilt} 
is a first integral for the flow of $X_{\nund,A}$, i.e.,
$\dot{h} = 0$ along the flow of $X_{\nund,A}$. 

Moreover,
$X_{\nund,A}$ is Hamiltonian w.r.t. a stratified Poisson structure on the prism $\Gamma_\nund$, having $h$ as its Hamiltonian function.
\end{proposition}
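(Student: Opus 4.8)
The plan is to split Proposition~\ref{Conserv_Poly} into its two assertions. The first -- that $h$ is a first integral -- is essentially immediate from Lemma~\ref{lemma dot h}: if $A$ is conservative in the sense of Definition~\ref{def cons 1}, then there is a positive diagonal matrix $D$ of type $\nund$ with $Q_{AD}=0$ on $H_\nund$, and by Remark~\ref{rmk AD D-1A}(1) this is equivalent to $Q_{D^{-1}A}=0$ on $H_\nund$. Since for any $x\in\Gamma_\nund$ we have $x-q\in H_\nund$ (both $x$ and $q$ satisfy $\sum_{j\in\alpha}x_j=1=\sum_{j\in\alpha}q_j$), Lemma~\ref{lemma dot h} gives $\dot h(x)=Q_{D^{-1}A}(x-q)=0$ identically on $\Gamma_\nund$. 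So $h$ is constant along orbits of $X_{\nund,A}$.

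For the second assertion -- the Hamiltonian character of $X_{\nund,A}$ -- the first step is to reduce to the skew-symmetric normal form. By Proposition~\ref{prop: def equiv}, conservativeness in the sense of Definition~\ref{def cons 1} is equivalent to Definition~\ref{def cons 2}, so there exist a skew-symmetric $A_0$ and a positive diagonal $D$ of type $\nund$ with $(\nund,A)\sim(\nund,A_0D)$; by Proposition~\ref{equivalent matrices prop}, $X_{\nund,A}=X_{\nund,A_0D}$ on $\Gamma_\nund$, so it suffices to treat $A=A_0D$. The next step is to build the Poisson structure: on the affine support $E^{n-p}$ of each face (and in particular on the interior), transport the constant bivector dual to the skew-symmetric form carried by $A_0$ through the coordinates $\log x_i$, so that in the variables $u_i=\log x_i$ the bracket becomes the constant Poisson structure $\{u_i,u_j\}=(A_0)_{ij}$ (appropriately restricted to $H_\nund$ using the identification from Lemma~\ref{lemma x-q}, i.e.\ working in the basis $\Bscr_v$ on each stratum). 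One then checks that this bracket is well defined on each stratum of $\Gamma_\nund$, that the collection over all strata is compatible (a \emph{stratified} Poisson structure in the sense appropriate to the prism), and that the Jacobi identity holds -- the latter because a constant bivector in linear coordinates is automatically Poisson. Finally, compute the Hamiltonian vector field of $h(x)=-\sum_i (q_i/d_i)\log x_i$ with respect to this bracket and verify it equals $X_{\nund,A}$; this is the computation $\{x_i,h\}=x_i\sum_j (A_0)_{ij} d_j \cdot (\text{stuff})$ matched against~\eqref{ode:pmg}, using that $D$ has type $\nund$ and $q$ is a formal equilibrium to produce the group-average term.

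The main obstacle is the careful bookkeeping at the boundary strata: the prism $\Gamma_\nund$ is a product of simplices, its faces are again prisms of smaller type, and one must verify that the Poisson bracket restricts consistently from a face to its sub-faces and that $X_{\nund,A}$ is tangent to each stratum (which it is, since the flow preserves $\Gamma_\nund$ and each coordinate hyperplane $\{x_i=0\}$). Handling the singular nature of the product-of-simplices stratification -- rather than a single symplectic leaf -- and pinning down precisely what ``stratified Poisson structure'' means here is where the real content lies; the interior computation itself is a routine chain-rule verification once the bracket $\{u_i,u_j\}=(A_0)_{ij}$ is in place. It is likely cleanest to phrase the bracket intrinsically on the dual of $H_\nund$ via the bilinear form $B_{A_0}$ (cf.\ the remark after Proposition~\ref{DQA rule}) and then observe that skew-symmetry of $A_0$ makes $B_{A_0}$ alternating on $H_\nund$, which is exactly what is needed for a constant Poisson bivector.
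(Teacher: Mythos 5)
Your treatment of the first assertion is exactly the paper's: the authors also prove $\dot h=0$ by combining Lemma~\ref{lemma dot h} with Remark~\ref{rmk AD D-1A}, and your observation that $x-q\in H_\nund$ for $x\in\Gamma_\nund$ is the (implicit) point that makes this work. No issues there.

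For the second assertion the comparison is different in kind: the paper does not construct the stratified Poisson structure at all --- its entire proof of that part is a citation to Theorem~3.20 of the reference \cite{AD2015}, which is precisely the statement that a conservative polymatrix replicator is Hamiltonian for a stratified Poisson structure on $\Gamma_\nund$ with Hamiltonian $h$. You instead attempt to rebuild that theorem from scratch, and while your outline points in the right direction (reduce to $A=A_0D$ with $A_0$ skew-symmetric via Propositions~\ref{prop: def equiv} and~\ref{equivalent matrices prop}, pass to logarithmic coordinates adapted to a basis $\Bscr_v$ of $H_\nund$, and use the constant bivector determined by the alternating form $B_{A_0}$ on $H_\nund$), the plan stops exactly where the real work begins. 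You yourself flag that defining the bracket consistently across the boundary strata, verifying the Jacobi identity and the stratified compatibility, and carrying out the final matching computation $\{x_i,h\}=X^i_{\nund,A}(x)$ are left open; these are not routine afterthoughts but the actual content of the cited theorem (the faces of $\Gamma_\nund$ are lower-dimensional prisms carrying induced polymatrix games, and the bracket must restrict to the one those induce). So as a self-contained argument the second half has a genuine gap. If your intent is to match the paper's level of rigor, the clean fix is simply to invoke \cite[Theorem 3.20]{AD2015} as the authors do; if you want a self-contained proof, the construction of the stratified Poisson structure must be written out in full, not sketched.
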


\begin{proof}
The first part follows from Lemma~\ref{lemma dot h} and Remark~\ref{rmk AD D-1A}.
The second follows from ~\cite[theorem 3.20]{AD2015}.
\end{proof}

%%%%%%%%%%%%%%%%%%%%%%%%%%%%%%%%%%%%%%%%%%%%%%%%%%%%%%%%%%%%%%%%%%%%%%%%%%%%%%%%%%%%
%%%%%%%%%%%%%%%%%%%%%%%%%%%%%%%%%%%%%%%%%%%%%%%%%%%%%%%%%%%%%%%%%%%%%%%%%%%%%%%%%%%%

\section{Dissipative Polymatrix Replicators}
\label{dissipative}

\begin{defn}\label{defi:dissipative_poly}
A polymatrix game $(\nund,A)$ is called  {\em dissipative}\index{polymatrix game!dissipative}  if it
has a formal equilibrium $q$, and there exists a positive diagonal
matrix $D$ of type $\nund$ such that $Q_{A\,D}\leq 0$ on $H_{\nund}$.
\end{defn}

\begin{proposition}\label{prop Lyapunov function}
If $(\nund,A)$ is dissipative, and $q$ and $D$ are as in Definition~\ref{defi:dissipative_poly}, then
$$ h(x)=-\sum_{i=1}^n \frac{q_i}{d_i}\,\log x_i $$
is a Lyapunov decreasing function for the flow of $X_{\nund,A}$, i.e.,
$\frac{dh}{dt}\leq 0$ along the flow of $X_{\nund,A}$.
\end{proposition}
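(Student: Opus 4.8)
The plan is to reduce the statement to the computation already performed in Lemma~\ref{lemma dot h} together with the sign bookkeeping of Remark~\ref{rmk AD D-1A}. Since $(\nund,A)$ is dissipative it has a formal equilibrium $q$ and $D=\diag(d_i)$ is a positive diagonal matrix of type $\nund$ with $Q_{A\,D}\leq 0$ on $H_\nund$. In particular the hypotheses of Lemma~\ref{lemma dot h} are satisfied, so along the flow of $X_{\nund,A}$ one has
$$ \dot h(x) = Q_{D^{-1}A}(x-q). $$
Thus the whole proof amounts to checking that the right-hand side is $\leq 0$ for every state $x\in\Gamma_\nund$.

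The key (and only nontrivial) observation is that the argument $x-q$ of this quadratic form always lies in the subspace $H_\nund$ on which dissipativity is assumed. This is immediate from the definitions: for any $x\in\Gamma_\nund$ and any $\alpha\in\{1,\ldots,p\}$ we have $\sum_{j\in\alpha}x_j=1$, while the formal equilibrium $q$ satisfies $\sum_{j\in\alpha}q_j=1$ by condition (b) of Definition~\ref{defi:formal_equilibrium}; hence $\sum_{j\in\alpha}(x_j-q_j)=0$ for every $\alpha$, i.e. $x-q\in H_\nund$ by~\eqref{def Hnund}. (This is of course consistent with the fact, already noted in the text, that $\Gamma_\nund$ is parallel to $H_\nund$.) I would emphasize this point because it is exactly what makes the sign condition on $Q_{AD}$, which is imposed only on $H_\nund$ rather than on all of $\Rr^n$, usable here.

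Finally I would invoke part~(2) of Remark~\ref{rmk AD D-1A}: since $Q_{A\,D}(w)\leq 0$ for all $w\in H_\nund$, the same remark gives $Q_{D^{-1}A}(w)\leq 0$ for all $w\in H_\nund$. Applying this with $w=x-q$ yields $\dot h(x)=Q_{D^{-1}A}(x-q)\leq 0$ for every $x\in\Gamma_\nund$, which is precisely the assertion that $h$ is a Lyapunov decreasing function for $X_{\nund,A}$. I do not expect any real obstacle: the proposition is essentially a repackaging of Lemma~\ref{lemma dot h} and Remark~\ref{rmk AD D-1A}, and the proof is a two-line combination of them once the membership $x-q\in H_\nund$ is recorded.
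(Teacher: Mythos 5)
Your proposal is correct and follows exactly the route the paper takes: its proof of this proposition is the one-line statement that it ``follows from Lemma~\ref{lemma dot h} and Remark~\ref{rmk AD D-1A}'', and you have simply filled in the details, including the (correct and worth recording) observation that $x-q\in H_\nund$ for $x\in\Gamma_\nund$, which is what lets the sign hypothesis on $Q_{AD}$ restricted to $H_\nund$ be applied.
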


\begin{proof}
Follows from Lemma~\ref{lemma dot h}, and Remark~\ref{rmk AD D-1A}.
\end{proof}

\newcommand{\Repr}[2]{{#2}^{(#1)}}
\newcommand{\SymRepr}[2]{S {#2}^{(#1)}}
\newcommand{\ReprEntry}[3]{{#2}^{(#1)}_{#3}}
\newcommand{\SymReprEntry}[3]{S {#2}^{(#1)}_{#3}}

\begin{defn}\label{defi:admissible_poly}
A polymatrix game $(\nund,A)$ is called {\em admissible}\index{polymatrix game!admissible} if $A$ is dissipative and for some vertex $v\in\Gamma_{\nund}$ the matrix $A_v$ is stably dissipative (see Definition~\ref{stably dissip}). We denote by $V_{\nund,A}^*$ the subset of vertices $v\in V_{\nund}$ such that $A_v$ is stably dissipative.
\end{defn}

\begin{proposition}\label{prop:quotient rule}
Let $q$ be a formal equilibrium of  the polymatrix game $(\nund,A)$.
Given $v\in V_{\nund}$ and $(i,j)\in \Vscr_v$, then we have the following  quotient rule 
\index{quotient rule!polymatrix}
\begin{equation}\label{poly quotient rule}
\frac{d}{dt}\left(\frac{x_i}{x_j}\right)=
\frac{x_i}{x_j}\, \sum_{(k,\ell)\in \Vscr_v}A_{(i,j),(k,\ell)}\, (x_k-q_k) \;.
\end{equation}
\end{proposition}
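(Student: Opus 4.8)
The plan is to differentiate $\log(x_i/x_j)$ directly using the defining o.d.e.~\eqref{ode:pmg} and exploit the fact that $i$ and $j$ lie in the same group $\alpha$. First I would observe that for $i,j\in\alpha$ the o.d.e.~\eqref{ode:pmg} gives
\[
\frac{\dot x_i}{x_i}=(Ax)_i-\sum_{\beta=1}^p (x^{\alpha})^T A^{\alpha,\beta}x^\beta
\qquad\text{and}\qquad
\frac{\dot x_j}{x_j}=(Ax)_j-\sum_{\beta=1}^p (x^{\alpha})^T A^{\alpha,\beta}x^\beta ,
\]
so that the ``average payoff'' subtracted term is identical for $i$ and $j$ and cancels. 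Hence
\[
\frac{d}{dt}\left(\frac{x_i}{x_j}\right)=\frac{x_i}{x_j}\left(\frac{\dot x_i}{x_i}-\frac{\dot x_j}{x_j}\right)=\frac{x_i}{x_j}\bigl((Ax)_i-(Ax)_j\bigr)=\frac{x_i}{x_j}\,(e_i-e_j)^T A\,x .
\]

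Next I would use that $q$ is a formal equilibrium: condition (a) of Definition~\ref{defi:formal_equilibrium} says $(Aq)_i=(Aq)_j$ for $i,j\in\alpha$, i.e.\ $(e_i-e_j)^T A\,q=0$. Therefore $(e_i-e_j)^TA\,x=(e_i-e_j)^TA\,(x-q)$, and one may replace $x$ by $x-q$ for free.

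Then I would expand $x-q$ in the basis $\Bscr_v$ of $H_\nund$ via Lemma~\ref{lemma x-q}, writing $x-q=\sum_{(k,\ell)\in\Vscr_v}(x_k-q_k)(e_k-e_\ell)$, and plug in:
\[
(e_i-e_j)^T A\,(x-q)=\sum_{(k,\ell)\in\Vscr_v}(x_k-q_k)\,(e_i-e_j)^T A\,(e_k-e_\ell).
\]
Since $(e_i-e_j)^TA\,(e_k-e_\ell)=a_{ik}-a_{i\ell}-a_{jk}+a_{j\ell}=A_{(i,j),(k,\ell)}$ by the definition of the coefficients $A_{(i,j),(k,\ell)}$, this is exactly $\sum_{(k,\ell)\in\Vscr_v}A_{(i,j),(k,\ell)}(x_k-q_k)$, which combined with the first display yields~\eqref{poly quotient rule}.

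There is no real obstacle here; the only point requiring a moment's care is making sure that the cancellation of the group-average term is legitimate, which is automatic precisely because $(i,j)\in\Vscr_v$ forces $i$ and $j$ to be strategies of the same group, and that the passage from $x$ to $x-q$ uses condition (a) of the formal equilibrium rather than (b).
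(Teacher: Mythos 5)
Your proposal is correct and follows essentially the same route as the paper's proof: compute $\frac{d}{dt}(x_i/x_j)=\frac{x_i}{x_j}((Ax)_i-(Ax)_j)$, pass to $x-q$ using $(Aq)_i=(Aq)_j$, expand $x-q$ via Lemma~\ref{lemma x-q}, and identify $(e_i-e_j)^TA(e_k-e_\ell)$ with $A_{(i,j),(k,\ell)}$. The only difference is that you spell out the cancellation of the group-average term, which the paper leaves implicit; note also that the expansion step does quietly use condition (b) of Definition~\ref{defi:formal_equilibrium}, since Lemma~\ref{lemma x-q} needs $\sum_{j\in\alpha}q_j=1$ to place $x-q$ in $H_{\nund}$.
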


\begin{proof}
Let $v$ be a vertex of $\Gamma_{\nund}$, $(i,j)\in \Vscr_v$, and $q$ be a formal equilibrium.
Using Lemma~\ref{lemma x-q}, we have
\begin{eqnarray}
\frac{d}{dt}\left(\frac{x_i}{x_j}\right) \nonumber &=& \frac{x_i}{x_j}\left((Ax)_i-(Ax)_j\right) \\ \nonumber
   &=& \frac{x_i}{x_j}\left(\left(A(x-q)\right)_i-\left(A(x-q)\right)_j\right) \\ \nonumber
   &=& \frac{x_i}{x_j}\sum_{(k,\ell)\in\Vscr_v}(e_i-e_j)^TA(e_k-e_\ell)(x_k-q_k) \\ \nonumber
   &=& \frac{x_i}{x_j}\sum_{(k,\ell)\in\Vscr_v} A_{(i,j),(k,\ell)} (x_k-q_k) \,. \nonumber
\end{eqnarray}
\end{proof}

\begin{proposition}\label{prop:limit quot rule}
If the dissipative polymatrix replicator associated to $(\nund,A)$
has an equilibrium $q\in\inter\left(\Gamma_{\nund}\right)$,
then for any state $x_0\in\inter\left(\Gamma_{\nund}\right)$  and any pair of strategies $i,j$ in the same group, the solution $x(t)$ of~\eqref{ode:pmg} with initial condition $x(0)=x_0$
satisfies 
$$ \frac{1}{c}\le \frac{x_i(t)}{x_j(t)}\le c\,, \quad \textrm{for all }\quad t\ge 0\,,$$ 
where  $c=c(x)$ is a constant depending on $x$.
\end{proposition}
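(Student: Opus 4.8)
The plan is to run everything through the Lyapunov function $h(x)=-\sum_{i=1}^n \frac{q_i}{d_i}\log x_i$ of Proposition~\ref{prop Lyapunov function}: along a forward orbit $h$ does not increase, and a bounded sublevel set of $h$ inside $\inter(\Gamma_\nund)$ forces every coordinate to stay bounded away from $0$, which together with the trivial bound $x_i\le 1$ on the prism yields the two–sided control of the quotients $x_i/x_j$.

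First I would check that the given interior equilibrium $q$ is in fact a \emph{formal} equilibrium of $(\nund,A)$: condition (b) of Definition~\ref{defi:formal_equilibrium} holds because $q\in\Gamma_\nund$, and condition (a) is exactly the description of interior equilibria in Proposition~\ref{prop int equilibria}. Since $(\nund,A)$ is dissipative, I may then fix a positive diagonal matrix $D=\diag(d_i)$ of type $\nund$ with $Q_{AD}\le 0$ on $H_\nund$, and Lemma~\ref{lemma dot h} together with Remark~\ref{rmk AD D-1A} (equivalently, Proposition~\ref{prop Lyapunov function}) gives $\dot h=Q_{D^{-1}A}(x-q)\le 0$ along the flow of $X_{\nund,A}$. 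I also use that this flow leaves $\Gamma_\nund$ and its interior invariant (Section~\ref{poly}), so that the orbit $x(t)$ of $x_0\in\inter(\Gamma_\nund)$ stays in $\inter(\Gamma_\nund)$ and all the quotients $x_i(t)/x_j(t)$ are well defined.

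The core estimate is then immediate. Monotonicity gives $h(x(t))\le h(x_0)$ for all $t\ge 0$; since $x(t)\in\Gamma_\nund$ we have $0<x_i(t)\le 1$, and as $q_i/d_i>0$ each summand $-\frac{q_i}{d_i}\log x_i(t)$ is nonnegative, so for every index $k$
\[
 -\frac{q_k}{d_k}\log x_k(t)\;\le\;h(x(t))\;\le\;h(x_0),
 \qquad\text{i.e.}\qquad
 x_k(t)\;\ge\;\exp\!\Big(-\tfrac{d_k}{q_k}\,h(x_0)\Big)\;>\;0 .
\]
Setting $c=c(x_0):=\exp\!\big(h(x_0)\,\max_{1\le k\le n} d_k/q_k\big)\ge 1$, we get $x_k(t)\ge 1/c$ for all $k$ and all $t\ge 0$; hence, for strategies $i,j$ in the same group, $x_i(t)/x_j(t)\le 1/x_j(t)\le c$ and $x_i(t)/x_j(t)\ge x_i(t)\ge 1/c$, which is the claimed bound.

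I do not expect a genuine obstacle. The only points that need care are: recognizing the interior equilibrium as a formal equilibrium so that the dissipative–case Lyapunov function applies with this specific $q$; using the same $D$ that witnesses dissipativity; and noting that the upper bound on the quotients is supplied for free by compactness of the prism ($x_i\le 1$). Everything else reduces to the one–line inequality $-\log x_k\le (d_k/q_k)\,h$.
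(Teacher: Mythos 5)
Your proof is correct and follows essentially the same route as the paper: both arguments confine the forward orbit to the sublevel set $\{h\le h(x_0)\}$ of the Lyapunov function from Proposition~\ref{prop Lyapunov function} and deduce the two-sided bound on $x_i/x_j$ from the fact that this set is a compact subset of $\inter(\Gamma_\nund)$. The only difference is cosmetic: the paper invokes properness and compactness to assert that the quotient attains a minimum and maximum there, whereas you extract the explicit lower bound $x_k(t)\ge\exp(-(d_k/q_k)h(x_0))$ from the nonnegativity of each summand of $h$ --- a harmless (and slightly more informative) refinement.
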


\begin{proof}
Notice that the Lyapunov function $h$ in Proposition~\ref{prop Lyapunov function} is a proper function because $q\in {\rm int}(\Gamma_{\nund})$.
Given $x_0\in\inter\left(\Gamma_{\nund}\right)$,   $h(x_0)=a$ for some constant  $a>0$. By Proposition~\ref{prop Lyapunov function} the compact set $K=\{ x\in  {\rm int}(\Gamma_{\nund}) \colon h(x)\le a\}$ is forward invariant by the flow of $X_{\nund,A}$. In particular, the solution of the polymatrix replicator with initial condition $x(0)=x_0$ lies in $K$. Hence the quotient $\frac{x_i}{x_j}$ has a minimum and a maximum in $K$.
\end{proof}

%% Invariant foliation
\begin{proposition}
\label{prop invariant foliation}
Given a dissipative polymatrix game  $(\nund,A)$, if $X_{\nund,A}$ admits an equilibrium $q\in {\rm int}(\Gamma_{\nund})$ then there exists a
$X_{\nund,A}$-invariant foliation $\Fscr$ on ${\rm int}(\Gamma_\nund)$ such that every leaf of $\Fscr$ contains exactly one equilibrium point.
\end{proposition}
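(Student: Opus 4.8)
The plan is to mimic the proof of the analogous statement for Lotka--Volterra systems (Proposition~\ref{prop LV invariant foliation}, i.e. \cite[Prop.~2.1 and Thm.~2.3]{DP2012}), transported to the prism $\Gamma_\nund$ via the quadratic form $Q_A$ on $H_\nund$ and the Lyapunov function $h$ from Proposition~\ref{prop Lyapunov function}. First I would fix $q\in\inter(\Gamma_\nund)$, a positive diagonal matrix $D$ of type $\nund$ with $Q_{AD}\le 0$ on $H_\nund$, and work with the bilinear form $B(w,w')=(D^{-1}w)^TA\,w'$ restricted to $H_\nund$; by Remark~\ref{rmk AD D-1A} the associated quadratic form $w\mapsto Q_{D^{-1}A}(w)$ is negative semidefinite on $H_\nund$. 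Let $N\subseteq H_\nund$ be its \emph{isotropic cone} — equivalently, by polarization of a negative semidefinite form, the null space $\{w\in H_\nund : (D^{-1}A+A^TD^{-1})w\in H_\nund^\perp\}$, which is a linear subspace of $H_\nund$. The candidate foliation $\Fscr$ is the one whose leaf through $x\in\inter(\Gamma_\nund)$ is the connected component of $(x+N)\cap\inter(\Gamma_\nund)$; note $x+N\subseteq x+H_\nund$ stays in the affine hull $E^{n-p}$ of $\Gamma_\nund$, so the leaves are genuine submanifolds of $\inter(\Gamma_\nund)$.

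The steps, in order: (1) show $\Fscr$ is $X_{\nund,A}$-invariant. For this I would use the quotient rule of Proposition~\ref{prop:quotient rule}: pick a vertex $v$, use coordinates $(\log(x_i/x_{i_\alpha}))_{(i,i_\alpha)\in\Vscr_v}$ on $\inter(\Gamma_\nund)$, in which the vector field becomes, componentwise, $\sum_{(k,\ell)\in\Vscr_v}A_{(i,j),(k,\ell)}(x_k-q_k)$, i.e. it is $A_v$ applied to $x-q$ expressed in the basis $\Bscr_v$. Since $A_v$ represents $Q_A$ on $H_\nund$ in the basis $\Bscr_v$ (Proposition~\ref{DQA rule}), the subspace $N$ is exactly $\Nuc$ of the symmetrized part of the dissipative matrix $A_v$, and by Proposition~\ref{prop nuc A = nuc AT} applied to the dissipative matrix (appropriately scaled) one gets $\Nuc(A_v^{\mathrm{sym}})$ is invariant under $A_v$ and $A_v^T$; this is what makes $x+N$ flow-invariant — the argument is precisely the LV one in \cite{DP2012}. (2) Show each leaf contains exactly one equilibrium. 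Existence: on a leaf $L=(x_0+N)\cap\inter(\Gamma_\nund)$ the restriction $h|_L$ is proper and bounded below (since $h$ is, using $q\in\inter$), and $\dot h\le 0$ on $\Gamma_\nund$ with $\dot h(x)=Q_{D^{-1}A}(x-q)$ vanishing exactly when $x-q\in N$; so by La Salle (Theorem~\ref{La Salle theorem}) applied to the complete semiflow restricted to $L$, every $\omega$-limit in $L$ lies in $\{x : x-q\in N\}\cap L$. But $x-q\in N$ together with $x\in x_0+N$ forces... well, only if $q\in L$; in general the equilibrium in $L$ is the unique point where the restricted gradient of $h$ vanishes. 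Cleaner: $h|_L$ is strictly convex in the logarithmic coordinates (its Hessian is $\diag(q_i/(d_i x_i^2))$-type, positive definite), hence has a unique critical point $q_L\in L$, and at $q_L$ one checks $X_{\nund,A}(q_L)\in T_{q_L}L=N$ while also the component of $X_{\nund,A}(q_L)$ normal to $L$ vanishes because $\dot h=0$ there — combining, $X_{\nund,A}(q_L)=0$. Uniqueness of the equilibrium in $L$ follows from strict convexity of $h|_L$ (any interior equilibrium is a critical point of $h|_L$, by the computation $\dot h(x)=Q_{D^{-1}A}(x-q)$ and Proposition~\ref{prop int equilibria}).

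The main obstacle I expect is step~(1), the flow-invariance of the affine leaves $x+N$: one must verify that the "null directions" $N$ of the semidefinite form are not merely where $\dot h$ vanishes instantaneously but are genuinely invariant under the nonlinear flow. In the LV case this rests on the algebraic identity $\Nuc(A)=D\,\Nuc(A^T)$ for dissipative $A$ (Proposition~\ref{prop nuc A = nuc AT}), which forces $A(N)\subseteq N$ so that the linear-in-coordinates drift stays tangent to $x+N$; the polymatrix analogue requires running this through the $A_v$-representation and checking it is independent of the chosen vertex $v$ (Remark after Definition~\ref{def Av} gives rank-independence, but one needs the subspace $N$ itself to be canonical, which follows since all $A_v$ represent the same bilinear form $B_A$ on $H_\nund$). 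Once invariance is in hand, the existence-and-uniqueness of the equilibrium per leaf is the soft part, handled by properness/convexity of $h$ and La Salle exactly as quoted from \cite{DP2012}. I would therefore structure the write-up as: define $N$ and $\Fscr$; prove invariance by reduction to $A_v$ and \cite[Prop.~2.1]{DP2012}; then invoke \cite[Thm.~2.3]{DP2012} (or redo the short convexity argument) for the unique equilibrium in each leaf.
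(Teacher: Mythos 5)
There is a genuine gap: the foliation you construct is not the right one. You take $N\subseteq H_\nund$ to be the null space of the \emph{symmetrized} form $Q_{D^{-1}A}$ and propose the leaves $(x+N)\cap\inter(\Gamma_\nund)$. Two things go wrong. First, these leaves are in general not flow-invariant: Proposition~\ref{prop nuc A = nuc AT} controls $\Nuc(A_v)$ and $\Nuc(A_v^T)$, not the kernel of the symmetric part, and the latter need not be invariant under $A_v$. Already in the Lotka--Volterra model on which you pattern the argument, take $A=\diag(-1,0)$ (dissipative, $Q_A(w)=-w_1^2$): your $N=\operatorname{span}(e_2)$ gives leaves $\{x_1=c\}$, whereas the system $\dot x_1=-x_1(x_1-q_1)$, $\dot x_2=0$ has invariant foliation $\{x_2=c\}$ --- the transverse one. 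Second, and independently of invariance, the proof of Proposition~\ref{prop nuc A = nuc AT} shows $\Nuc(A_v)\subseteq \Nuc(\mathrm{sym}(A_v))$, i.e.\ the direction space of the interior equilibrium set $\Escr=q+\{\bar w\colon w\in\Nuc(A_v)\}$ is contained in your $N$. Hence the whole equilibrium set lies inside a \emph{single} leaf of your foliation, so whenever $\Nuc(A_v)\neq 0$ every leaf contains either no equilibrium or a positive-dimensional family of them --- the opposite of what is to be proved. (The worked example of Section~\ref{applications}, where $Q_A(x)=-9x_3^2$ and the equilibria form a segment with vanishing $x_3$-direction, exhibits exactly this.) Your La~Salle step breaks for the same reason: $\{\dot h=0\}=q+N$ meets a leaf $x_0+N$ either trivially or in the whole leaf.

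The paper's proof runs in the opposite direction. For each $b\in\Nuc(A_v^T)$ the function $g_b(x)=\sum_j \bar b_j\log x_j$ is a \emph{first integral}: by the quotient rule, $\dot g_b=b^TA_v\,(x_k-q_k)_{k\in\Vscr_v}=0$. The foliation is by the joint level sets of $g=(g_{b_1},\dots,g_{b_k})$ for a basis of $\Nuc(A_v^T)$; in logarithmic coordinates the leaves are parallel to $\operatorname{Range}(A_v)=\Nuc(A_v^T)^\perp$, transverse to the equilibrium directions rather than parallel to them. Invariance is then automatic, and the identity $\Nuc(A_v)=D\,\Nuc(A_v^T)$ is used not for invariance but to show that each leaf meets $\Escr$ in exactly one point, via the nonsingularity of a block matrix built from $\Nuc(A_v^T)$ and $E_0^\perp$ together with a mean value theorem argument replacing your convexity step. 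If you replace your $N$ by the level sets of the integrals $g_b$, the remainder of your outline (properness of $h$ and existence of a unique minimizer per leaf) can be salvaged.
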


\begin{proof}
Fix some  vertex $v\in V_\nund$.
Recall that the entries of $A_v$ are indexed in the set $\Vscr_v\equiv \{\, i\in \{1,\ldots, n\}\,\colon\, v_i=0\,\}$.
Given a vector $w=(w_i)_{i\in \Vscr_v}\in\Rr^{n-p}$, we denote by $\bar w$ the unique vector   $\bar w \in H_\nund$ such that $\bar w_i=w_i$ for all
$i\in \Vscr_v$.

Let $\Escr\subset \Rr^n$ be the affine subspace of all
points $x\in\Rr^n$ such that for all $\alpha=1,\ldots, p$ and all $i,j\in\alpha$,
$(A\,x)_i=(A\,x)_j$ and $\sum_{j\in\alpha} x_j=1$. By definition 
$\Escr\cap {\rm int}(\Rr^n)$ is the set of interior equilibria of $X_{\nund,A}$.
We claim that $\Escr=\{ q+\bar w\colon w\in \Nuc(A_v)\}$. To see this it is enough to remark that
$w\in \Nuc(A_v)$ if and only if 
$$(A\bar w)_i-(A \bar w)_j= (e_i-e_j)^T A\bar w=0\quad
\forall\, (i,j)\in\Vscr_v.$$

Given $b\in \Nuc(A_v^T)$,
consider the function $g_b:{\rm int}(\Rr^n_+)\to \Rr$ defined by
 $g_b(x):= \sum_{j=1}^n \bar b_j\,\log x_j$.
 The restriction of $g_b$ to $\Gamma_\nund$  is invariant by the flow of $X_{\nund,A}$.
Note we can write
$$ g_b(x)= \sum_{l=1}^n \bar b_l \,\log x_l =
\sum_{(i,j)\in \Vscr_v} b_i\,\log \left(\frac{x_i}{x_j}\right) ,$$ and differentiating $g_b$ along the flow of $X_{\nund,A}$, by Proposition~\ref{prop:quotient rule}   we get 
$$ \dot g_b(x) =b^T\, A_v (x_k-q_k)_{k\in\Vscr_v}=0 \quad \text{ for all }\; x\in\Gamma_\nund.$$

Fix a basis $\{b_1,\ldots, b_k\}$ of $\Nuc(A_v^T)$,
and define  $g:{\rm int}(\Rr^n_+)\to \Rr^k$ by
$g(x):=(g_{b_1}(x),\ldots, g_{b_k}(x))$.
This map is a submersion.
For that consider the matrix $B\in\Mat_{k\times n}(\Rr)$
whose rows are the vectors $\bar b_j$, $j=1,\ldots, k$.
We can write
$g(x)=B\,\log x$, where $\log x= (\log x_1,\ldots, \log x_n)$.
Hence $D g_x = B\,D_x^{-1}$, where $D_x=\diag(x_1,\ldots, x_n)$,
and because $B$  has maximal rank, $\rank(B)=k$, 
 the map $g$ is a submersion. Hence $g$ determines the foliation $\Fscr$ whose leaves are the pre-images $g^{-1}(c)=\{ g\equiv c\}$ with $c\in\Rr^k$.

Let us now explain why each leaf of $\Fscr$
contains exactly one  point in $\Escr$.  
Consider  the vector subspace
 parallel to $\Escr$,  $E_0:=\{\bar w \colon w\in \Nuc(A_v)\}$. 
Because $(\nund, A)$ is dissipative, $A_v\in\Mat_{d\times d}(\Rr)$, $d=n-p$, is also dissipative, and by Proposition~\ref{prop nuc A = nuc AT}, $\Nuc(A_v)$ and $\Nuc(A_v^T)$ have the same rank.
Therefore $\dim (E_0)=k$. Let $\{c_1,\ldots, c_{n-k}\}$ be a basis of
$E_0^\perp\subset \Rr^n$ and consider the matrix 
$C\in\Mat_{(n-k)\times n}(\Rr)$ whose rows are the vectors $c_j$,
$j=1,\ldots, n-k$. The matrix $C$  provides the following description  
$ \Escr=\{ x\in\Rr^n \colon C\,(x-q)=0 \}$.
Consider the matrix  $U=\left[\begin{array}{c}
B \\
\hline
C
\end{array}\right]\in\Mat_{n\times n}(\Rr)$, 
which is nonsingular because by Proposition~\ref{prop nuc A = nuc AT}, $\Nuc(A_v)= D\,\Nuc(A_v^T)$, for some positive diagonal matrix $D$.

 The intersection  $g^{-1}(c)\cap \Escr$  
is described by the non-linear system 
\begin{displaymath}
x\in g^{-1}(c)\cap \Escr\quad\Leftrightarrow\quad \left \{ \begin{array}{ll}
B\log x=c\\
C(x-q)=0
\end{array} \right.\;.
\end{displaymath}
Considering $u=\log x$, this system becomes
\begin{displaymath}
\left \{ \begin{array}{ll}
B\,u=c\\
C(e^u-q)=0 
\end{array} \right.\;.
\end{displaymath}
It is now enough to see that 
\begin{displaymath}
{\left \{ \begin{array}{ll}
B\,u=c\\
C(e^u-q)=0
\end{array} \right.} \quad \textrm{and} \quad
{\left \{ \begin{array}{ll}
B\,u'=c\\
C(e^{u'}-q)=0
\end{array} \right.}
\end{displaymath}
imply  $u=u'$.
By the mean value theorem, for every  $i\in\{1,\dots,n\}$ there is some $\tilde{u}_i\in[u_i,u_i']$ such that
\begin{displaymath}
e^{u_i}-e^{u_i'}=e^{\tilde{u}_i}(u_i-u_i'),
\end{displaymath}
which in vector notation is to say that
$$ e^u-e^{u'}   =   D_{e^{\tilde{u}}}(u-u')  =   e^{\tilde{u}}*(u-u'). $$
Hence
\begin{eqnarray}
{\left \{ \begin{array}{ll}B\,(u-u')=0\\ C(e^u-e^{u'})=0 \end{array} \right.}
\quad \nonumber & \Leftrightarrow & \quad
{\left \{ \begin{array}{ll}	B\,(u-u')=0\\ C\,D_{e^{\tilde{u}}}(u-u')=0 \end{array} \right.} \\ \nonumber
& \Leftrightarrow & \quad
{\left[\begin{array}{c} B \\ \hline C\,D_{e^{\tilde{u}}} \end{array}\right]}(u-u')=0  \\ \nonumber
& \Leftrightarrow & \quad 
 U{\left[\begin{array}{c|c} I & 0 \\ \hline 0 & D_{e^{\tilde{u}}} \end{array}\right]}(u-u')=0 \;. \nonumber
\end{eqnarray}
Therefore, because $\left[\begin{array}{c|c}
					        I & 0 \\
					        \hline
					        0 & D_{e^{\tilde{u}}}
					        \end{array}\right]$ is non-singular, we must have $u=u'$.

Restricting $\Fscr$ to ${\rm int}(\Gamma_\nund)$
we obtain a $X_{\nund,A}$-invariant foliation on ${\rm int}(\Gamma_\nund)$.
Notice that the restriction ${g_\vert}_{{\rm int}(\Gamma_\nund)}:{\rm int}( \Gamma_\nund)\to \Rr^k$ is invariant by the flow of $X_{\nund,A}$ because all its components are.

Since all points in ${\rm int}(\Gamma_\nund)\cap \Escr$ are equilibria,  each leaf of the restricted foliation contains exactly one equilibrium point.
\end{proof}

%% The attractor 
\begin{defn}
We call attractor\index{polymatrix replicator!attractor} of the polymatrix replicator~\eqref{ode:pmg}
to the following topological closure
 $$\attr{\nund,A} :=\overline{ \cup_{x\in \Gamma_{\nund}} \omega(x) }\;, $$
 where $\omega(x)$ is the $\omega$-limit of $x$ by the  flow
 $\{\varphi_{\nund,A}^t:\Gamma_{\nund} \to\Gamma_{\nund}\}_{t\in\Rr}$. 
\end{defn}

\begin{proposition}\label{prop:La Salle poly}
Given a dissipative polymatrix replicator associated to $(\nund,A)$
with an equilibrium $q\in\inter\left(\Gamma_{\nund}\right)$ 
and a diagonal matrix $D$ as in Definition~\ref{def cons 1}, we have that
$$ \attr{\nund,A}\subseteq \left\{\, x\in \Gamma_\nund \,:\, Q_{D^{-1}A}(x-q)=0\,\right\}\,. $$
\end{proposition}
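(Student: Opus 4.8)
The plan is to read the inclusion off from La Salle's theorem (Theorem~\ref{La Salle theorem}). I would apply it to the vector field $X_{\nund,A}$ on the open manifold $M=\inter(\Gamma_\nund)$, taking as candidate Lyapunov function the map $h(x)=-\sum_{i=1}^n \frac{q_i}{d_i}\log x_i$ furnished by Proposition~\ref{prop Lyapunov function}. First I would check the three hypotheses of Theorem~\ref{La Salle theorem} on $\inter(\Gamma_\nund)$. That $\dot h\le 0$ along the flow is exactly Proposition~\ref{prop Lyapunov function}. Boundedness from below is clear: on $\Gamma_\nund$ one has $0<x_i\le 1$, hence $-\log x_i\ge 0$, and since $q_i,d_i>0$ every summand of $h$ is nonnegative, so $h\ge 0$. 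Properness holds because $q\in\inter(\Gamma_\nund)$ forces $q_i>0$ for all $i$: any sequence in $\inter(\Gamma_\nund)$ converging to $\partial\Gamma_\nund$ has a coordinate going to $0$, along which $h\to+\infty$, so each sublevel set $\{h\le a\}$ is a compact subset of $\inter(\Gamma_\nund)$ (the observation already used in the proof of Proposition~\ref{prop:limit quot rule}).

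With the hypotheses verified, Theorem~\ref{La Salle theorem} gives $\overline{\bigcup_{x\in\inter(\Gamma_\nund)}\omega(x)}\subseteq\{x\in\inter(\Gamma_\nund):\dot h(x)=0\}$, and by Lemma~\ref{lemma dot h} we have $\dot h(x)=Q_{D^{-1}A}(x-q)$. Because $x\mapsto Q_{D^{-1}A}(x-q)$ is a quadratic polynomial on $\Rr^n$, the set $\{x\in\Gamma_\nund:Q_{D^{-1}A}(x-q)=0\}$ is closed in the compact prism $\Gamma_\nund$, so the closure of the union of the $\omega$-limits of all interior initial conditions is contained in it. This settles the statement for the part of $\attr{\nund,A}$ produced by points of $\inter(\Gamma_\nund)$.

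What remains is to handle orbits starting on $\partial\Gamma_\nund$, and this is where I expect the only real work to be. Every closed face of $\Gamma_\nund$ is a prism $\Gamma_{\mund}$ associated with a sub-type $\mund$ obtained by deleting some strategies, it is invariant under the flow, and the restriction of $X_{\nund,A}$ to it is the polymatrix replicator of the sub-game $(\mund,A_I)$ with $I$ the retained strategies; moreover $Q_{A_I D_I}\le 0$ on $H_{\mund}\subseteq H_\nund$, so this sub-game is again dissipative. One then proceeds by induction on $\dim\Gamma_\nund$, treating faces that carry an interior equilibrium by the argument above and faces without one by the fact that such face dynamics has no $\omega$-limit points in the corresponding open face (so they are already accounted for at a lower stage of the induction), and finally reconciling the quadratic forms attached to the various faces with $Q_{D^{-1}A}(\,\cdot\,-q)$ so that the closed set $\{Q_{D^{-1}A}(x-q)=0\}$ absorbs all the face-level $\omega$-limits. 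The interior statement being immediate from La Salle's theorem and Lemma~\ref{lemma dot h}, this bookkeeping over the stratification of $\Gamma_\nund$ is the step I would expect to require the most care.
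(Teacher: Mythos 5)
Your first two paragraphs are exactly the paper's proof: the paper argues ``by Theorem~\ref{La Salle theorem} the attractor is contained in $\{\dot h=0\}$, and conclude by Lemma~\ref{lemma dot h}'', which implicitly means applying La Salle on $M=\inter(\Gamma_\nund)$ with the proper Lyapunov function $h$ of Proposition~\ref{prop Lyapunov function}; your verification of boundedness below and properness (using $q_i>0$ for all $i$) is correct and is the only content of that step.

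The third paragraph, however, is attempting something that cannot be done. The inclusion is \emph{false} for $\omega$-limits of boundary points, so no amount of bookkeeping over the faces will make the closed set $\{Q_{D^{-1}A}(x-q)=0\}$ absorb them. Concretely, take $p=1$, $\nund=(2)$, $A=\left(\begin{smallmatrix}-1&1\\1&-1\end{smallmatrix}\right)$, $q=(1/2,1/2)$, $D=I$; this game is dissipative since $Q_A(t,-t)=-4t^2\le 0$ on $H_\nund$. Every vertex of $\Gamma_\nund$ is an equilibrium of the polymatrix replicator, so $e_1\in\omega(e_1)$ would lie in $\attr{\nund,A}$ under the literal definition, yet $Q_{D^{-1}A}(e_1-q)=Q_A(1/2,-1/2)=-1\neq 0$. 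This also refutes your intermediate claim that a face without an interior equilibrium contributes no $\omega$-limit points in its relative interior (a vertex is such a face, equal to its own relative interior). The resolution is interpretive rather than mathematical: as in the Lotka--Volterra setting of \cite{DFO1998}, the attractor in this proposition must be read as the closure of the $\omega$-limits of \emph{interior} initial conditions, which is also how it is used in the remainder of Section~\ref{dissipative}. With that reading, your first two paragraphs already constitute a complete proof, identical in substance to the paper's.
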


\begin{proof}
By Theorem~\ref{La Salle theorem}
the attractor $\attr{\nund,A}$ is contained in the
region where $\dot h=0$. The conclusion  follows then by Lemma~\ref{lemma dot h}.
\end{proof}

Given an admissible polymatrix replicator associated to $(\nund,A)$
with an equilibrium $q\in\inter\left(\Gamma_{\nund}\right)$, 
we say that a strategy $i$ {\em is of type $\blV$}  to mean that the following inclusion holds  $\attr{\nund,A}\subseteq \{ x\in\Gamma_\nund \colon  x_i=q_i\}$.
Similarly, we say that a strategy $i$ {\em is of type $\oplus$} to state that 
$\attr{\nund,A}\subseteq \{ x\in\Gamma_\nund \colon  X_{\nund,A}^i(x)=0 \}$, where $X_{\nund,A}^i(x)$ stands for the $i$-th component of the vector $X_{\nund,A}(x)$.
Given two strategies $i$ and $j$ in the same group, we say that {\em  $i$ and $j$ are   related}
when the orbits on the attractor $\attr{\nund,A}$ preserve the foliation
$\{\,\frac{x_i}{x_j}=\text{const.}\,\}$.

For any $v\in V_\nund$ we will denote by $a^v_{ij}$
the entries of the matrix $A_v$.

With this terminology we have

\begin{proposition}
Given an admissible polymatrix game $(\nund,A)$
with an equilibrium $q\in\inter\left(\Gamma_{\nund}\right)$ the following statements hold:
\begin{itemize}
	\item[(1)] For any graph $G(A_v)$ with $v\in V_{\nund,A}^*$:
\begin{itemize}
	\item[(a)] if $i$ is a strategy such that $v_i=0$ and $a_{ii}^v<0$, then  $i$ is of type $\bullet$;
	\item[(b)] if  $j$ is a strategy of type $\bullet$ or $\oplus$  and all neighbours of $j$ but $\ell$ in $G(A_v)$ are of type $\bullet$, then  $\ell$ is of type $\bullet$;
	\item[(c)] if $j$ is a strategy of type $\bullet$ or $\oplus$  and all neighbours of $j$ but $\ell$ in $G(A_v)$ are of type $\bullet$ or $\oplus$, then   $\ell$ is of type $\oplus$;
\end{itemize}
	\item[(2)] For any graph $G(A_v)$ with $v\in V_{\nund}$:
\begin{itemize}
	\item[(d)] if  all neighbours of a strategy $j$
in $G(A_v)$ are of type $\bullet$ or $\oplus$, then $j$ is related to  the unique strategy $j'$, in the same group as $j$, such that $v_{j'}=1$.
\end{itemize}
\end{itemize}
\end{proposition}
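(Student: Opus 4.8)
The plan is to transport every assertion, via the quotient rule, to the matrix $A_v$ and its graph $G(A_v)$, and then reproduce in this reduced setting the Redheffer--Walter argument behind the corresponding Lotka--Volterra proposition of Section~\ref{lvs} (the one whose proof reads ``See~\cite{RW1984}''). First I would set up the reduction. Fix a formal equilibrium $q$ of $(\nund,A)$ and a positive diagonal matrix $D=\diag(d_i)$ of type $\nund$ with $Q_{AD}\le 0$ on $H_\nund$. Working block by block one checks, under the identification $\Vscr_v\equiv\{k:v_k=0\}$, the equalities $(AD)_v=A_vD_v$ and $(D^{-1}A)_v=D_v^{-1}A_v$, where $D_v:=\diag((d_i)_{v_i=0})$ is positive diagonal; in particular $A_v$ is dissipative, so $a^v_{ii}\le 0$ for every $i$ with $v_i=0$, and for $v\in V_{\nund,A}^{*}$ Lemma~\ref{A sd and D positive implies AD sd} makes $D_v^{-1}A_v$ stably dissipative as well. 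Since $q$ is also a formal equilibrium of $(\nund,D^{-1}A)$, Proposition~\ref{DQA rule} applied to $D^{-1}A$ yields, in the basis $\Bscr_v$,
$$Q_{D^{-1}A}(x-q)=Q_{D_v^{-1}A_v}(z)=Q_{A_vD_v}(D_v^{-1}z),\qquad z:=(x_k-q_k)_{v_k=0}.$$

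For item (1)(a) I would argue as follows. By Proposition~\ref{prop:La Salle poly}, $\attr{\nund,A}\subseteq\{x:Q_{D^{-1}A}(x-q)=0\}$, hence $Q_{A_vD_v}(D_v^{-1}z)=0$ on $\attr{\nund,A}$. As $A_v$ is stably dissipative and $D_v$ is a positive diagonal matrix with $Q_{A_vD_v}\le 0$, Lemma~\ref{lema2.1.ZL2010} gives $a^v_{ii}(D_v^{-1}z)_i=0$, i.e. $a^v_{ii}(x_i-q_i)=0$ on $\attr{\nund,A}$. Thus, when $v_i=0$ and $a^v_{ii}<0$, we get $x_i=q_i$ on $\attr{\nund,A}$, i.e. $i$ is of type $\bullet$.

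For (1)(b), (1)(c) and (2)(d) I would record first the dynamical identities. For $j$ with $v_j=0$, let $j'$ be the strategy of $j$'s group with $v_{j'}=1$. The quotient rule (Proposition~\ref{prop:quotient rule}) reads $\frac{d}{dt}\log(x_j/x_{j'})=R_j$ with $R_j:=\sum_{k:v_k=0}a^v_{jk}(x_k-q_k)$, and a direct computation (using the quotient rule together with $(Aq)_i=(Aq)_j$ inside a group) gives $X_{\nund,A}^{j}/x_j=R_j-\sum_{k}x_kR_k$, the sum over $k$ in $j$'s group, with the convention $R_{j'}:=0$. Hence on the flow-invariant set $\attr{\nund,A}$: ``$j$ of type $\bullet$'' gives $x_j\equiv q_j$, so also $X_{\nund,A}^{j}\equiv 0$; ``$j$ of type $\oplus$'' gives $X_{\nund,A}^{j}\equiv 0$; and ``$j$ related to $j'$'' means exactly $R_j\equiv 0$. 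Moreover in $R_j=a^v_{jj}(x_j-q_j)+\sum_{k}a^v_{jk}(x_k-q_k)$ (the last sum over the neighbours of $j$ in $G(A_v)$) the self-term vanishes because $a^v_{jj}\le 0$ (if $a^v_{jj}<0$ then $x_j\equiv q_j$ by (a)), and every term at a $\bullet$-neighbour vanishes; a neighbour $k$ with $a^v_{jk}=0\neq a^v_{kj}$ can occur only when $a^v_{jj}<0$ and $a^v_{kk}<0$, by the Zhao--Luo description of $A_v$ (Proposition~\ref{sd:char}), and then the conclusion on $k$ is already part of (a). I would then treat (2)(d) directly: if all neighbours of $j$ are $\bullet$ or $\oplus$, the above leaves $R_j\equiv\sum_{k\in N_\oplus}a^v_{jk}(x_k-q_k)$ on $\attr{\nund,A}$, with $N_\oplus$ the $\oplus$-neighbours; differentiating along the flow and using $X_{\nund,A}^{k}\equiv 0$ for $k\in N_\oplus$ gives $\dot R_j\equiv 0$, so $R_j$ is constant along each orbit in $\attr{\nund,A}$; since $R_j=\frac{d}{dt}\log(x_j/x_{j'})$ and, by Proposition~\ref{prop:limit quot rule}, $x_j/x_{j'}$ stays bounded on $\attr{\nund,A}$, that constant is $0$, so $j$ is related to $j'$. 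Items (1)(b) and (1)(c) would then follow the same peeling scheme as in the Lotka--Volterra proposition: from the type of $j$ one kills the contributions of $j$ and of all neighbours but $\ell$, isolating a single term at $\ell$ (for (b) directly, for (c) after one differentiation and $X_{\nund,A}^{k}\equiv 0$ at the $\oplus$-neighbours), and one concludes since the edge $\{j,\ell\}$ forces $a^v_{j\ell}\neq 0$ in the relevant case, using $x_\ell>0$ on $\attr{\nund,A}$ where a division is needed.

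The hard part, with no counterpart in the Lotka--Volterra case, is the group-average term $\sum_{k}x_kR_k$ in $X_{\nund,A}^{j}/x_j$: being of type $\bullet$ or $\oplus$ tells us only that $R_j$ equals this $x$-dependent (quadratic) quantity on $\attr{\nund,A}$, not that $R_j\equiv 0$, so in (1)(b) and (1)(c) this term must be carried through the peeling (and through the differentiation in (c)) and cancelled before the contribution of $\ell$ can be extracted; this is exactly why (2)(d) concludes ``$j$ related to $j'$'' (the genuine analogue of Lotka--Volterra type $\oplus$) rather than ``$j$ of type $\oplus$''. The recurring secondary point, used both to divide out factors $x_\ell$ and to upgrade ``constant along orbits'' to ``identically zero'', is that the quotients $x_i/x_j$ stay bounded on $\attr{\nund,A}$ — which is precisely what the hypothesis $q\in\inter(\Gamma_\nund)$ provides through Proposition~\ref{prop:limit quot rule}.
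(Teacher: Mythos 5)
Your treatment of items (a) and (d) follows the paper's own argument: (a) is exactly the combination of Proposition~\ref{prop:La Salle poly} with Lemma~\ref{lema2.1.ZL2010} applied to $\tilde A_v=D_v^{-1}A_v$, and (d) is the paper's quotient-rule argument (the $\bullet$/$\oplus$ neighbours make $R_j$ constant along orbits, and Proposition~\ref{prop:limit quot rule} forces that constant to be $0$). Your identity $X_{\nund,A}^{j}/x_j=R_j-\sum_{k\in\alpha}x_kR_k$ is correct, and your use of Proposition~\ref{sd:char} to dispose of the case $a^v_{j\ell}=0\neq a^v_{\ell j}$ makes explicit a point the paper leaves implicit. (A minor slip: in (2)(d) the vertex $v$ ranges over all of $V_{\nund}$, so item (a) is not available to kill the self-term $a^v_{jj}(x_j-q_j)$; for (d) one only needs that term to be constant along orbits.)

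The genuine gap is in (b) and (c). The paper's proof of these two items rests on a single key assertion: for the relevant $j$, the \emph{row} relation $\sum_{(k,\ell)\in\Vscr_v}\tilde A_{(j,j'),(k,\ell)}(x_k-q_k)=0$ holds on $\attr{\nund,A}$, which the paper extracts from Proposition~\ref{prop:La Salle poly}; once that linear relation in the variables $x_k-q_k$ is available, the peeling is immediate. You replace it by the dynamical relation $X_{\nund,A}^{j}/x_j=0$, which yields only $R_j=\sum_{k\in\alpha}x_kR_k$, and you then assert that the group-average term ``must be carried through the peeling and cancelled'' without ever exhibiting the cancellation. This is not a detail one can defer: the relation $R_j=\sum_{k\in\alpha}x_kR_k$ couples row $j$ to \emph{all} rows of its group with $x$-dependent coefficients, so the Lotka--Volterra peeling --- which isolates the single term $a^v_{j\ell}(x_\ell-q_\ell)$ from a relation involving only row $j$ --- simply does not apply to it, and substituting the $\bullet$/$\oplus$ information into the other rows $R_k$ does not obviously make the right-hand side vanish. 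You have correctly diagnosed that this is precisely where the polymatrix case departs from the Lotka--Volterra one, but diagnosing the obstruction is not the same as removing it: as written, (b) and (c) are not proved. To close the gap you must establish the row relation $R_j=0$ on the attractor for $j$ of type $\bullet$ or $\oplus$ (this is the content of the paper's appeal to the vanishing of $Q_{D^{-1}A}(x-q)$ on $\attr{\nund,A}$), or find some other mechanism that decouples row $j$ from the rest of its group.
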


\begin{proof}
The proof involves the manipulation of algebraic relations holding on the attractor.
To simplify the terminology we will say that some algebraic relation holds to mean that it holds on the attractor.

Choose a positive diagonal matrix $D$ of type $\nund$ such that $Q_{AD}\leq 0$ on $H_\nund$,
and set $\tilde A := D^{-1} A$.
By Lemma~\ref{A sd and D positive implies AD sd},
for any $v\in V_\nund$, the matrices 
$A_v$ and $\tilde A_v$ have the same dissipative and stably dissipative character.
Hence $V_{\nund,A}^\ast= V_{\nund,\tilde A}^\ast$.

%%% proof of (1.a)
Given $v\in V_{\nund,A}^\ast$, for any solution $x(t)$ of the polymatrix replicator in the attractor, we have that $Q_{\tilde A_v}\left(x(t)-q\right)=0$.
Hence, as $\tilde A_v$ is stably dissipative and $a_{ii}^v<0$,
by Lemma~\ref{lema2.1.ZL2010} follows that $x_i(t)=q_i$ on the attractor,
which proves $(a)$.

%%% proof of (1.b)
Given $v\in V_{\nund,A}^*$ we have that $\tilde A_v$ is stably dissipative.
By Proposition~\ref{prop:La Salle poly}, we obtain 
$$\sum_{(k,\ell)\in \Vscr_v} \tilde A_{(j,j'),(k,\ell)}(x_k-q_k)=0 $$
on the attractor.

Observe that if $j$ is of type $\bullet$, then $x_j=q_j$, and if
$j$ is of type $\oplus$, then $a^v_{jj}=A_{(j,j'),(j,j')}=0$,
where $j'$ is the unique strategy  in the same group as $j$  such that $v_{j'}=1$.

Let $j,\ell$ be neighbour vertices in the graph $G(A_v)$.

Let us prove $(b)$.
If $j$ is of type $\bullet$ or $\oplus$ and all of its neighbours are of type $\bullet$, except for  $\ell$, then
$$ \tilde A_{(j,j'),(\ell,\ell')}(x_\ell-q_\ell)=0\,, $$
from which follows that $x_\ell=q_\ell$ because
$A_{(j,j'),(\ell,\ell')} = d_j \tilde A_{(j,j'),(\ell,\ell')} \neq 0\,$, where $\ell'$ is the unique strategy  in the same group as $\ell$  such that $v_{\ell'}=1$.
This proves $(b)$.

%%% proof of (1.c)
Let us prove $(c)$.
If $j$ is of type $\bullet$ or $\oplus$ and all of its neighbours are of type $\bullet$ or $\oplus$, except for  $\ell$, then
$$ A_{(j,j'),(\ell,\ell')}(x_\ell-q_\ell)=c\,, $$
for some constant $c$. Hence  because
$A_{(j,j'),(\ell,\ell')}\neq 0$, $x_\ell$ is constant  which proves $(c)$.

%%% proof of (1.d)
Let us prove $(d)$. Suppose all  neighbours of a strategy $j$
are of type $\bullet$ or $\oplus$. By the  polymatrix quotient
rule (see~Proposition~\ref{prop:quotient rule}),
$$ 
\frac{d}{dt}\left(\frac{x_j}{x_{j'}}\right)=
\frac{x_j}{x_{j'}}\, \sum_{(k,\ell)\in \Vscr_v}A_{(j,j'),(k,\ell)}\, (x_k-q_k) \;.
$$

Since all neighbours of $j$ are of type $\bullet$ or $\oplus$ we obtain
$$ \frac{d}{dt}\left(\frac{x_j}{x_{j'}}\right)= \frac{x_j}{x_{j'}}\, C \;, $$
for some constant $C$. Hence
$$ \frac{x_j}{x_{j'}}=B_0\,e^{Ct}  \;, $$
where $B_0=\frac{x_j(0)}{x_{j'}(0)}$. By Proposition~\ref{prop:limit quot rule}
we have that the constant $C$ must be $0$.
Hence there exists a constant $B_0>0$ such that $\frac{x_j}{x_{j'}}=B_0$, which proves $(d)$. 
\end{proof}

\begin{proposition}
If in a group $\alpha$  all strategies  are of type $\bullet$ (respectively of type $\bullet$ or $\oplus$) except for one strategy $i$, then $i$ is of type $\bullet$ (respectively of type $\oplus$).
\end{proposition}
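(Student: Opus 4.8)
The plan is to exploit the fact that the usage frequencies in the group $\alpha$ sum to $1$, so the one remaining strategy $i$ is algebraically determined by the others, together with the analogous statement for its velocity. Let $\alpha=\{i,j_1,\dots,j_r\}$ with all $j_s$ of type $\bullet$ (resp. of type $\bullet$ or $\oplus$). On the attractor $\attr{\nund,A}$ we have, by definition of the prism $\Gamma_{\nund}$, that $x_i + x_{j_1} + \cdots + x_{j_r} = 1$; differentiating along the flow of $X_{\nund,A}$, we also have $X_{\nund,A}^i + \sum_{s} X_{\nund,A}^{j_s} = 0$ on all of $\Gamma_{\nund}$.

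For the first statement, if every $j_s$ is of type $\bullet$ then $x_{j_s} = q_{j_s}$ on the attractor for each $s$. Substituting into $x_i = 1 - \sum_s x_{j_s}$ gives $x_i = 1 - \sum_s q_{j_s}$ on the attractor. But $q$ is a formal equilibrium, so $\sum_{k\in\alpha} q_k = 1$ by Definition~\ref{defi:formal_equilibrium}(b), i.e. $q_i = 1 - \sum_s q_{j_s}$. Hence $x_i = q_i$ on $\attr{\nund,A}$, which is exactly the assertion that $i$ is of type $\bullet$. For the second statement, if every $j_s$ is of type $\bullet$ or $\oplus$ then $X_{\nund,A}^{j_s} = 0$ on the attractor for each $s$ (this is immediate from the definition of type $\oplus$, and a fortiori from type $\bullet$, since $x_{j_s}=q_{j_s}$ constant forces $\dot x_{j_s}=0$). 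Then $X_{\nund,A}^i = -\sum_s X_{\nund,A}^{j_s} = 0$ on the attractor, which says precisely that $i$ is of type $\oplus$.

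I do not anticipate a serious obstacle here: the statement is essentially a bookkeeping consequence of the simplex constraint $\sum_{k\in\alpha}x_k\equiv 1$ on $\Gamma_{\nund}$ and the matching constraint $\sum_{k\in\alpha}q_k=1$ built into the notion of formal equilibrium. The only point requiring a word of care is the passage, in the $\oplus$ case, from ``$j_s$ is of type $\bullet$'' to ``$X_{\nund,A}^{j_s}=0$ on the attractor'': this holds because type $\bullet$ means $x_{j_s}$ is identically $q_{j_s}$ on the (flow-invariant) attractor, so its time derivative vanishes there. With that observation the argument closes in both cases.
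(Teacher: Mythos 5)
Your proof is correct and follows essentially the same route as the paper's: both arguments rest on the constraint $\sum_{k\in\alpha}x_k=1$ on $\Gamma_{\nund}$ (equivalently $\sum_{k\in\alpha}X_{\nund,A}^k=0$), combined with $\sum_{k\in\alpha}q_k=1$ in the $\bullet$ case and the constancy of the $\bullet$/$\oplus$ coordinates along orbits in the $\oplus$ case. The paper substitutes directly to show $x_i$ is constant along orbits, while you differentiate the constraint; these are the same argument in two phrasings.
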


\begin{proof}
Suppose that in a group $\alpha$ all strategies are
of type  $\bullet$ or $\oplus$ except for one strategy $i$.
We have that $x_k=c_k$, for some constant $c_k$, for each $k\neq i$.
Thus, 
$$x_i=1-\sum_{\substack{j\in\alpha \\ j\neq i}} x_j=1-\sum_{j=\bullet} x_j-\sum_{k=\oplus} x_k=1-\sum_{j=\bullet} q_j-\sum_{k=\oplus} c_k\,.$$
Hence $i$ is of type $\oplus$.

If in a group $\alpha$ all strategies are
of type  $\bullet$, the proof is analogous.
\end{proof}

\begin{proposition}
Assume that in a group $\alpha$  with $n$ strategies, $n-k$ of them, with $0\le k<n$,
are of type $\bullet$  or $\oplus$, and denote by $S$ the set of the remaining $k$ strategies.
If the graph with vertex set $S$, obtained drawing an edge between every pair of related strategies in $S$, is connected, then all strategies in $S$ are of type $\oplus$.
\end{proposition}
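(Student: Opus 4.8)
The plan is to combine the simplex constraint $\sum_{i\in\alpha}x_i=1$ with two facts about the orbits contained in the attractor $\attr{\nund,A}$: along such an orbit a strategy of type $\bullet$ or $\oplus$ has constant frequency (constantly equal to $q_i$ in the $\bullet$ case), and two related strategies $i,j$ have constant ratio $x_i/x_j$. Since $\attr{\nund,A}$ is flow-invariant it is a union of orbits, so proving that every $i\in S$ is of type $\oplus$ reduces to showing that $x_i$ is constant along each orbit contained in $\attr{\nund,A}$.

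I would fix such an orbit $x(t)$. Each of the $n-k$ strategies in $\alpha\setminus S$ is of type $\bullet$ or $\oplus$, so $x_j(t)$ is constant along the orbit for every $j\in\alpha\setminus S$, and therefore
$$ c:=\sum_{i\in S}x_i(t)=1-\sum_{j\in\alpha\setminus S}x_j(t) $$
is constant along the orbit. I would then fix a reference strategy $i_0\in S$: by connectivity of the relatedness graph on $S$, for each $i\in S$ there is a path $i_0=j_0,j_1,\dots,j_m=i$ of pairwise related strategies, and along the orbit each factor $x_{j_\ell}/x_{j_{\ell+1}}$ is constant, so the product $r_i:=x_i/x_{i_0}$ is constant along the orbit, with $r_{i_0}=1$. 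Substituting $x_i=r_i\,x_{i_0}$ into the displayed identity gives $c=x_{i_0}\sum_{i\in S}r_i$ with $\sum_{i\in S}r_i\ge r_{i_0}=1>0$, so $x_{i_0}=c/\sum_{i\in S}r_i$ is constant along the orbit, and hence so is each $x_i=r_i\,x_{i_0}$. As the orbit was arbitrary, every strategy of $S$ is of type $\oplus$.

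The step I expect to require the most care is not the algebra above but the meaning of ``related'' on orbits that touch the boundary of $\Gamma_\nund$, where a ratio $x_i/x_j$ can degenerate. This is handled using the invariance of the coordinate hyperplanes $\{x_i=0\}$ under the replicator flow, which (together with connectedness) forces the strategies of $S$ to be, on each orbit, either all identically zero---a trivial case, with $c=0$---or all strictly positive, and in the latter case the argument above applies verbatim; Proposition~\ref{prop:limit quot rule} moreover controls the ratios on interior orbits. For $k=1$ the statement recovers the $\oplus$-part of the preceding proposition.
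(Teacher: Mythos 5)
Your proof is correct and follows essentially the same route as the paper's: the paper uses a spanning tree of the relatedness graph to extract $k-1$ ratio relations and combines them with the simplex constraint $\sum_{i\in S}x_i=1-C$ to get a uniquely solvable linear system, which is exactly your reference-strategy parametrization $x_i=r_i x_{i_0}$ in matrix form. Your version is in fact slightly more careful, since you make explicit why the system pins down $x_{i_0}$ (namely $\sum_i r_i>0$) and you address the degenerate boundary case, both of which the paper leaves implicit.
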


\begin{proof}
Since all strategies in $\,\alpha\setminus S\,$ are of type $\bullet$  or
$\oplus$, for the strategies in $S$ we have that
\begin{equation}\label{alpha_minus_S}
\sum_{i\in S} x_i=1-C\,,
\end{equation}
where $C=\sum_{j\in\alpha\setminus S} x_j$.

Let $G_S$ be the graph with vertex set $S$ obtained drawing an edge between every pair of related strategies in $S$. 
Since $G_S$ is connected we have that it contains a tree.
Considering the $k-1$ relations between the strategies in $S$ given by that tree,
we have $k-1$ linearly independent equations of the form $x_i=C_{ij}x_j$ for pairs of
strategies $i$ and $j$ in $S$, where $C_{ij}$ is a constant. Together with~\eqref{alpha_minus_S} we obtain $k$ linear independent equations for the $k$ strategies in $S$, which implies that $x_i=\text{constant}$, for every $i\in S$. This concludes the proof.
\end{proof}

Based on these facts we introduce a reduction algorithm\index{reduction algorithm!polymatrix} on the set of graphs
$ \{\,G(A_v) \,:\, v\in V_{\nund}\, \} $ to derive information on the strategies
of an admissible polymatrix game $(\nund,A)$.

In each step, we also register the information obtained about each strategy in what we call the ``information set'', where all strategies of the polymatrix are represented.

The algorithm is about labelling (or colouring)  strategies with the \linebreak
 ``colours'' $\bullet$ and $\oplus$.
The algorithm acts upon all graphs $G(A_v)$ with $v\in V_{\nund}$ as well as on the
information set.
It is implicit that after each rule application, the
new labels (or colours) are transferred between the graphs $G(A_v)$ and the information set,
that is, if in a graph $G(A_v)$ a strategy $i$ has been coloured
$i=\bullet$, then in all other graphs containing the strategy $i$,
we colour it  $i= \bullet$, as well on the information set.

Some rules just can be applied to graphs $G(A_v)$ such that $v\in V_{\nund,A}^*$,
while others can be applied to all graphs.

\begin{PolyRule}\label{Rule_a}
Initially, for each graph $G(A_v)$ such that $v\in V_{\nund,A}^*$ colour in black ($\bullet$) any strategy $i$  such that $a_{ii}^v<0$.
Colour in white ($\circ$) all other strategies.
\end{PolyRule}

The reduction procedure consists in applying the following rules, corresponding to valid inferences rules. For each graph $G(A_v)$ such that $v\in V_{\nund,A}^*$:

\begin{PolyRule}\label{Rule_b}
If  $i$ has colour $\bullet$ or $\oplus$ and all neighbours of $i$ but $j$ in $G(A_v)$ are $\bullet$, then   colour $j=\bullet$.
\end{PolyRule}

\begin{PolyRule}\label{Rule_c}
If $i$ has colour $\bullet$ or $\oplus$ and all neighbours of $i$ but $j$ in $G(A_v)$ are $\bullet$ or $\oplus$, then   colour $j=\oplus$.
\end{PolyRule}

For each graph $G(A_v)$ such that $v\in V_{\nund}$:

\begin{PolyRule}\label{Rule_d}
If $i$ has colour $\circ$ and all neighbours of $i$ in $G(A_v)$ are $\bullet$ or $\oplus$, then we put a link between strategies $j$ and $j'$ in the ``information set'', where $j'$ is the unique strategy such that $v_{j'}=1$ and $j'$ is in the same group as $j$.
\end{PolyRule}

The following rules can be applied to the set of all strategies of the polymatrix game.

\begin{PolyRule}\label{Rule_e}
If in a group all strategies have colour $\bullet$ (respectively, $\bullet , \oplus$) except for one strategy $i$, then colour $i=\bullet$ (respectively, $i=\oplus$).
\end{PolyRule}

\begin{PolyRule}\label{Rule_f}
If in a group some strategies have colour $\bullet$  or $\oplus$, 
and the remaining strategies are related forming a connected graph,
then colour with $\oplus$ all that remaining strategies.
\end{PolyRule}

We define the  {\em reduced information set}   $\Rscr(\nund,A)$
as the $\{\bullet ,  \oplus,  \circ\}$-coloring on the set of strategies $\{1,\ldots, n\}$, which is obtained    by successive applications to the graphs $G(A_v)$, $v\in V_\nund$, of the reduction rules  1-6,  until they can no longer be applied.

\begin{proposition}
\label{prop poly reduced}
Let $(\nund,A)$ be an admissible polymatrix game,
and consider the associated polymatrix replicator~\eqref{ode:pmg}
with an interior equilibrium
$q\in {\rm int}(\Gamma_\nund)$.

\begin{enumerate}
\item If all vertices of $\Rscr(\nund,A)$ are $\bullet$ then 
$q$ is the unique globally attractive equilibrium.
\item If $\Rscr(\nund,A)$ has only $\bullet$ or $\oplus$ vertices then 
there exists an invariant foliation with a unique globally attractive equilibrium in each leaf.
\end{enumerate}
\end{proposition}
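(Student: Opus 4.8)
The plan is to mirror the proof of the corresponding Lotka--Volterra statement (the proposition following Rules~2--4 for LV systems), combining two ingredients: \textbf{soundness of the reduction algorithm} (the terminal coloring $\Rscr(\nund,A)$ really does describe the behaviour of the flow on the attractor $\attr{\nund,A}$), and a \textbf{La~Salle/$\omega$-limit argument} based on the Lyapunov function and the invariant foliation already constructed.

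\textbf{Step 1 (soundness).} First I would check that Rules~\ref{Rule_a}--\ref{Rule_f} are valid inference rules, so that after termination every $\bullet$-strategy $i$ satisfies $\attr{\nund,A}\subseteq\{x\in\Gamma_\nund\colon x_i=q_i\}$ and every $\oplus$-strategy $i$ satisfies $\attr{\nund,A}\subseteq\{x\in\Gamma_\nund\colon X_{\nund,A}^i(x)=0\}$. Each rule is exactly one of the statements proved earlier in this section: Rule~\ref{Rule_a} is item $(a)$ of the proposition on the graphs $G(A_v)$, $v\in V_{\nund,A}^*$; Rules~\ref{Rule_b} and~\ref{Rule_c} are items $(b)$ and $(c)$ of that proposition; Rule~\ref{Rule_d} is item $(d)$ (it produces a genuine ``related'' link); Rule~\ref{Rule_e} is the proposition about a group in which all strategies but one share a type; and Rule~\ref{Rule_f} is the proposition about a group whose remaining strategies form a connected graph of related strategies. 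Since admissibility forces $V_{\nund,A}^*\neq\emptyset$ and the labels are transferred coherently between the graphs $G(A_v)$ and the information set, an induction on the number of rule applications yields the claim.

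\textbf{Step 2 ($\omega$-limits) and Step 3 (conclusion).} Because $q\in\inter(\Gamma_\nund)$, the Lyapunov function $h$ of Proposition~\ref{prop Lyapunov function} is proper on $\inter(\Gamma_\nund)$, so every orbit starting in $\inter(\Gamma_\nund)$ stays in a compact sublevel set $K=\{h\le a\}\subset\inter(\Gamma_\nund)$ and therefore has a nonempty, compact, connected, flow-invariant $\omega$-limit contained in $\attr{\nund,A}$. For item $(1)$: if every strategy is of type $\bullet$, then $\attr{\nund,A}\subseteq\{x\colon x_i=q_i\text{ for all }i\}=\{q\}$, hence $\omega(x_0)=\{q\}$ for every $x_0\in\inter(\Gamma_\nund)$, i.e.\ $q$ is globally attracting. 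For item $(2)$: if every strategy is of type $\bullet$ or $\oplus$, then using that $\attr{\nund,A}$ is invariant (so $x_i\equiv q_i$ on it forces $\dot x_i=0$ there) together with the definition of type $\oplus$ we get $X_{\nund,A}^i=0$ on $\attr{\nund,A}$ for all $i$, so $\attr{\nund,A}$ consists entirely of equilibria. I would then invoke Proposition~\ref{prop invariant foliation} to get an $X_{\nund,A}$-invariant foliation $\Fscr$ of $\inter(\Gamma_\nund)$ whose leaves carry exactly one equilibrium each; since the components of the submersion $g$ of that proposition are first integrals, each orbit stays in its leaf, so $\omega(x_0)$ is a connected set of equilibria lying in a single leaf and hence reduces to that leaf's unique equilibrium. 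Thus every leaf is attracted by its equilibrium.

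The only genuinely delicate point is Step~1: one must verify that each rule, as stated for the whole \emph{family} of graphs $G(A_v)$ together with the label-transfer convention (in particular that the ``related'' links produced by Rule~\ref{Rule_d} are the same notion used in Rules~\ref{Rule_e}--\ref{Rule_f}), is faithfully covered by the propositions above. Once that is in place, Steps~2--3 are a routine La~Salle argument using Proposition~\ref{prop invariant foliation}.
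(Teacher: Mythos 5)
Your proposal is correct and follows essentially the same route as the paper: the paper's proof likewise reads off item (1) directly from all strategies being of type $\bullet$, and for item (2) combines the fact that every orbit converges to an equilibrium with Proposition~\ref{prop invariant foliation} to conclude that the unique equilibrium in each leaf is globally attractive. Your write-up merely makes explicit two points the paper leaves implicit, namely the soundness of the reduction rules (already justified by the preceding propositions) and the La~Salle/properness argument showing $\omega$-limits are nonempty, lie in the interior, and stay in a single leaf.
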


\begin{proof}
Item (1) is clear because if all strategies are of type $\bullet$
then for every orbit $x(t)=(x_1(t),\ldots, x_n(t))$ of ~\eqref{ode:pmg}, and every $i=1,\ldots, n$, one has\, 
$\lim_{t\to +\infty} x_i(t)=q_i$.

Likewise, if $\Rscr(\nund,A)$ has only $\bullet$ or $\oplus$ vertices
then every orbit of ~\eqref{ode:pmg} converges to an equilibrium point, which depends on the initial condition.
But by Proposition~\ref{prop invariant foliation} there exists an
invariant foliation $\Fscr$ with a single equilibrium point in each leaf. Hence, the unique equilibrium point in each leaf of $\Fscr$ must be globally attractive. 
\end{proof}

The following definition corresponds to a one-step reduction
of the attractor dynamics.

\begin{defn}\label{nl,Al}
Given a polymatrix game $(\nund,A)$, a strategy $\ell\in \alpha$, for some group $\alpha$,
and a point $q\in\inter\left(\Gamma_{\nund}\right)$, we call
{\em $(q,\ell)$-reduction of $(\nund,A)$} to a new polymatrix game $(\nund(\ell),A(\ell))$ obtained removing the strategy $\ell$ from the group $\alpha$, where
$\nund (\ell):=(n_1,\dots, n_{\alpha-1}, n_{\alpha}-1,n_{\alpha+1},\dots ,n_p)$,
and  the matrix $A(\ell)=(a_{ij}(\ell))$ indexed in 
$\{1,\dots, \ell-1,\ell+1,\dots, n\}$ has the following entries:
\begin{equation}\label{entries of A(l)}
a_{ij}(\ell):=\left \{ \begin{array}{ll}
a_{ij}-a_{lj}  & \text{ if } \; j\notin \alpha\\
(a_{ij}-a_{lj})(1-q_\ell)+(a_{il}-a_{ll})q_\ell  & \text{ if } \; j\in \alpha\setminus\{\ell\}\;.
\end{array} \right.
\end{equation}
\end{defn}

The map
$\psi_\ell:\Gamma_{\nund}\cap\{x_\ell=q_\ell\} \to \Gamma_{\nund(\ell)}$,
  $\psi_\ell(x)=\xred{\ell}=(x_j)_{j\neq \ell}$, defines  a natural  identification.

\begin{proposition}\label{Reduced_Poly_1}
Let $(\nund,A)$ be a polymatrix game with an equilibrium
$q\in\inter\left(\Gamma_{\nund}\right)$.
Given a  strategy $\ell\in \alpha$, for some group $\alpha$, 
the $(q,\ell)$-reduction $(\nund(\ell),A(\ell))$ of $(\nund,A)$ is such that if $x\in \Gamma_{\nund}\cap \{x_\ell=q_\ell\}$ and
$X_{\nund,A}(x)$ is tangent to $\{x_\ell=q_\ell\}$, that is $X_{\nund,A}^\ell(x)=0$, then for all $j\neq \ell$,
$$ X_{\nund,A}^j(x) = X_{\nund(\ell), A(\ell)}^j(\xred{\ell})\;.$$
\end{proposition}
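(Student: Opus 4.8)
This is a direct computation: one expands both sides of the claimed identity using the defining equation~\eqref{ode:pmg} and the formula~\eqref{entries of A(l)} for $A(\ell)$, and simplifies with the three relations available at the point $x$. These are: $x_\ell = q_\ell$; the prism identities $\sum_{m\in\beta} x_m = 1$ for every group $\beta$ (so in particular $\sum_{m\in\alpha\setminus\{\ell\}} x_m = 1-q_\ell$); and the hypothesis $X_{\nund,A}^\ell(x)=0$, which — since $x_\ell=q_\ell>0$ — is equivalent to $(Ax)_\ell = \sum_{k\in\alpha} x_k (Ax)_k$. I will write $\bar A_\gamma := \sum_{k\in\gamma} x_k (Ax)_k$ for the average payoff of group $\gamma$, so that the hypothesis reads simply $(Ax)_\ell = \bar A_\alpha$.

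\textbf{Step 1: the reduced payoff.} First I would compute $(A(\ell)\,\xred{\ell})_j$ for an arbitrary index $j\neq\ell$. Substituting~\eqref{entries of A(l)} and splitting the sum over columns $m\neq\ell$ into the parts $m\notin\alpha$ and $m\in\alpha\setminus\{\ell\}$, one uses $\sum_{m\in\alpha\setminus\{\ell\}}x_m = 1-q_\ell$ to collapse the $m$-independent factor produced by the term $(a_{i\ell}-a_{\ell\ell})q_\ell$. The upshot is that $(A(\ell)\,\xred{\ell})_j$ equals $(Ax)_j$ up to corrections that are either independent of $j$ (the pieces built out of row $\ell$ of $A$) or that are ``constant-rows'' contributions in the block sense. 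It streamlines the bookkeeping to first invoke Proposition~\ref{equivalent matrices prop} together with Lemma~\ref{lemma equivalence}: the summands $-a_{\ell m}$ in~\eqref{entries of A(l)} depend only on the column $m$, and $-a_{\ell\ell}q_\ell$ is an outright constant, so removing them replaces $A(\ell)$ by an equivalent polymatrix game with the \emph{same} vector field $X_{\nund(\ell),A(\ell)}$; hence one may assume from the start that $A(\ell)$ is already in this trimmed form.

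\textbf{Step 2: the two cases.} If $j\notin\alpha$, the group of $j$ is untouched by the $(q,\ell)$-reduction, so $X_{\nund(\ell),A(\ell)}^j(\xred{\ell}) = x_j\bigl[(A(\ell)\,\xred{\ell})_j - \sum_{k\in\gamma} x_k (A(\ell)\,\xred{\ell})_k\bigr]$ with $\sum_{k\in\gamma}x_k=1$; plugging in Step~1, the $j$-independent corrections cancel against their averaged counterparts and what remains is exactly $x_j\bigl[(Ax)_j - \bar A_\gamma\bigr] = X_{\nund,A}^j(x)$ — note that the hypothesis $X^\ell_{\nund,A}(x)=0$ is \emph{not} needed here. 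If instead $j\in\alpha\setminus\{\ell\}$, then in the reduced game the group $\alpha$ has lost the strategy $\ell$ and the competition term is $\sum_{k\in\alpha\setminus\{\ell\}}x_k(A(\ell)\,\xred{\ell})_k$, where now $\sum_{k\in\alpha\setminus\{\ell\}}x_k = 1-q_\ell$ rather than $1$. This is precisely where the hypothesis enters: $(Ax)_\ell=\bar A_\alpha$ gives $\sum_{k\in\alpha\setminus\{\ell\}}x_k(Ax)_k = \bar A_\alpha - q_\ell(Ax)_\ell = (1-q_\ell)\bar A_\alpha$, so that the averaged payoff over the reduced group $\alpha$ is again $\bar A_\alpha$, and the same cancellation as in the first case produces $X_{\nund,A}^j(x)$.

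\textbf{Main obstacle.} There is no conceptual difficulty; the whole weight of the argument is the algebraic bookkeeping: organizing the column-split of $(A(\ell)\,\xred{\ell})_j$ in Step~1, keeping the group-$\alpha$ normalization equal to $1-q_\ell$ (not $1$) consistently at every stage, and isolating the single place — the reduced average over group $\alpha$ in the case $j\in\alpha$ — where $X^\ell_{\nund,A}(x)=0$ is actually invoked. Reducing $A(\ell)$ to its trimmed representative via the equivalence relation before computing is what keeps this manageable; the rest is routine manipulation of the quadratic expressions in~\eqref{ode:pmg}.
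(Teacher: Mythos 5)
Your proposal is correct and takes essentially the same route as the paper's proof: the paper also normalizes away the row-$\ell$ contributions via the equivalence relation (it sets $a_{\ell j}=0$ for all $j$ before computing, which is the same trimming you perform on $A(\ell)$), then does the column-split using $\sum_{m\in\alpha\setminus\{\ell\}}x_m=1-q_\ell$, and invokes the tangency hypothesis $X^{\ell}_{\nund,A}(x)=0$ exactly once, to control the averaged payoff of the reduced group $\alpha$. The only point to watch is that the identification $\psi_\ell$ necessarily rescales the group-$\alpha$ coordinates by $1/(1-q_\ell)$ (otherwise $\xred{\ell}\notin\Gamma_{\nund(\ell)}$), so for $j\in\alpha\setminus\{\ell\}$ the clean statement is that $\psi_\ell$ pushes $X_{\nund,A}$ forward to $X_{\nund(\ell),A(\ell)}$ (the component picks up the Jacobian factor $1/(1-q_\ell)$); your Step~2 glosses over this, but so does the proposition's own formulation, and the paper's proof likewise establishes precisely the push-forward identity.
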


\begin{proof}
Suppose that for some $\alpha\in\{1,\dots,p\}$ there exists $\ell\in\alpha$ such that
$x\in \Gamma_{\nund}\cap \{x_\ell=q_\ell\}$ and $X_{\nund,A}^\ell(x)=0$.

Since $\sum_{\substack{j\in\alpha \\ j\neq \ell}}x_j=1-q_\ell$, considering the change of variables
\begin{equation}\label{change_var}
y_j=\left\{
\begin{array}{ll}
\frac{x_j}{1-q_\ell}  \quad & \text{ if } \; j\in\alpha\setminus \{\ell\}\\
x_j  & \text{ if } \; j\notin \alpha 
\end{array}
\right.\;,
\end{equation}
we have that $\sum_{j\in\alpha\setminus \{\ell\}}y_j=1$\,.

By Proposition~\ref{equivalent matrices prop}, we can assume $A=(a_{ij})$ has all entries equal to zero in row $\ell$, i.e., $a_{lj}=0$ for all $j$. Thus we obtain 
\begin{equation}\nonumber
\frac{dx_\ell}{dt}=x_\ell\left( -\sum_{\beta=1}^p (x^{\alpha})^ t A^{\alpha\beta} x^{\beta}\right)\,.
\end{equation}

Hence, making $x_\ell=q_\ell$,  the replicator equation~\eqref{ode:pmg} becomes
\vspace{-2.5mm}
\begin{itemize}\setlength\itemsep{-1mm}
	\item[(i)] if $i\in\alpha\setminus \{\ell\}$, 
	\begin{eqnarray}\label{rep_eq_alpha}
	\frac{dx_i}{dt}=x_i\left( \sum_{\substack{j=1\\ j\neq \ell}}^na_{ij}x_j+a_{il}q_\ell-
	\sum_{\substack{k\in\alpha\\ k\neq \ell}}\sum_{j=1}^n a_{kj}x_kx_j \right)
	\end{eqnarray}
	
	\item[(ii)] if $i\in\beta\neq\alpha$, the equation is essentially the same, with $x_\ell=q_\ell$. 
\end{itemize}

Observe that $\sum_{\beta=1}^p (x^{\alpha})^ t A^{\alpha\beta} x^{\beta}=0$ because we are assuming that $x\in \Gamma_{\nund}\cap \{x_\ell=q_\ell\}$ and
$X_{\nund,A}^\ell(x)=0$.  

Hence we can add 
  $$-\frac{q_\ell}{1-q_\ell}\sum_{\beta=1}^p (x^{\alpha})^ t A^{\alpha\beta} x^{\beta}$$ to each equation for $\frac{dx_i}{dt}$, with $i\in\alpha\setminus \{\ell\}$, without changing the
vector field $X_{\nund,A}$ at the points $x\in \Gamma_{\nund}\cap \{x_\ell=q_\ell\}$ where 
$X_{\nund,A}(x)$ is tangent to $\{ x_\ell=q_\ell\}$.
So equation ~\eqref{rep_eq_alpha} becomes
\begin{equation}\label{rep_eq_alpha_2}
	\frac{dx_i}{dt}=x_i\left( \sum_{\substack{j=1\\ j\neq \ell}}^na_{ij} x_j+a_{il}q_\ell-
	\frac{1}{1-q_\ell}\sum_{\substack{k\in\alpha\\ k\neq \ell}}\sum_{j=1}^n a_{kj}x_kx_j \right)
\end{equation}

Now, using the change of variables~\eqref{change_var}, equation~\eqref{rep_eq_alpha_2} becomes
\begin{equation}\label{rep_eq_alpha_3}
\frac{dy_i}{dt}=y_i\left( f_i-\sum_{\substack{k\in\alpha\\ k\neq \ell}}y_kf_k \right) \quad (i\in \alpha) \;,
\end{equation}
where $f_i=\sum_{j\in\alpha\setminus \{\ell\}} a_{ij}(1-q_\ell)y_j+a_{il}q_\ell
			+\sum_{j\notin\alpha} a_{ij} y_j$.

Let $\check{\alpha}\equiv \alpha\setminus\{\ell\}$.
Setting $a_{il}q_\ell=a_{il}q_\ell(\sum_{j\in\check{\alpha}} y_j)$,
\begin{equation}\label{rep_eq_alpha_4}
\frac{dy_i}{dt}=y_i\left( g_i-\sum_{k\in\beta} y_k g_k \right), \quad i\in\beta,\; \beta\in\{1,\dots,p\} \;,
\end{equation}
where 
$g_i=\sum_{j\in\check{\alpha}}(a_{ij}(1-q_\ell)+a_{il}q_\ell)y_j+ \sum_{j\notin\check{\alpha}}a_{ij} y_j$,
defines a new polymatrix game in dimension $n-1$.
In fact,~\eqref{rep_eq_alpha_4} is the replicator equation of the polymatrix game $(\nund(\ell),A(\ell))$, where, since we have assumed that $a_{lj}=0$ for all $j$,~\eqref{entries of A(l)} becomes
\begin{equation}\nonumber
a_{ij}(\ell)=\left \{ \begin{array}{ll}
a_{ij}  & \text{ if } \; j\notin\check{\alpha}\\
a_{ij}(1-q_\ell)+a_{il}q_\ell  & \text{ if } \; j\in\check{\alpha} \;.
\end{array} \right.
\end{equation}
\end{proof}

\begin{remark}\label{group of cardinal 2}
Under the assumptions of Proposition~\ref{Reduced_Poly_1},
when  $n_\alpha=2$, considering for instance that the group $\alpha$ consists of strategies $\ell-1$ and $\ell$,  $x_\ell=q_\ell$ implies that $x_{\ell-1 }=1-q_\ell=q_{\ell-1}$.
Hence we can further reduce the polymatrix game  $(\nund(\ell),A(\ell))$ to a new polymatrix game 
with  type $(n_1,\dots, n_{\alpha-1}, n_{\alpha+1},\dots ,n_p)$ 
and payoff matrix  
indexed in $\{1,\dots, \ell-2,\ell+1,\dots, n\}$.
\end{remark}

\begin{corollary}
Let $(\nund,A)$ be a polymatrix game with an equilibrium
$q\in\inter\left(\Gamma_{\nund}\right)$.
Given a  set $Q\subset \{1,\ldots, n\}$ of strategies
such that 
$$ \attr{\nund,A} \subseteq \bigcap_{\ell\in Q} \{x_\ell=q_\ell\}\;, $$
then  there exists a new polymatrix game $(\mund,B)$, where $m_\alpha=\abs{\alpha\setminus Q}$ for every $\alpha=1,\ldots, p$,  and an identification $\psi:\Gamma_{\nund}\cap \bigcap_{\ell\in Q} \{x_\ell=q_\ell\} \to \Gamma_{\mund}$  such that $X_{\nund,A}=X_{\mund,B}\circ \psi$ on the attractor $\attr{\nund,A}$.

In other words, the attractor $\attr{\nund,A}$ lives on a lower dimension polymatrix replicator of type $\mund$.
\end{corollary}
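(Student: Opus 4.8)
The plan is to obtain $(\mund,B)$ and $\psi$ by iterating the one-strategy reduction of Definition~\ref{nl,Al} and Proposition~\ref{Reduced_Poly_1}, peeling the strategies of $Q$ off one at a time; the only real work is to check that the hypotheses of Proposition~\ref{Reduced_Poly_1} survive each reduction. Observe first that $\attr{\nund,A}$ is invariant under the flow of $X_{\nund,A}$, so, being contained in $\bigcap_{\ell\in Q}\{x_\ell=q_\ell\}$, it forces $X_{\nund,A}$ to be tangent to each hyperplane $\{x_\ell=q_\ell\}$ with $\ell\in Q$ at all of its points; equivalently $X^{\ell}_{\nund,A}$ vanishes on $\attr{\nund,A}$ for every $\ell\in Q$. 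Thus at every point of the attractor the hypothesis of Proposition~\ref{Reduced_Poly_1} is met for any $\ell\in Q$.

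Enumerate $Q=\{\ell_1,\dots,\ell_m\}$ and define recursively $(\nund_0,A_0):=(\nund,A)$, $q^{(0)}:=q$, $\Psi_0:=\id$, and, once $(\nund_{k-1},A_{k-1})$, an interior point $q^{(k-1)}\in\inter(\Gamma_{\nund_{k-1}})$ and an identification $\Psi_{k-1}$ are given, let $(\nund_k,A_k)$ be the $(q^{(k-1)},\ell_k)$-reduction of $(\nund_{k-1},A_{k-1})$, let $\psi_{\ell_k}$ be its natural identification, and set $q^{(k)}:=\psi_{\ell_k}(q^{(k-1)})$, $\Psi_k:=\psi_{\ell_k}\circ\Psi_{k-1}$. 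I would prove by induction on $k$ the three properties: (i) $q^{(k)}$ is an interior equilibrium of $X_{\nund_k,A_k}$; (ii) $\Psi_k$ identifies $\Gamma_{\nund}\cap\bigcap_{i\le k}\{x_{\ell_i}=q_{\ell_i}\}$ with $\Gamma_{\nund_k}$, sends $\attr{\nund,A}$ into $\bigcap_{i>k}\{y_{\ell_i}=q^{(k)}_{\ell_i}\}$, and $X^{\ell_i}_{\nund_k,A_k}$ vanishes on $\Psi_k(\attr{\nund,A})$ for every $i>k$; (iii) for all $x\in\attr{\nund,A}$ and all strategies $j\notin\{\ell_1,\dots,\ell_k\}$ one has $X^j_{\nund,A}(x)=X^j_{\nund_k,A_k}(\Psi_k(x))$ under the successive identifications.

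For the inductive step one applies Proposition~\ref{Reduced_Poly_1} to $(\nund_{k-1},A_{k-1})$, its interior equilibrium $q^{(k-1)}$ and the strategy $\ell_k$: for $x\in\attr{\nund,A}$, the point $\Psi_{k-1}(x)$ lies on $\{y_{\ell_k}=q^{(k-1)}_{\ell_k}\}$ by property (ii) at step $k-1$, and $X_{\nund_{k-1},A_{k-1}}$ is tangent there because, by property (iii) at step $k-1$, $X^{\ell_k}_{\nund_{k-1},A_{k-1}}(\Psi_{k-1}(x))=X^{\ell_k}_{\nund,A}(x)=0$; Proposition~\ref{Reduced_Poly_1} then gives $X^j_{\nund_{k-1},A_{k-1}}(\Psi_{k-1}(x))=X^j_{\nund_k,A_k}(\Psi_k(x))$ for every $j\neq\ell_k$, and composing with property (iii) at step $k-1$ yields property (iii) at step $k$. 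Evaluating this at $x=q$, where every component of $X_{\nund,A}$ vanishes, gives (i); evaluating it at $j=\ell_i$ with $i>k$ gives the vanishing clause of (ii), while the hyperplane inclusion in (ii) holds because $\psi_{\ell_k}$ carries $\{x_{\ell_i}=q^{(k-1)}_{\ell_i}\}$ onto $\{y_{\ell_i}=q^{(k)}_{\ell_i}\}$ (it acts as the identity on the $\ell_i$-coordinate when $\ell_i$ lies outside the group of $\ell_k$, and as a positive rescaling when it lies in it). Each step removes exactly one strategy from one group, so after $m$ steps the group $\alpha$ has $n_\alpha-\abs{\alpha\cap Q}=\abs{\alpha\setminus Q}$ strategies; putting $(\mund,B):=(\nund_m,A_m)$ and $\psi:=\Psi_m$, property (iii) at step $m$, together with the fact that the $\ell$-components ($\ell\in Q$) of $X_{\nund,A}$ and of $X_{\mund,B}$ both vanish on the attractor, is precisely the assertion that the two vector fields correspond under $\psi$ at every point of $\attr{\nund,A}$, i.e. $X_{\nund,A}=X_{\mund,B}\circ\psi$ there. (If some group $\alpha$ is entirely contained in $Q$, the corresponding factor of $\Gamma_{\mund}$ degenerates to a point carrying no dynamics and may simply be deleted, as in Remark~\ref{group of cardinal 2}; keeping each game type $\nund_k$ legitimate only requires deleting such a frozen group rather than formally removing its last strategy.)

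The substantive content is entirely contained in Proposition~\ref{Reduced_Poly_1}; the main obstacle is the bookkeeping of the iteration, namely verifying that after each reduction the image of $q$ is still an interior equilibrium and the image of the attractor still lies inside the hyperplanes cut out by the strategies of $Q$ not yet removed, with the reduced field tangent to them, so that Proposition~\ref{Reduced_Poly_1} can be reapplied. Both facts fall out of property (iii) of the induction hypothesis, applied respectively at the equilibrium $q$ and at the surviving indices $\ell_i$, which is exactly why properties (i), (ii) and (iii) are carried together through the induction.
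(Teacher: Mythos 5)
Your proposal is correct and follows exactly the paper's route: the paper's entire proof is ``Apply Proposition~\ref{Reduced_Poly_1} repeatedly,'' and your induction (tangency of $X_{\nund,A}$ to each $\{x_\ell=q_\ell\}$ on the flow-invariant attractor, persistence of the interior equilibrium, and the count $m_\alpha=\abs{\alpha\setminus Q}$) is just the careful bookkeeping the paper leaves implicit.
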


\begin{proof}
Apply Proposition~\ref{Reduced_Poly_1} repeatedly.
\end{proof}

\begin{lemma}\label{diag property}
Given a polymatrix game $(\nund,A)$ and a diagonal matrix $D$ of type $\nund$, we have
$$ (A\,D)_v = A_v\, D_v\;, $$
where    $A_v$ is given in Definition~\ref{def Av} and
$D_v$ is the submatrix of $D$ indexed in $\Vscr_v = \{\, i\in \{1,\ldots, n\}\,\colon\, v_i=0\,\}$.
\end{lemma}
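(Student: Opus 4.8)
The plan is to prove the identity by a direct entrywise computation; the only substantive point is that ``$D$ of type $\nund$'' means the diagonal entries of $D$ are constant on each group, which is precisely what is needed to factor $D$ through the pairing that defines $A_v$. Recall from Definition~\ref{def Av} that, under the identification $\Vscr_v\equiv\{\,i\in\{1,\ldots,n\}\,:\,v_i=0\,\}$, the entry of $A_v$ in the row labelled $i$ and column labelled $k$ (i.e.\ in position $((i,i_\alpha),(k,k_\beta))$) is $A_{(i,j),(k,\ell)}=a_{ik}+a_{j\ell}-a_{i\ell}-a_{jk}$, where $j=i_\alpha$ and $\ell=k_\beta$, and that $D_v$ is the diagonal matrix with diagonal entry $d_i$ in the slot labelled $i$, for $i$ ranging over $\{\,i:v_i=0\,\}$. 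I would first record the elementary remark that if $(k,\ell)\in\Vscr_v$ then $k$ and $\ell$ lie in the same group, so that $d_k=d_\ell$ because $D$ is of type $\nund$.

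Next I would compute the left-hand side, reading $(A\,D)_{(i,j),(k,\ell)}$ as the $((i,j),(k,\ell))$-entry of the matrix $(A\,D)_v$ built from $A\,D$ by Definition~\ref{def Av}. Since $D$ is diagonal, $(A\,D)_{rs}=a_{rs}\,d_s$, so for $(i,j),(k,\ell)\in\Vscr_v$,
\begin{align*}
(A\,D)_{(i,j),(k,\ell)} &= (A\,D)_{ik}+(A\,D)_{j\ell}-(A\,D)_{i\ell}-(A\,D)_{jk}\\
&= a_{ik}\,d_k+a_{j\ell}\,d_\ell-a_{i\ell}\,d_\ell-a_{jk}\,d_k\\
&= d_k\,(a_{ik}+a_{j\ell}-a_{i\ell}-a_{jk})=A_{(i,j),(k,\ell)}\,d_k,
\end{align*}
where the third equality uses $d_k=d_\ell$. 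On the other hand, since $D_v$ is diagonal with diagonal entry $d_k$ in the slot labelled $k$, matrix multiplication gives $(A_v\,D_v)_{(i,j),(k,\ell)}=(A_v)_{(i,j),(k,\ell)}\,d_k=A_{(i,j),(k,\ell)}\,d_k$. Comparing the two computations entry by entry yields $(A\,D)_v=A_v\,D_v$.

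There is no genuine obstacle here: the argument is pure bookkeeping, and the one thing that has to be stated carefully is the index identification $\Vscr_v\equiv\{\,i:v_i=0\,\}$ together with the observation that membership $(k,\ell)\in\Vscr_v$ forces $d_k=d_\ell$, which is exactly the content of ``$D$ of type $\nund$''. I would also note in passing, although it is not needed in the sequel, that the symmetric computation, factoring out $d_i=d_j$ for $(i,j)\in\Vscr_v$ instead, gives $(D\,A)_v=D_v\,A_v$.
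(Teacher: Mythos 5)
Your proof is correct and follows essentially the same route as the paper's: both reduce the claim to the entrywise identity $(A\,D)_{(i,j),(k,\ell)}=a_{ik}d_k+a_{j\ell}d_\ell-a_{i\ell}d_\ell-a_{jk}d_k$ and then factor out $d_k$ using the observation that $(k,\ell)\in\Vscr_v$ places $k$ and $\ell$ in the same group, so $d_k=d_\ell$ because $D$ is of type $\nund$. Nothing is missing.
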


\begin{proof}
Given indices $i,k\in \Vscr_v$, take $j$, resp. $\ell$,
in the group of $i$, resp. $k$, such that $v_j=v_\ell=1$.

Since $D$ is of type $\nund$ we have $d_k=d_\ell$.
By Definition~\ref{def Av},  
\begin{align*}
((A\,D)_v)_{ik} = (A\,D)_{(i,j),(k,\ell)} & =
a_{ik} d_k + a_{j\ell} d_\ell - a_{i\ell} d_\ell - a_{jk} d_k\\
 & = ( a_{ik}  + a_{j\ell}   - a_{i\ell}   - a_{jk}  )\, d_k\\
 &= A_{(i,j),(k,\ell)}\,d_k = (A_v \,D_v)_{ik}\;.
\end{align*}
\end{proof}

\begin{lemma}\label{Reduced_Poly_2}
Let $(\nund,A)$ be an admissible polymatrix game and $D$ a  diagonal matrix as in Definition~\ref{defi:dissipative_poly}. Given $v\in V_{\nund,A}^*$ such that
$v_\ell=0$ and $a_{ll}^v<0$ for some $\ell\in\alpha$ with $\alpha\in\{1,\dots,p\}$, there exists a positive diagonal matrix $\check{D}$ of type $\nund(\ell)$ such that
$(A(\ell)\check{D})_{\check{v}}$ is the submatrix of $(AD)_v$ obtained eliminating
row and column $\ell$. Moreover
\begin{itemize}
	\item[(a)] $(\nund(\ell),A(\ell))$ is admissible, and;
	\item[(b)] $\check{v}\in V_{\nund(\ell),A(\ell)}^*$.
\end{itemize}
\end{lemma}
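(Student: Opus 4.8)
The plan is to reduce everything to Lemma~\ref{diag property} combined with Lemma~\ref{lemma 3.1.5. masterthesis} (stable dissipativeness passes to submatrices), and to track how the reduction $(\nund,A)\mapsto(\nund(\ell),A(\ell))$ interacts with passing to the $A_v$ matrices. First I would set up notation: fix the group $\alpha$ containing $\ell$, and note that since $v_\ell=0$, the vertex $v$ of $\Gamma_\nund$ projects (via deleting the $\ell$-th coordinate) to a well-defined vertex $\check v$ of $\Gamma_{\nund(\ell)}$; concretely $\check v=e_{i_1}+\cdots+e_{i_p}$ with the same $i_\beta$ as for $v$ in every group (the $i_\beta$ are strategies $\neq\ell$, so they survive the deletion). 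Then $\Vscr_{\check v}$ is naturally identified with $\Vscr_v\setminus\{\ell\}$. The heart of the matter is the claim: for all $(i,j),(k,m)\in\Vscr_v$ with $i,k\neq\ell$, one has
\begin{equation}\nonumber
A(\ell)_{(i,j'),(k,m')} = (1-q_\ell)\,A_{(i,j),(k,m)}\;,
\end{equation}
where $j',m'$ are the ``$v_{\cdot}=1$'' partners within $\nund(\ell)$ (which equal $j,m$ unless $j$ or $m$ was $\ell$ — but $j,m$ are the distinguished indices $i_\beta$, hence $\neq\ell$, so $j'=j$, $m'=m$). This would be verified by a direct computation plugging the formula~\eqref{entries of A(l)} (in the normalized form $a_{\ell\cdot}=0$, which we may assume by Proposition~\ref{equivalent matrices prop} as in the proof of Proposition~\ref{Reduced_Poly_1}) into the definition of $A(\ell)_{(i,j),(k,m)}=a_{ik}(\ell)+a_{jm}(\ell)-a_{im}(\ell)-a_{jk}(\ell)$, splitting according to whether the second index lies in $\check\alpha$ or not, and watching the cross terms $a_{i\ell},a_{j\ell}$ cancel in pairs and the surviving terms assemble into $(1-q_\ell)(a_{ik}+a_{jm}-a_{im}-a_{jk})$.

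Granting that identity, the matrix $(A(\ell))_{\check v}$ equals $(1-q_\ell)$ times the submatrix of $A_v$ obtained by deleting row and column $\ell$. Now bring in the diagonal matrix $D=\diag(d_i)$ of type $\nund$ with $Q_{AD}\le 0$ on $H_\nund$. Define $\check D$ to be the diagonal matrix of type $\nund(\ell)$ whose entry for strategy $i$ is $d_i$ if $i\notin\check\alpha$ and $\tfrac{d_i}{1-q_\ell}$ if $i\in\check\alpha$ — this is still of type $\nund(\ell)$ since all strategies in $\check\alpha$ get the same scaling, and it is positive. By Lemma~\ref{diag property}, $(A(\ell)\check D)_{\check v}=(A(\ell))_{\check v}\,(\check D)_{\check v}$, and combining with the identity above and again Lemma~\ref{diag property} applied to $(\nund,A)$ (giving $(AD)_v=A_v D_v$), the factor $(1-q_\ell)$ is exactly absorbed by the rescaling on the $\check\alpha$-entries while the off-$\check\alpha$ entries match on the nose; a short index check shows $(A(\ell)\check D)_{\check v}$ is precisely the submatrix of $(AD)_v$ with row and column $\ell$ removed. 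This proves the main assertion of the lemma.

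For (b): by Lemma~\ref{A sd and D positive implies AD sd}, since $A_v$ is stably dissipative (as $v\in V_{\nund,A}^*$), so is $A_v D_v=(AD)_v$; its submatrix obtained by deleting row/column $\ell$ is stably dissipative by Lemma~\ref{lemma 3.1.5. masterthesis}; that submatrix is $(A(\ell)\check D)_{\check v}$, and then $(A(\ell))_{\check v}=(A(\ell)\check D)_{\check v}(\check D)_{\check v}^{-1}$ is stably dissipative by Lemma~\ref{A sd and D positive implies AD sd} once more. Hence $\check v\in V_{\nund(\ell),A(\ell)}^*$. For (a), admissibility of $(\nund(\ell),A(\ell))$ requires, besides the existence of a stably dissipative $(A(\ell))_w$ for some vertex $w$ — which we just exhibited with $w=\check v$ — that $(\nund(\ell),A(\ell))$ be dissipative: it has the formal equilibrium $\check q=\psi_\ell(q)$ (indeed $q_\ell$ was eliminated and the remaining coordinates still satisfy the simplex constraints after the $\tfrac{1}{1-q_\ell}$ rescaling on $\check\alpha$, and the equilibrium conditions $(A(\ell)\check q)_i=(A(\ell)\check q)_j$ follow from Proposition~\ref{Reduced_Poly_1} since $X_{\nund(\ell),A(\ell)}^j(\check q)=X_{\nund,A}^j(q)=0$), and $Q_{(A(\ell))_{\check v}}\le 0$ on the relevant space follows from $Q_{(AD)_v}\le 0$ (equivalently $Q_{AD}\le 0$ on $H_\nund$, via Proposition~\ref{DQA rule}) by restriction to the coordinate hyperplane $\{x_\ell=q_\ell\}$, together with Remark~\ref{rmk AD D-1A} to move the diagonal factor across.

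The step I expect to be the main obstacle is the bookkeeping in the core identity $A(\ell)_{(i,j),(k,m)}=(1-q_\ell)A_{(i,j),(k,m)}$: one must be careful that the distinguished partners $j,m$ really are $\neq\ell$ (so no degenerate case arises) and that the $j\in\check\alpha$ versus $j\notin\alpha$ case split in~\eqref{entries of A(l)} is applied to the correct index in each of the four terms $a_{ik}(\ell),a_{jm}(\ell),a_{im}(\ell),a_{jk}(\ell)$; the $a_{\ell\cdot}=0$ normalization must be invoked to discard the terms that would otherwise spoil the clean factorization, and one should double-check this normalization is compatible with the one used to derive~\eqref{entries of A(l)} in its stated form. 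Everything else is routine linear algebra once this identity and Lemma~\ref{diag property} are in hand.
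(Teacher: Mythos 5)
Your approach is essentially the paper's: normalize row $\ell$ to zero via Proposition~\ref{equivalent matrices prop}, compute $A(\ell)_{\check v}$ entrywise from~\eqref{entries of A(l)}, absorb the resulting factor by a diagonal rescaling of the $\check\alpha$-block, and then chain Lemmas~\ref{diag property}, \ref{lemma 3.1.5. masterthesis} and~\ref{A sd and D positive implies AD sd} exactly as the paper does. One local correction: your displayed identity $A(\ell)_{(i,j'),(k,m')}=(1-q_\ell)\,A_{(i,j),(k,m)}$ is not uniform --- since the case split in~\eqref{entries of A(l)} is on the \emph{column} index, the factor $(1-q_\ell)$ appears only when the pair $(k,m)$ lies in the group $\alpha$, while for $(k,m)$ outside $\alpha$ the entries agree with no factor; your own choice of $\check D$ and your remark that the off-$\check\alpha$ entries ``match on the nose'' show you have the correct picture, so only the displayed formula needs the case split. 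Your inclusion of the $d_i$'s in $\check D$ (the paper's stated $\check D$ uses identity blocks, which only yields the submatrix of $A_v$ rather than of $(AD)_v$ as literally claimed) and your explicit check that the reduced game is itself dissipative for part (a) are details the paper's proof leaves implicit, and are worth keeping.
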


\begin{proof}
By Proposition~\ref{equivalent matrices prop}, we can assume $A=(a_{ij})$ has all entries equal to zero in row $\ell$, i.e., $a_{lj}=0$ for all $j$.

Since $(\nund,A)$ is admissible and $v\in V_{\nund,A}^\ast$, $(AD)_v$ is stably dissipative.

Consider the set
 $I=\{i\in\{1,\dots,n\}\,:\, v_i=0 \; \text{ and } \; a^v_{ii}=0\,\}$.
By Proposition~\ref{sd:char},  the submatrix $B_v=(a^v_{ij}\,d_j)_{i,j\in I}$ of $(AD)_v=A_v D_v$  is skew-symmetric.

Let $\Gamma_{\nund(\ell)}$ be the polytope corresponding to the new polymatrix replicator in lower dimension, given by Proposition~\ref{Reduced_Poly_1} and defined by matrix $A(\ell)=\left(a_{ij}(\ell)\right)_{i,j\neq \ell}$.

Observing that $v_i=0$ for all strategies $i$ of the matrix $(AD)_v$, we can choose the vertex $\check{v}$ in the
polytope $\Gamma_{\nund(\ell)}$ determined by the exact same strategies as $v$.
Notice that $v_\ell=0$ for the removed strategy $\ell$.

As   in the proof of Proposition~\ref{Reduced_Poly_1}  the
matrix $A(\ell)$ is defined by
\begin{equation}\nonumber
a_{ij}(\ell)=\left \{ \begin{array}{ll}
a_{ij}  & \text{ if } \; j\notin\check{\alpha}\\
a_{ij}(1-q_\ell)+a_{il}q_\ell  & \text{ if } \; j\in\check{\alpha} \;.
\end{array} \right.
\end{equation}
Hence
\begin{equation}\nonumber
a_{ij}^{\check{v}}(\ell)=\left \{ \begin{array}{ll}
a_{ij}^v  & \text{ if } \; j\notin\check{\alpha}\\
(1-q_\ell)a_{ij}^v  & \text{ if } \; j\in\check{\alpha} \;,
\end{array} \right.
\end{equation}
where $a_{ij}^{\check{v}}(\ell)\equiv (a_{ij}(\ell))^{\check{v}}$ are the entries
of matrix $A(\ell)_{\check{v}}$.

Considering the positive diagonal matrix
$$\check{D}=\textrm{diag}\left(I_1,\dots ,\frac{1}{1-q_\ell}I_\alpha,\dots ,I_p\right)\,,$$
we have that $(A(\ell)\check{D})_{\check{v}}$ is the submatrix $B_v$ of
$(AD)_v$ obtained by removing the row and column corresponding to strategy $\ell$.
By Lemma~\ref{diag property}, $(A(\ell)\check{D})_{\check{v}} = A(\ell)_{\check{v}}\, \check{D}_{\check{v}}$.
Hence, by Lemma~\ref{lemma 3.1.5. masterthesis}, $A(\ell)_{\check{v}}\, \check{D}_{\check{v}}$
is stably dissipative, and consequently, by
Lemma~\ref{A sd and D positive implies AD sd}, $A(\ell)_{\check{v}}$ is also
stably dissipative.
\end{proof}

Proposition~\ref{Reduced_Poly_1} and Lemma~\ref{Reduced_Poly_2} allows us to generalize~\cite[Theorem 4.5]{DFO1998}  about the Hamiltonian nature
of the limit dynamics in {\em admissible} polymatrix replicators.

\begin{theorem}\label{theor:Reduced_Poly_Conserv}
Consider a polymatrix replicator~\eqref{ode:pmg}  on $\Gamma_{\nund}$,
and assume that the system is admissible and has an equilibrium
$q\in\inter\left(\Gamma_{\nund}\right)$.
Then the limit dynamics of~\eqref{ode:pmg} on the attractor
$\attr{\nund,A}$ is described by a Hamiltonian polymatrix replicator in some lower dimensional prism $\Gamma_{\nund'}$.
\end{theorem}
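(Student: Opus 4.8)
The plan is to run the reduction algorithm just far enough to freeze, at their equilibrium values, all strategies carrying a negative self-interaction coefficient, and then to recognize the resulting lower-dimensional polymatrix replicator as a \emph{conservative} one, which by Proposition~\ref{Conserv_Poly} is Hamiltonian.

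First I would fix a vertex $v\in V_{\nund,A}^*$, which exists by admissibility, and put $Q:=\{\,i\in\Vscr_v\colon a_{ii}^v<0\,\}$. Since $(\nund,A)$ is dissipative, choose a positive diagonal matrix $D$ of type $\nund$ with $Q_{AD}\le 0$ on $H_\nund$; then all $a_{ii}^v\le 0$, and by Rule~\ref{Rule_a} together with the propositions on strategy types preceding Proposition~\ref{prop poly reduced}, every $i\in Q$ is of type $\bullet$, i.e. $\attr{\nund,A}\subseteq\{x_i=q_i\}$. Hence $\attr{\nund,A}\subseteq\bigcap_{\ell\in Q}\{x_\ell=q_\ell\}$, and the Corollary following Remark~\ref{group of cardinal 2} yields a polymatrix game $(\mund,B)$ with $m_\alpha=\abs{\alpha\setminus Q}$ and an identification $\psi$ of $\Gamma_\nund\cap\bigcap_{\ell\in Q}\{x_\ell=q_\ell\}$ with $\Gamma_\mund$ such that $X_{\nund,A}=X_{\mund,B}\circ\psi$ on $\attr{\nund,A}$. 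Applying Lemma~\ref{Reduced_Poly_2} once per strategy of $Q$, deleted in any order (the hypothesis $a^{\check v}_{\ell\ell}<0$ survives each step because the successive reduced matrices are submatrices of $A_vD_v$, hence have unchanged diagonal signs), shows that $(\mund,B)$ is again admissible, that the vertex $\check v$ obtained from $v$ by dropping the strategies of $Q$ lies in $V^*_{\mund,B}$, and that $(B\check D)_{\check v}$ is precisely the submatrix of $A_vD_v$ indexed by $I:=\Vscr_v\setminus Q=\{\,i\in\Vscr_v\colon a^v_{ii}=0\,\}$, where $\check D$ is the positive diagonal matrix of type $\mund$ assembled from the factors $\tfrac1{1-q_\ell}$ as in the proof of Lemma~\ref{Reduced_Poly_2}.

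Next I would invoke Proposition~\ref{sd:char} for the stably dissipative matrix $A_v$: the submatrix $(A_vD_v)_I=(a^v_{ij}d_j)_{i,j\in I}$ is skew-symmetric, exactly the computation appearing in the proof of Lemma~\ref{Reduced_Poly_2}. Thus $(B\check D)_{\check v}$ is skew-symmetric, so by Proposition~\ref{DQA rule} --- which identifies $(B\check D)_{\check v}$ with the matrix of the quadratic form $Q_{B\check D}$ on $H_\mund$ in the basis $\Bscr_{\check v}$ --- we get $Q_{B\check D}\equiv 0$ on $H_\mund$. Moreover $q\in\attr{\nund,A}$ is an equilibrium of $X_{\nund,A}$, so $\check q:=\psi(q)\in\inter(\Gamma_\mund)$ is an equilibrium of $X_{\mund,B}$, hence a formal equilibrium by Proposition~\ref{prop int equilibria}. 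Therefore $(\mund,B)$ is conservative in the sense of Definition~\ref{def cons 1}, and Proposition~\ref{Conserv_Poly} makes $X_{\mund,B}$ Hamiltonian for a stratified Poisson structure on $\Gamma_\mund$, with Hamiltonian $h(x)=-\sum_i (\check q_i/\check d_i)\log x_i$. Since $X_{\nund,A}=X_{\mund,B}\circ\psi$ on $\attr{\nund,A}$, the limit dynamics of~\eqref{ode:pmg} is conjugate, via $\psi$, to the restriction of this Hamiltonian polymatrix replicator; taking $\nund'=\mund$ finishes the proof. (If $Q=\varnothing$ the reduction is trivial, but then all $a^v_{ii}=0$ and the same skew-symmetry argument applied to $A_v$ itself shows $(\nund,A)$ is already conservative, so one takes $\nund'=\nund$.)

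The step I expect to be the real work is the bookkeeping in the reduction above: one must check that after each deletion the hypotheses $\check v\in V^*$, $\check v_\ell=0$ and $a^{\check v}_{\ell\ell}<0$ remain valid for the next strategy of $Q$, that the rescaling factors accumulate into a single positive diagonal matrix of type $\mund$, and --- crucially --- that $(B\check D)_{\check v}$ is \emph{literally} the $I\times I$ submatrix of $A_vD_v$ and not merely something conjugate to it, since only then does the skew-symmetry of $(A_vD_v)_I$ transfer to $(B\check D)_{\check v}$.
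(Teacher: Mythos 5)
Your proposal is correct and follows essentially the same route as the paper: repeatedly apply the $(q,\ell)$-reduction (Proposition~\ref{Reduced_Poly_1} together with Lemma~\ref{Reduced_Poly_2}) to eliminate exactly the strategies with $a^v_{ii}<0$, observe that the reduced matrix is the $I\times I$ submatrix of $A_vD_v$ with $I=\{i:a^v_{ii}=0\}$, which is skew-symmetric by Proposition~\ref{sd:char}, and conclude conservativity and hence Hamiltonianity via Definition~\ref{def cons 1} and Proposition~\ref{Conserv_Poly}. Your version is in fact somewhat more explicit than the paper's on two points it leaves implicit --- that the attractor lies in $\bigcap_{\ell\in Q}\{x_\ell=q_\ell\}$ (needed to invoke the reduction on the attractor) and that the diagonal signs are preserved across successive reductions so the iteration terminates with all remaining diagonal entries zero.
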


\begin{proof}
By definition there exists a vertex $v\in\Gamma_{\nund}$ such that $A_v=(a_{ij}^v)$ is stably dissipative. Applying Proposition~\ref{Reduced_Poly_1} and Lemma~\ref{Reduced_Poly_2} we obtain a new polymatrix replicator in lower dimension that is admissible.

We can iterate this process until the corresponding vertex $\check{v}$ in the polytope  is such that,
$a_{ii}^{\check{v}}=0$ for all $i$ with $\check{v}_i=0$.

Let us denote  the resulting polymatrix game by $(\rund,A')$.
By Proposition~\ref{sd:char}, for some positive diagonal matrix
$D'$ of type $\rund$,  $(A'\,D')_{\check{v}}$ is skew-symmetric.
Hence $Q_{A'\,D'}=0$ on $H_{\rund}$, and by Definition~\ref{def cons 1} the polymatrix game $(\rund, A')$ is conservative. Notice that this polymatrix game has essentially the same formal equilibrium up to coordinate rescalings.
Thus by Proposition~\ref{Conserv_Poly} the vector field
$X_{\rund,A'}$ is Hamiltonian.
\end{proof}

%%%%%%%%%%%%%%%%%%%%%%%%%%%%%%%%%%%%%%%%%%%%%%%%%%%%%%%%%%%%%%%%%%%%%%%%%%%%%%%%%%%%
%%%%%%%%%%%%%%%%%%%%%%%%%%%%%%%%%%%%%%%%%%%%%%%%%%%%%%%%%%%%%%%%%%%%%%%%%%%%%%%%%%%%

\section{An Example}
\label{applications}

Consider the polymatrix replicator system associated to 
the polymatrix game  $\Gcal=\left((3,2),A\right)$, where
$$A=\left[\begin{array}{ccccc}
			-1 & 8 & -7 & 3 & -3 \\
			-10 & -1 & 11 & 3 & -3 \\
			11 & -7 & -4 & -6 & 6 \\
			-3 & -3 & 6 & 0 & 0 \\
			3 & 3 & -6 & 0 & 0 \\
			  \end{array}\right]\,.$$

We denote by $X_\Gcal$ the vector field associated to this polymatrix replicator defined on the popytope $\Gamma_{(3,2)}=\Delta^2\times\Delta^1\,.$

\begin{figure}[h]
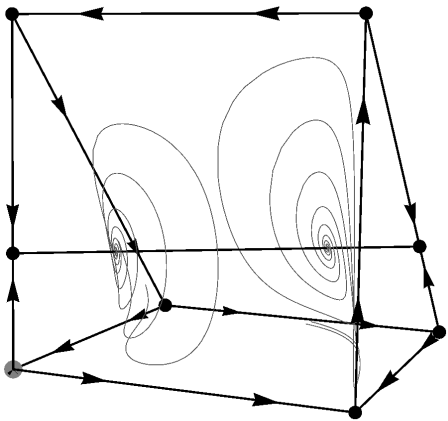

\begin{center}
\includefigure{width=6cm}{Dynamics_1}
\caption{\footnotesize{Four orbits in two different leafs of the polymatrix game $\Gcal$.
}}\label{two_leafs}
\end{center} 
\end{figure}

In this example we want to illustrate the reduction algorithm on the set of graphs
$ \{\,G(A_v) \,:\, v\in V_{(3,2)}\, \} $ to derive information on the strategies
of the polymatrix game $\Gcal$ as described in section~\ref{dissipative}.
We will see that this polymatrix game is admissible and 
verify the validity of the conclusion of Theorem~\ref{theor:Reduced_Poly_Conserv} for this example.

\begin{table}[h]
\centering
\begin{tabular}[c]{cccccc}
\\
\hline
\\[-3mm]
$v_1=(1,4)$ & $v_2=(1,5)$ & $v_3=(2,4)$ & $v_4=(2,5)$ & $v_5=(3,4)$ & $v_6=(3,5)$ \vspace{1mm} \\
\hline
\end{tabular}
\caption{\footnotesize{Vertex labels.}} \label{ex:vert_0}
\end{table}

In this game the strategies are divided in two groups, $\{1,2,3\}$ and $\{4,5\}$.
The vertices of the  phase space  $\Gamma_{(3,2)}$
will be designated by pairs in
$\{1,2,3\}\times\{4,5\}$, where the label $(i,j)$ stands for the point
$e_i + e_j\in \Gamma_{(3,2)}$.
To simplify the notation we designate the prism vertices by the letters $v_1,\ldots, v_6$
according to table~\ref{ex:vert_0}.

\begin{table}[t]
\centering
\begin{tabular}{ccc}
Vertex & $A_v$  & $G(A_v)$ \\
\hline \hline \\[-2mm]
\,$v_1\in V_{\nund,A}^*$  &
$\begin{bmatrix} 0 & 27 & 0 \\ -27 & -9 & 18 \\ 0 & -18 & 0 \end{bmatrix}$  &
\includefigure{width=4cm}{Graph_1,4} \\[8mm]
\,$v_2\in V_{\nund,A}^*$     &
$\begin{bmatrix} 0 & 27 & 0 \\ -27 & -9 & -18 \\ 0 & 18 & 0 \end{bmatrix}$  &
\includefigure{width=4cm}{Graph_1,5} \\[8mm]
\,$v_3\in V_{\nund,A}^*$     &
$\begin{bmatrix} 0 & -27 & 0 \\ 27 & -9 & 18 \\ 0 & -18 & 0 \end{bmatrix}$  &
\includefigure{width=4cm}{Graph_2,4} \\[8mm]
\,$v_4\in V_{\nund,A}^*$     &
$\begin{bmatrix} 0 & -27 & 0 \\ 27 & -9 & -18 \\ 0 & 18 & 0 \end{bmatrix}$  &
\includefigure{width=4cm}{Graph_2,5} \\[8mm]
 &  &
\multirow{3}{*}{\includefigure{width=2.7cm}{Graph_3,4}}  \\
\,$v_5\notin V_{\nund,A}^*$ & $\begin{bmatrix} -9 & 18 & -18 \\ -36 & -9 & -18 \\ 18 & 18 & 0 \end{bmatrix}$ & \\
&  &  \\[4mm]
 &  &
\multirow{3}{*}{\includefigure{width=2.7cm}{Graph_3,5}}
\\
\,$v_6\notin V_{\nund,A}^*$  &
$\begin{bmatrix} -9 & 18 & 18 \\ -36 & -9 & 18 \\ -18 & -18 & 0 \end{bmatrix}$ & \\
&  &  \\[1mm]
\hline \hline
\end{tabular}
\caption{\footnotesize{Matrix $A_v$ and its graph $G(A_v)$ for each vertex $v$.}}
\label{ex:A_v_matrices_graphs}
\end{table}

\begin{table}[h] 
\centering
\begin{tabular}{ccccccc}
Step & Rule & Vertex & Strategy & Group 1 & & Group 2 \\
\hline \hline \\[1mm]
$1$ & $1$ & $v_1, v_2, v_3, v_4$ & $3$ &
\includefigure{width=2.2cm}{Graph_Inf_Set_1_2} &\,&
\includefigure{width=1.3cm}{Graph_Inf_Set_2_2} \\[4mm]
\hline\\[1mm]
$2$ & $4$ & $v_4$ (or $v_5$) & $4, 5$ &
\includefigure{width=2.2cm}{Graph_Inf_Set_1_2} &\,&
\includefigure{width=1.3cm}{Graph_Inf_Set_4_2} \\[4mm]
\hline\\[1mm]
$3$ & $6$ & $-$ & $4, 5$ &
\includefigure{width=2.2cm}{Graph_Inf_Set_1_2} &\,&
\includefigure{width=1.3cm}{Graph_Inf_Set_6_2} \\[4mm]
\hline\\[1mm]
$4$ & $3$ & $v_1, v_2$ & $1, 2$ &
\includefigure{width=2.2cm}{Graph_Inf_Set_5_2} &\,&
\includefigure{width=1.3cm}{Graph_Inf_Set_6_2} \\[4mm]
\hline \hline
\end{tabular}
\caption{\footnotesize{Information set of all strategies (by group) of $\Gcal$,
where for each step, we mention the rule, the vertex (or vertices) and the strategy (or strategies) to which we apply the rule.}}
\label{ex:Information_set}
\end{table}

\newpage

% formal equilibrium
The point $q\in\inter\left(\Gamma_{(3,2)}\right)$ given by
$$ q= \left( \frac{1}{3}, \frac{1}{3}, \frac{1}{3}, \frac{1}{2}, \frac{1}{2}\right)\,,$$
is an equilibrium of our polymatrix replicator $X_\Gcal$. In particular it is also a formal equilibrium of $\Gcal$ (see Definition~\ref{defi:formal_equilibrium}).

%dissipative
The quadratic form $Q_A:H_{(3,2)}\to\Rr$ induced by matrix $A$ is
$$ Q_A(x)=- 9\,x_3^2\,, $$
where $x=(x_1,x_2,x_3,x_4,x_5)\in H_{(3,2)}$.
By Definition~\ref{defi:dissipative_poly}, $\Gcal$ is dissipative.

In table~\ref{ex:A_v_matrices_graphs} we present for each vertex $v$ in the prism the corresponding matrix $A_v$ and graph $G(A_v)$.

%Admissible
Considering vertex $v_1=(1,4)$ for instance, by Proposition~\ref{sd:char},
we have that matrix $A_{v_1}$ is stably dissipative.
Hence, by Definition~\ref{defi:admissible_poly}, $\Gcal$ is admissible and
$v_1\in V_{\nund,A}^*$.

Table~\ref{ex:Information_set} represents the steps of the reduction procedure applied to $\Gcal$. Let us describe it step by step:
\begin{itemize}\setlength\itemsep{-1mm}
	\item[(Step 1)] Initially, considering the vertices $v_1, v_2, v_3$ and $v_4$ we apply
{\em rule~\ref{Rule_a}} to the corresponding graphs $G(A_{v_1}), G(A_{v_2}), G(A_{v_3})$ and
$G(A_{v_4})$, and we colour in black $(\bullet)$ strategy $3$. We obtain the graphs
depicted in column ``Step 1'' in table~\ref{table:reduced_graphs};
	\item[(Step 2)] In this step we can consider vertex $v_4$ (or $v_5$) to apply
{\em rule~\ref{Rule_d}}. Hence, we put a link between strategies $4$ and $5$ in group $2$;
	\item[(Step 3)] In this step we apply {\em rule~\ref{Rule_f}} to strategies $4$ and $5$, and we colour with $\oplus$ that strategies. We obtain the graphs
depicted in column ``Step 3'' in table~\ref{table:reduced_graphs};
	\item[(Step 4)] Finally, we apply {\em rule~\ref{Rule_c}} to vertices $v_2$ and $v_3$ in the corresponding graphs of the column ``Step 3'' in table~\ref{table:reduced_graphs}, and we colour with $\oplus$ the strategy $2$. Analogously  we apply {\em rule~\ref{Rule_c}} to vertices $v_1$ and $v_3$ in the corresponding graphs of the column ``Step 3'' in
table~\ref{table:reduced_graphs}, and we colour with $\oplus$ the strategy $1$.
We obtain the graphs depicted in column ``Step 4'' table~\ref{table:reduced_graphs}.
\end{itemize}

\begin{table}[t] 
\centering
\begin{tabular}{cccc}
Vertex & Step 1 & Step 3 & Step 4  \\
\hline \hline \\[-2mm]
\includefigure{width=0.4cm}{Vertex_1,4}  &
\includefigure{width=3cm}{Graph_1,4_R1} &
\includefigure{width=3cm}{Graph_1,4_R2} &
\includefigure{width=3cm}{Graph_1,4_R3} \\[5mm]

\includefigure{width=0.4cm}{Vertex_1,5}  &
\includefigure{width=3cm}{Graph_1,5_R1} &
\includefigure{width=3cm}{Graph_1,5_R2} &
\includefigure{width=3cm}{Graph_1,5_R3} \\[5mm]

\includefigure{width=0.4cm}{Vertex_2,4}  &
\includefigure{width=3cm}{Graph_2,4_R1} &
\includefigure{width=3cm}{Graph_2,4_R2} &
\includefigure{width=3cm}{Graph_2,4_R3} \\[5mm]

\includefigure{width=0.4cm}{Vertex_2,5}  &
\includefigure{width=3cm}{Graph_2,5_R1} &
\includefigure{width=3cm}{Graph_2,5_R2} &
\includefigure{width=3cm}{Graph_2,5_R3} \\[5mm]

\includefigure{width=0.4cm}{Vertex_3,4}  &
\includefigure{width=2.3cm}{Graph_3,4} &
\includefigure{width=2.3cm}{Graph_3,4_R2} &
\includefigure{width=2.3cm}{Graph_3,4_R3} \\[5mm]

\includefigure{width=0.4cm}{Vertex_3,5}  &
\includefigure{width=2.3cm}{Graph_3,5} &
\includefigure{width=2.3cm}{Graph_3,5_R2} &
\includefigure{width=2.3cm}{Graph_3,5_R3} \\[5mm]

\hline \hline
\end{tabular}
\caption{\footnotesize{The graphs obtained in each step of the reduction algorithm
for $\Gcal$.}}
\label{table:reduced_graphs}
\end{table}

Since $\Gcal$ is admissible and
has an equilibrium $q\in \inter\left(\Gamma_{(3,2)}\right)$, by
Theorem~\ref{theor:Reduced_Poly_Conserv} we have that its limit dynamics on the attractor
$\attr{\Gcal}$ is described by a Hamiltonian polymatrix replicator in a lower dimensional prism.
Considering the strategy $3$ in group $1$, by Definition~\ref{nl,Al} we obtain the 
$(q,3)$-reduction $((2,2),A(3))$ where $\tilde{A}:=A(3)$ is the matrix
$$\tilde{A}=\left[\begin{array}{ccccc}
		    -9 & 9  & 9  & -9 \\
			-9 & 9  & 9  & -9 \\
			-6 & 6  & 6  & -6 \\
			-6 & 6  & 6  & -6 \\
			  \end{array}\right]\,.$$

Consider now the polymatrix replicator associated to the game
\linebreak $\tilde{\Gcal}=\left((2,2),\tilde{A}\right)$,
which is equivalent to the trivial game
$\left((2,2),0\right)$. Hence its replicator dynamics on the polytope
$\Gamma_{(2,2)}=\Delta^1\times\Delta^1$ is trivial, in the sense that all points are equilibria.
In particular the associated vector field 
 $X_{\tilde{\Gcal}}=0$ is Hamiltonian.
 
Since the reduced  information set $\Rscr(\Gcal)$ is of type
$\{\bullet,\oplus\}$, by Proposition~\ref{prop poly reduced} the flow of $X_\Gcal$ admits an invariant foliation with a single globally attractive equilibrium on each leaf (see Figure~\ref{two_leafs}).
Therefore, the attractor $\Lambda_{\Gcal}$ is just a line segment of equilibria, which embeds in the Hamitonian flow of 
 $X_{\tilde{\Gcal}}=0$, 
as asserted by Proposition~\ref{Reduced_Poly_1}.

%%%%%%%%%%%%%%%%%%%%%%%%%%%%%%%%%%%%%%
\section*{Acknowledgments}

The first author was supported by IMPA through a  p\'os-doutorado de excel\^encia position. 
The second author was supported by  Funda\c{c}\~{a}o para a Ci\^{e}ncia e a Tecnologia,  UID/MAT/04561/2013.
The third author was supported by FCT scholarship  SFRH/BD/72755/2010.

%%%%%%%%%%%%%%%%%%%%%%%%%%%%%%%%%%%%%%%%%%%%%%%%%%%%%%%%%%%%%%%%%%%%%%%%%%%%

\nocite{*}
\bibliographystyle{amsplain} 

% \bib, bibdiv, biblist are defined by the amsrefs package.
\begin{bibdiv}
\begin{biblist}

\bib{ADP2015II}{article}{
      author={Alishah, H.~N.},
      author={Duarte, P.},
      author={Peixe, T.},
       title={Asymptotic poincaré maps along the edges of polytopes ii,
  hamiltonian systems},
     journal={In preparation},
}

\bib{ADP2015}{article}{
      author={Alishah, H.~N.},
      author={Duarte, P.},
      author={Peixe, T.},
       title={Asymptotic poincaré maps along the edges of polytopes},
        date={http://arxiv.org/pdf/1411.6227v1.pdf},
     journal={Submitted},
}

\bib{AD2015}{article}{
      author={Alishah, Hassan~Najafi},
      author={Duarte, Pedro},
       title={Hamiltonian evolutionary games},
        date={2015},
        ISSN={2164-6066},
     journal={Journal of Dynamics and Games},
      volume={2},
      number={1},
       pages={33\ndash 49},
  url={http://aimsciences.org/journals/displayArticlesnew.jsp?paperID=11237},
}

\bib{Br1994}{article}{
      author={Brannath, W},
       title={Heteroclinic networks on the tetrahedron},
        date={1994},
     journal={Nonlinearity},
      volume={7},
      number={5},
       pages={1367},
         url={http://stacks.iop.org/0951-7715/7/i=5/a=006},
}

\bib{Brenig1988}{article}{
      author={Brenig, L.},
       title={Complete factorisation and analytic solutions of generalized
  {L}otka-{V}olterra equations},
        date={1988},
        ISSN={0375-9601},
     journal={Phys. Lett. A},
      volume={133},
      number={7-8},
       pages={378\ndash 382},
         url={http://dx.doi.org/10.1016/0375-9601(88)90920-6},
      review={\MR{970739 (89j:92050)}},
}

\bib{BW2012}{article}{
      author={Bunimovich, L.~A.},
      author={Webb, B.~Z.},
       title={Isospectral compression and other useful isospectral
  transformations of dynamical networks},
        date={2012},
     journal={Chaos: An Interdisciplinary Journal of Nonlinear Science},
      volume={22},
      number={3},
       pages={\ndash },
  url={http://scitation.aip.org/content/aip/journal/chaos/22/3/10.1063/1.4739253},
}

\bib{BW2014}{book}{
      author={Bunimovich, L.~A.},
      author={Webb, B.~Z.},
       title={Isospectral transformations},
      series={Springer Monographs in Mathematics},
   publisher={Springer},
        date={2014},
}

\bib{Ch1995}{article}{
      author={Chawanya, Tsuyoshi},
       title={A new type of irregular motion in a class of game dynamics
  systems},
        date={1995},
        ISSN={0033-068X},
     journal={Progr. Theoret. Phys.},
      volume={94},
      number={2},
       pages={163\ndash 179},
         url={http://dx.doi.org/10.1143/PTP.94.163},
      review={\MR{1354590 (96f:58142)}},
}

\bib{Ch1996}{article}{
      author={Chawanya, Tsuyoshi},
       title={Infinitely many attractors in game dynamics system},
        date={1996},
        ISSN={0033-068X},
     journal={Progr. Theoret. Phys.},
      volume={95},
      number={3},
       pages={679\ndash 684},
         url={http://dx.doi.org/10.1143/PTP.95.679},
      review={\MR{1388249}},
}

\bib{Dua2011}{incollection}{
      author={Duarte, Pedro},
       title={Hamiltonian systems on polyhedra},
        date={2011},
   booktitle={Dynamics, games and science. {II}},
      series={Springer Proc. Math.},
      volume={2},
   publisher={Springer, Heidelberg},
       pages={257\ndash 274},
         url={http://dx.doi.org/10.1007/978-3-642-14788-3_21},
      review={\MR{2883285}},
}

\bib{DFO1998}{article}{
      author={Duarte, Pedro},
      author={Fernandes, Rui~L.},
      author={Oliva, Waldyr~M.},
       title={Dynamics of the attractor in the {L}otka-{V}olterra equations},
        date={1998},
        ISSN={0022-0396},
     journal={J. Differential Equations},
      volume={149},
      number={1},
       pages={143\ndash 189},
         url={http://dx.doi.org/10.1006/jdeq.1998.3443},
      review={\MR{1643678 (99h:34075)}},
}

\bib{DP2012}{article}{
      author={Duarte, Pedro},
      author={Peixe, Telmo},
       title={Rank of stably dissipative graphs},
        date={2012},
        ISSN={0024-3795},
     journal={Linear Algebra Appl.},
      volume={437},
      number={10},
       pages={2573\ndash 2586},
         url={http://dx.doi.org/10.1016/j.laa.2012.06.015},
      review={\MR{2964708}},
}

\bib{Eld2013}{book}{
      author={Eldering, Jaap},
       title={Normally hyperbolic invariant manifolds},
      series={Atlantis Studies in Dynamical Systems},
   publisher={Atlantis Press, Paris},
        date={2013},
      volume={2},
        ISBN={978-94-6239-002-7; 978-94-6239-003-4},
         url={http://dx.doi.org/10.2991/978-94-6239-003-4},
        note={The noncompact case},
      review={\MR{3098498}},
}

\bib{GZW1995}{article}{
      author={Guo, Zhi~Ming},
      author={Zhou, Zhi~Ming},
      author={Wang, Shou~Song},
       title={Volterra multipliers of {$3\times 3$} real matrices},
        date={1995},
        ISSN={1000-0984},
     journal={Math. Practice Theory},
      number={1},
       pages={47\ndash 54},
      review={\MR{1341776 (96e:34084)}},
}

\bib{BF1997}{article}{
      author={Hern{\'a}ndez-Bermejo, Benito},
      author={Fair{\'e}n, V{\'{\i}}ctor},
       title={Lotka-{V}olterra representation of general nonlinear systems},
        date={1997},
        ISSN={0025-5564},
     journal={Math. Biosci.},
      volume={140},
      number={1},
       pages={1\ndash 32},
         url={http://dx.doi.org/10.1016/S0025-5564(96)00131-9},
      review={\MR{1434400 (98i:92042)}},
}

\bib{HerFair1977}{article}{
      author={Hern{\'a}ndez-Bermejo, Benito},
      author={Fair{\'e}n, V{\'{\i}}ctor},
       title={Lotka-{V}olterra representation of general nonlinear systems},
        date={1997},
        ISSN={0025-5564},
     journal={Math. Biosci.},
      volume={140},
      number={1},
       pages={1\ndash 32},
         url={http://dx.doi.org/10.1016/S0025-5564(96)00131-9},
      review={\MR{1434400 (98i:92042)}},
}

\bib{HPS1977}{book}{
      author={Hirsch, M.~W.},
      author={Pugh, C.~C.},
      author={Shub, M.},
       title={Invariant manifolds},
      series={Lecture Notes in Mathematics, Vol. 583},
   publisher={Springer-Verlag, Berlin-New York},
        date={1977},
      review={\MR{0501173 (58 \#18595)}},
}

\bib{Hir1982}{article}{
      author={Hirsch, Morris~W.},
       title={Systems of differential equations which are competitive or
  cooperative. {I}. {L}imit sets},
        date={1982},
        ISSN={0036-1410},
     journal={SIAM J. Math. Anal.},
      volume={13},
      number={2},
       pages={167\ndash 179},
         url={http://dx.doi.org/10.1137/0513013},
      review={\MR{647119 (83i:58081)}},
}

\bib{Hir1985}{article}{
      author={Hirsch, Morris~W.},
       title={Systems of differential equations that are competitive or
  cooperative. {II}. {C}onvergence almost everywhere},
        date={1985},
        ISSN={0036-1410},
     journal={SIAM J. Math. Anal.},
      volume={16},
      number={3},
       pages={423\ndash 439},
         url={http://dx.doi.org/10.1137/0516030},
      review={\MR{783970 (87a:58137)}},
}

\bib{Hir1988}{article}{
      author={Hirsch, Morris~W.},
       title={Systems of differential equations which are competitive or
  cooperative. {III}. {C}ompeting species},
        date={1988},
        ISSN={0951-7715},
     journal={Nonlinearity},
      volume={1},
      number={1},
       pages={51\ndash 71},
         url={http://stacks.iop.org/0951-7715/1/51},
      review={\MR{928948 (90d:58070)}},
}

\bib{HS1994}{article}{
      author={Hofbauer, J.},
      author={So, J. W.-H.},
       title={Multiple limit cycles for three-dimensional {L}otka-{V}olterra
  equations},
        date={1994},
        ISSN={0893-9659},
     journal={Appl. Math. Lett.},
      volume={7},
      number={6},
       pages={65\ndash 70},
         url={http://dx.doi.org/10.1016/0893-9659(94)90095-7},
      review={\MR{1340732 (96g:34063)}},
}

\bib{Hof1981}{article}{
      author={Hofbauer, Josef},
       title={On the occurrence of limit cycles in the {V}olterra-{L}otka
  equation},
        date={1981},
        ISSN={0362-546X},
     journal={Nonlinear Anal.},
      volume={5},
      number={9},
       pages={1003\ndash 1007},
         url={http://dx.doi.org/10.1016/0362-546X(81)90059-6},
      review={\MR{633014 (83c:92063)}},
}

\bib{Hof1987}{inproceedings}{
      author={Hofbauer, Josef},
       title={Heteroclinic cycles on the simplex},
        date={1987},
   booktitle={Proceedings of the {E}leventh {I}nternational {C}onference on
  {N}onlinear {O}scillations ({B}udapest, 1987)},
   publisher={J\'anos Bolyai Math. Soc., Budapest},
       pages={828\ndash 831},
      review={\MR{933673 (89e:58099)}},
}

\bib{Hof1994}{article}{
      author={Hofbauer, Josef},
       title={Heteroclinic cycles in ecological differential equations},
        date={1994},
        ISSN={1210-3195},
     journal={Tatra Mt. Math. Publ.},
      volume={4},
       pages={105\ndash 116},
        note={Equadiff 8 (Bratislava, 1993)},
      review={\MR{1298459 (95i:34083)}},
}

\bib{HS1998}{book}{
      author={Hofbauer, Josef},
      author={Sigmund, Karl},
       title={Evolutionary games and population dynamics},
   publisher={Cambridge University Press},
     address={Cambridge},
        date={1998},
}

\bib{H1971/72}{article}{
      author={Howson, Joseph~T., Jr.},
       title={Equilibria of polymatrix games},
        date={1971/72},
        ISSN={0025-1909},
     journal={Management Sci.},
      volume={18},
       pages={312\ndash 318},
      review={\MR{0392000 (52 \#12818)}},
}

\bib{Jan1987}{article}{
      author={Jansen, Wolfgang},
       title={A permanence theorem for replicator and {L}otka-{V}olterra
  systems},
        date={1987},
        ISSN={0303-6812},
     journal={J. Math. Biol.},
      volume={25},
      number={4},
       pages={411\ndash 422},
         url={http://dx.doi.org/10.1007/BF00277165},
      review={\MR{908382 (89a:92044)}},
}

\bib{KG1988}{article}{
      author={Karakostas, George},
      author={Gy{\H{o}}ri, Istv{\'a}n},
       title={Global stability in job systems},
        date={1988},
        ISSN={0022-247X},
     journal={J. Math. Anal. Appl.},
      volume={131},
      number={1},
       pages={85\ndash 96},
         url={http://dx.doi.org/10.1016/0022-247X(88)90191-6},
      review={\MR{934432 (89e:34083)}},
}

\bib{KS1994}{article}{
      author={Kirk, Vivien},
      author={Silber, Mary},
       title={A competition between heteroclinic cycles},
        date={1994},
        ISSN={0951-7715},
     journal={Nonlinearity},
      volume={7},
      number={6},
       pages={1605\ndash 1621},
         url={http://stacks.iop.org/0951-7715/7/1605},
      review={\MR{1304441 (95j:58121)}},
}

\bib{LaS1968}{article}{
      author={LaSalle, J.~P.},
       title={Stability theory for ordinary differential equations},
        date={1968},
        ISSN={0022-0396},
     journal={J. Differential Equations},
      volume={4},
       pages={57\ndash 65},
      review={\MR{0222402 (36 \#5454)}},
}

\bib{Lot1958}{book}{
      author={Lotka, Alfred~J.},
       title={Elements of mathematical biology. (formerly published under the
  title {E}lements of {P}hysical {B}iology)},
   publisher={Dover Publications, Inc., New York, N. Y.},
        date={1958},
      review={\MR{0094263 (20 \#782)}},
}

\bib{MP1973}{article}{
      author={Maynard~Smith, J.},
      author={Price, G.},
       title={The logic of animal conflicts.},
        date={1973},
     journal={Nature},
      number={246},
}

\bib{Palm}{article}{
      author={Palm, G{\"u}nther},
       title={Evolutionary stable strategies and game dynamics for {$n$}-person
  games},
        date={1984},
        ISSN={0303-6812},
     journal={J. Math. Biol.},
      volume={19},
      number={3},
       pages={329\ndash 334},
         url={http://dx.doi.org/10.1007/BF00277103},
      review={\MR{754948 (85m:92010)}},
}

\bib{Plank3}{inproceedings}{
      author={Plank, Manfred},
       title={Some qualitative differences between the replicator dynamics of
  two player and {$n$} player games},
        date={1997},
   booktitle={Proceedings of the {S}econd {W}orld {C}ongress of {N}onlinear
  {A}nalysts, {P}art 3 ({A}thens, 1996)},
      volume={30},
       pages={1411\ndash 1417},
         url={http://dx.doi.org/10.1016/S0362-546X(97)00202-2},
      review={\MR{1490064}},
}

\bib{Qt1989}{article}{
      author={Quintas, L.~G.},
       title={A note on polymatrix games},
        date={1989},
        ISSN={0020-7276},
     journal={Internat. J. Game Theory},
      volume={18},
      number={3},
       pages={261\ndash 272},
         url={http://dx.doi.org/10.1007/BF01254291},
      review={\MR{1024957 (91a:90188)}},
}

\bib{Red1985}{article}{
      author={Redheffer, Ray},
       title={Volterra multipliers. {I}, {II}},
        date={1985},
        ISSN={0196-5212},
     journal={SIAM J. Algebraic Discrete Methods},
      volume={6},
      number={4},
       pages={592\ndash 611, 612\ndash 623},
         url={http://dx.doi.org/10.1137/0606059},
      review={\MR{800991 (87j:15037a)}},
}

\bib{Red1989}{article}{
      author={Redheffer, Ray},
       title={A new class of {V}olterra differential equations for which the
  solutions are globally asymptotically stable},
        date={1989},
        ISSN={0022-0396},
     journal={J. Differential Equations},
      volume={82},
      number={2},
       pages={251\ndash 268},
         url={http://dx.doi.org/10.1016/0022-0396(89)90133-2},
      review={\MR{1027969 (91f:34058)}},
}

\bib{RW1984}{article}{
      author={Redheffer, Ray},
      author={Walter, Wolfgang},
       title={Solution of the stability problem for a class of generalized
  {V}olterra prey-predator systems},
        date={1984},
        ISSN={0022-0396},
     journal={J. Differential Equations},
      volume={52},
      number={2},
       pages={245\ndash 263},
         url={http://dx.doi.org/10.1016/0022-0396(84)90179-7},
      review={\MR{741270 (85k:92068)}},
}

\bib{RZ1981}{article}{
      author={Redheffer, Ray},
      author={Zhou, Zhi~Ming},
       title={Global asymptotic stability for a class of many-variable
  {V}olterra prey-predator systems},
        date={1981},
        ISSN={0362-546X},
     journal={Nonlinear Anal.},
      volume={5},
      number={12},
       pages={1309\ndash 1329},
         url={http://dx.doi.org/10.1016/0362-546X(81)90108-5},
      review={\MR{646217 (83h:92074)}},
}

\bib{RZ1982}{article}{
      author={Redheffer, Ray},
      author={Zhou, Zhi~Ming},
       title={A class of matrices connected with {V}olterra prey-predator
  equations},
        date={1982},
        ISSN={0196-5212},
     journal={SIAM J. Algebraic Discrete Methods},
      volume={3},
      number={1},
       pages={122\ndash 134},
         url={http://dx.doi.org/10.1137/0603012},
      review={\MR{644963 (83m:15020)}},
}

\bib{KRJW}{article}{
      author={Ritzberger, Klaus},
      author={Weibull, J{\"o}rgen~W.},
       title={Evolutionary selection in normal-form games},
        date={1995},
        ISSN={0012-9682},
     journal={Econometrica},
      volume={63},
      number={6},
       pages={1371\ndash 1399},
         url={http://dx.doi.org/10.2307/2171774},
      review={\MR{1361238 (96h:90147)}},
}

\bib{RGF2003}{article}{
      author={Rocha~Filho, Tarc{\'{\i}}sio~M.},
      author={Gl{\'e}ria, Iram~M.},
      author={Figueiredo, Annibal},
       title={A novel approach for the stability problem in non-linear
  dynamical systems},
        date={2003},
        ISSN={0010-4655},
     journal={Comput. Phys. Comm.},
      volume={155},
      number={1},
       pages={21\ndash 30},
         url={http://dx.doi.org/10.1016/S0010-4655(03)00295-9},
      review={\MR{2001700 (2004g:34089)}},
}

\bib{SS1981}{article}{
      author={Schuster, Peter},
      author={Sigmund, Karl},
       title={Coyness, philandering and stable strategies},
        date={1981},
        ISSN={0003-3472},
     journal={Animal Behaviour},
      volume={29},
      number={1},
       pages={186 \ndash  192},
  url={http://www.sciencedirect.com/science/article/pii/S0003347281801650},
}

\bib{SSHW1981}{article}{
      author={Schuster, Peter},
      author={Sigmund, Karl},
      author={Hofbauer, Josef},
      author={Wolff, Robert},
       title={Self-regulation of behaviour in animal societies. {II}. {G}ames
  between two populations without self-interaction},
        date={1981},
        ISSN={0340-1200},
     journal={Biol. Cybernet.},
      volume={40},
      number={1},
       pages={9\ndash 15},
         url={http://dx.doi.org/10.1007/BF00326676},
      review={\MR{609926 (82e:92039b)}},
}

\bib{Shub1987}{book}{
      author={Shub, Michael},
       title={Global stability of dynamical systems},
   publisher={Springer-Verlag, New York},
        date={1987},
        ISBN={0-387-96295-6},
         url={http://dx.doi.org/10.1007/978-1-4757-1947-5},
        note={With the collaboration of Albert Fathi and R{\'e}mi Langevin,
  Translated from the French by Joseph Christy},
      review={\MR{869255 (87m:58086)}},
}

\bib{Sma1976}{article}{
      author={Smale, S.},
       title={On the differential equations of species in competition},
        date={1976},
        ISSN={0303-6812},
     journal={J. Math. Biol.},
      volume={3},
      number={1},
       pages={5\ndash 7},
      review={\MR{0406579 (53 \#10366)}},
}

\bib{Smi1986}{article}{
      author={Smith, Hal~L.},
       title={On the asymptotic behavior of a class of deterministic models of
  cooperating species},
        date={1986},
        ISSN={0036-1399},
     journal={SIAM J. Appl. Math.},
      volume={46},
      number={3},
       pages={368\ndash 375},
         url={http://dx.doi.org/10.1137/0146025},
      review={\MR{841454 (87j:34066)}},
}

\bib{TJ1978}{article}{
      author={Taylor, Peter~D.},
      author={Jonker, Leo~B.},
       title={Evolutionarily stable strategies and game dynamics},
        date={1978},
        ISSN={0025-5564},
     journal={Math. Biosci.},
      volume={40},
      number={1-2},
       pages={145\ndash 156},
      review={\MR{0489983 (58 \#9351)}},
}

\bib{DZ1998}{article}{
      author={van~den Driessche, P.},
      author={Zeeman, M.~L.},
       title={Three-dimensional competitive {L}otka-{V}olterra systems with no
  periodic orbits},
        date={1998},
        ISSN={0036-1399},
     journal={SIAM J. Appl. Math.},
      volume={58},
      number={1},
       pages={227\ndash 234},
         url={http://dx.doi.org/10.1137/S0036139995294767},
      review={\MR{1610080 (99g:92026)}},
}

\bib{Volt1990}{book}{
      author={Volterra, Vito},
       title={Le\c cons sur la th\'eorie math\'ematique de la lutte pour la
  vie},
      series={Les Grands Classiques Gauthier-Villars. [Gauthier-Villars Great
  Classics]},
   publisher={\'Editions Jacques Gabay, Sceaux},
        date={1990},
        ISBN={2-87647-066-7},
        note={Reprint of the 1931 original},
      review={\MR{1189803 (93k:92011)}},
}

\bib{Ya1968}{article}{
      author={Yanovskaya, E.~B.},
       title={Equilibrium points in polymatrix games (in russian)},
        date={1968},
     journal={Latvian Mathematical Collection},
}

\bib{Zee1993}{article}{
      author={Zeeman, M.~L.},
       title={Hopf bifurcations in competitive three-dimensional
  {L}otka-{V}olterra systems},
        date={1993},
        ISSN={0268-1110},
     journal={Dynam. Stability Systems},
      volume={8},
      number={3},
       pages={189\ndash 217},
         url={http://dx.doi.org/10.1080/02681119308806158},
      review={\MR{1246002 (94j:34044)}},
}

\bib{Zee1995}{article}{
      author={Zeeman, M.~L.},
       title={Extinction in competitive {L}otka-{V}olterra systems},
        date={1995},
        ISSN={0002-9939},
     journal={Proc. Amer. Math. Soc.},
      volume={123},
      number={1},
       pages={87\ndash 96},
         url={http://dx.doi.org/10.2307/2160613},
      review={\MR{1264833 (95c:92019)}},
}

\bib{ZL2010}{article}{
      author={Zhao, Xiaohua},
      author={Luo, Jigui},
       title={Classification and dynamics of stably dissipative lotka-volterra
  systems},
        date={2010},
        ISSN={0020-7462},
     journal={International Journal of Non-Linear Mechanics},
      volume={45},
      number={6},
       pages={603 \ndash  607},
  url={http://www.sciencedirect.com/science/article/pii/S0020746209001346},
        note={Nonlinear Dynamics of Biological Systems},
}

\end{biblist}
\end{bibdiv}

\end{document}